\definecolor{darkdgmcolor}{rgb}{0.0, 0.0, 0.8}
\definecolor{darkgreen}{rgb}{0.0, 0.8, 0.0}
\definecolor{purple}{RGB}{153,50,204}
\definecolor{dgmcolor}{RGB}{255,20,147}
\newcommand{\e}{\varepsilon}
\newcommand{\R}{\mathbb{R}}
\newcommand{\X}{\mathbb{X}}
\newcommand{\Y}{\mathbb{Y}}
\newcommand{\Z}{\mathbb{Z}}
\newcommand{\bbS}{\mathbb{S}}
\newcommand{\T}{\mathbb{T}}
\newcommand{\N}{\mathbb{N}}
\newcommand{\CC}{\mathcal{C}}
\newcommand{\DD}{\mathcal{D}}
\newcommand{\caB}{\mathcal{B}}
\newcommand{\Zfunc}{\mathbf{Z}}
\newcommand{\Ffunc}{\mathbf{F}}
\newcommand{\Gfunc}{\mathbf{G}}
\newcommand{\Jfunc}{\mathbf{J}}
\newcommand{\Hfunc}{\mathbf{H}}
\newcommand{\Xfunc}{\mathbf{X}}
\newcommand{\Vfunc}{\mathbf{V}}
\newcommand{\Wfunc}{\mathbf{W}}
\newcommand{\Yfunc}{\mathbf{Y}}
\newcommand{\Rfunc}{\mathbf{R}}
\newcommand{\im}{\mathbf{Im}}
\newcommand{\VR}{\mathbf{VR}}
\newcommand{\diam}{\operatorname{diam}}
\newcommand{\rank}{\operatorname{rank}}
\newcommand{\len}{\mathbf{len}}
\newcommand{\cupprod}{\mathbf{cup}}
\newcommand{\pcupprod}{\mathbf{cup}}
\newcommand{\di}{d_{\mathrm{I}}}
\newcommand{\bsigma}{\boldsymbol{\sigma}}
\newcommand{\btau}{\boldsymbol{\tau}}
\newcommand{\udim}{k}
\newcommand{\card}{\operatorname{card}}
\newcommand{\Vect}{\mathbf{Vec}}
\newcommand{\Ring}{\mathbf{GRing}}
\newcommand{\Alg}{\mathbf{GAlg}}
\newcommand{\Top}{\mathbf{Top}}
\newcommand{\Int}{\mathbf{Int}}
\newcommand{\ob}{\mathbf{Ob}}
\newcommand{\rk}{\mathbf{rank}}
\newcommand{\Coordinate}[2]%
{ \coordinate (#1) at (#2);
}
\title{Persistent cup-length} %TODO Please add
\author{Marco Contessoto}{Department of Mathematics, S\~ao Paulo State University -  UNESP, Brazil \and \url{http://www.myhomepage.edu} }{marco.contessoto@unesp.br}{ }{MC was supported by FAPESP through grants 2016/24707-4, 2017/25675-1 and 2019/22023-9.}%TODO mandatory, please use full name; only 1 author per \author macro; first two parameters are mandatory, other parameters can be empty. Please provide at least the name of the affiliation and the country. The full address is optional. Use additional curly braces to indicate the correct name splitting when the last name consists of multiple name parts.
\author{Facundo M\'emoli}{Department of Mathematics and Department of Computer Science and Engineering, The Ohio State University, US \and \url{http://facundo-memoli.org/} }{memoli@math.osu.edu}{ }{FM was partially supported by the NSF through grants RI-1901360, CCF-1740761, and CCF-1526513, and DMS-1723003.}
\author{Anastasios Stefanou}{Department of Mathematics and Computer Science, University of Bremen, Germany \and \url{https://sites.google.com/view/anastasiostefanou} }{stefanou@uni-bremen.de}{ }{AS was supported by NSF through grants CCF-1740761, DMS-1440386, RI-1901360, and the Dioscuri program initiated by the Max Planck Society, jointly managed with the National Science Centre (Poland), and mutually funded by the
Polish Ministry of Science and Higher Education and the German Federal
Ministry of Education and Research.}
\author{Ling Zhou}{Department of Mathematics, The Ohio State University, US \and \url{https://sites.google.com/view/lingzhou-math/home} }{zhou.2568@osu.edu}{https://orcid.org/0000-0001-6655-5162}{LZ was partially supported by the NSF through grants RI-1901360, CCF-1740761, and CCF-1526513, and DMS-1723003.}
\authorrunning{M. Contessoto, F. M\'emoli, A. Stefanou and L. Zhou}
\keywords{cohomology, cup product, persistence, cup length, Gromov-Hausdorff distance} %TODO mandatory; please add comma-separated list of keywords
\begin{document}

%\nocite{*}

\nolinenumbers

\maketitle

%TODO mandatory: add short abstract of the document
\begin{abstract}
Cohomological ideas have recently been injected into persistent homology and have for example  been used for accelerating the calculation of persistence diagrams by the software Ripser.

The cup product operation which is available at cohomology level gives rise to a graded ring structure that extends the usual vector space structure and is therefore able to extract and encode additional rich information. The maximum number of cocycles having non-zero cup product yields an invariant, the cup-length, which is useful for discriminating spaces.

In this paper, we lift the cup-length into the persistent cup-length function for the purpose of capturing ring-theoretic information about the evolution of the cohomology (ring) structure across a filtration. We show that the persistent cup-length function can be computed from a family of representative cocycles and devise a polynomial time algorithm for its computation. We furthermore show that this invariant is stable under suitable interleaving-type distances. 
%\sout{Along the way, we identify an invariant that we call the persistent cup-length diagram, which is stronger than persistent cup-length function but can still be computed efficiently. }
\end{abstract}

\section{Introduction}

%\subparagraph*{Persistent Homology in TDA}
%(one paragraph about general stuff TDA etc)
%In algebraic topology,  the \textit{homology} of topological spaces with field coefficients is computed via techniques from \textit{linear algebra}.

\textit{Persistent Homology} \cite{frosini1990distance,frosini1992measuring,robins1999towards,zomorodian2005computing,cohen2007Stability,edelsbrunner2008Persistent,carlsson2009topology,carlsson2020persistent}, one of the main techniques in \textit{Topological Data Analysis (TDA)},  studies the evolution of homology classes across a filtration. This produces a collection of birth-death pairs which is called the \textit{barcode} or \textit{persistence diagram} of the filtration.

%\subparagraph*{Cohomology in TDA.} 
In the case of \textit{cohomology}, which is dual to that of homology, one studies linear maps from the vector space of simplicial chains into the field $K$, known as \textit{cochains}. 
Cochains are naturally endowed with a product operation, called the \textit{cup product}, which induces a bilinear operation on cohomology and is denoted by $\smile:\Hfunc^p(\X)\times\Hfunc^q(\X)\to \Hfunc^{p+q}(\X)$ for a space $\X$ and dimensions $p,q\geq 0$.
With the cup product operation, the collection of cohomology vector spaces can be given the structure of a \textit{graded ring}, called the \textit{cohomology ring}; see \cite[\textsection~48 and \textsection~68]{munkres1984elements} and \cite[Ch. 3, \textsection 3.D]{hatcher2000}. 
This makes cohomology a richer structure than homology. 

\textit{Persistent cohomology} has been studied in \cite{de2011dualities,de2011persistent,dlotko2014simplification,bauer2021ripser,kang2021evaluating},
without exploiting the ring structure induced by the cup product. Works which do attempt to exploit this ring structure include \cite{gonzalez2003hha,kaczynski2010computing} in the static case %\anastasios{(level)} 
and 
\cite{huang2005cup,yarmola2010persistence,aubrey2011persistent,lupo2018persistence,herscovich2018higher,belchi2021a,polanco2021} at the persistent level.

In this paper, we continue this line of work and tackle the question of quantifying the \emph{evolution} of the cup product structure across a filtration through introducing a \emph{polynomial-time computable} invariant which is induced from the \emph{cup-length}: the maximal number of cocycles (in dimensions 1 and above) having non-zero cup product. We call this invariant the \textit{persistent cup-length function}, and identify a tool - the \textit{persistent cup-length diagram (associated to a family of representative cocycles $\bsigma$ of the barcode)} to compute it. (see Fig.~\ref{fig:klein-function-diagram}).

\begin{figure}[H]
\hspace{-1em}
    \begin{tikzcd}[column sep=0pt,row sep=0pt]
\begin{tikzpicture}[scale=0.25]
    \node[fill=red,circle,inner sep=.4mm] at (1.4,5)  {};
\end{tikzpicture}
	& 
	\begin{tikzpicture}[scale=0.25]
	   \filldraw[color=red!40, fill=none, thick](2.5,5) ellipse (1.2 and 0.5);
    \node[fill=red,circle,inner sep=.4mm] at (1.4,5)  {};
\end{tikzpicture}
	& 
\begin{tikzpicture}[scale=0.17]
	\filldraw[color=red!40, fill=none, thick](2.7,9) ellipse (1 and 0.5);
    \node[fill=red,circle,inner sep=.4mm] at (1.8,9)  {};
    \Coordinate{innerr}{4.5,6} %middle point, do not change
    \Coordinate{innerl}{2.5,5}
    \Coordinate{intersectionr}{4.2,7} %middle point, do not change
    \Coordinate{intersectionl}{2.1,6} %middle point, do not change
    \Coordinate{bottoml}{2.2,3.3} %bottom point
    \Coordinate{bottomr}{7,4} %bottom point
    \Coordinate{left}{2,2} %left bottom boundary
    \Coordinate{right}{9.9,4.2} %right bottom boundary
        \draw (left) to[out=150,in=210,looseness=1.3] (intersectionl); 
        \draw[dashed] (intersectionl) to[out=30,in=200,looseness=0.6] (intersectionr);
    
        %control inner height (clock-wise)
        \draw (innerr) to[out=120,in=250,looseness=1] (4,10) to[out=70,in=180,looseness=1] (5.2,11) to[out=360,in=90,looseness=1] (7.5,9) to[out=270,in=20,looseness=1] (intersectionr);
        
        %control outer height
        \draw  (innerl) to[out=110,in=80,looseness=4] (right);
        
        \draw (innerl) to[out=315,in=315,looseness=0.5] (innerr); %eplise solid
        \draw[dashed] (innerl) to[out=135,in=135,looseness=0.5] node[below right,pos=0.5,black] {} (innerr); %eplise dash
        \draw (left) to[out=330,in=260,looseness=1.1] (right);
        \draw[dashed] (innerl) to[out=300,in=60,looseness=1] (bottoml);
        \draw[dashed] (innerr) to[out=300,in=150,looseness=1] (bottomr);
        \draw[dashed] (1.6,3) to [out=330,in=240,looseness=.5] 
        (bottoml) to [out=30,in=160,looseness=1.4] 
        (bottomr) to [out=340,in=190,looseness=.5]  (7.7,3.8) ;
\end{tikzpicture}
& 
\hspace{-1em}
\begin{tikzpicture}[scale=0.17]
	\filldraw[color=red!40, fill=red!10, thick](2.7,9) ellipse (1 and 0.5);
    \node[fill=red,circle,inner sep=.4mm] at (1.8,9)  {};
    \Coordinate{innerr}{4.5,6} %middle point, do not change
    \Coordinate{innerl}{2.5,5}
    \Coordinate{intersectionr}{4.2,7} %middle point, do not change
    \Coordinate{intersectionl}{2.1,6} %middle point, do not change
    \Coordinate{bottoml}{2.2,3.3} %bottom point
    \Coordinate{bottomr}{7,4} %bottom point
    \Coordinate{left}{2,2} %left bottom boundary
    \Coordinate{right}{9.9,4.2} %right bottom boundary
        \draw (left) to[out=150,in=210,looseness=1.3] (intersectionl); 
        \draw[dashed] (intersectionl) to[out=30,in=200,looseness=0.6] (intersectionr);
    
        %control inner height (clock-wise)
        \draw (innerr) to[out=120,in=250,looseness=1] (4,10) to[out=70,in=180,looseness=1] (5.2,11) to[out=360,in=90,looseness=1] (7.5,9) to[out=270,in=20,looseness=1] (intersectionr);
        
        %control outer height
        \draw  (innerl) to[out=110,in=80,looseness=4] (right);
        
        \draw (innerl) to[out=315,in=315,looseness=0.5] (innerr); %eplise solid
        \draw[dashed] (innerl) to[out=135,in=135,looseness=0.5] node[below right,pos=0.5,black] {} (innerr); %eplise dash
        \draw (left) to[out=330,in=260,looseness=1.1] (right);
        \draw[dashed] (innerl) to[out=300,in=60,looseness=1] (bottoml);
        \draw[dashed] (innerr) to[out=300,in=150,looseness=1] (bottomr);
        \draw[dashed] (1.6,3) to [out=330,in=240,looseness=.5] 
        (bottoml) to [out=30,in=160,looseness=1.4] 
        (bottomr) to [out=340,in=190,looseness=.5]  (7.7,3.8) ;
\end{tikzpicture}
	\\
    \text{\small{$t\in[0,1)$}}  
    & \text{\small{$t\in[1,2)$}} 
    & \text{\small{$t\in [2,3)$}} 
    & \text{\small{$t\geq 3$}} 
  \end{tikzcd}  
    \begin{tikzcd}[column sep=tiny,row sep=tiny]
    &\\
      \vspace{-1em} 
      \begin{tikzpicture}[scale=0.4]
    \begin{axis} [ 
    ticklabel style = {font=\LARGE},
    axis y line=middle, 
    axis x line=middle,
    ytick={1,2,3,4,5.3},
    yticklabels={$1$,$2$,$3$,$4$,$\infty$},
    xticklabels={$1$,$2$,$3$,$4$,$\infty$},
    xtick={1,2,3,4,5.3},
    xmin=0, xmax=5.5,
    ymin=0, ymax=5.5,]
    \addplot [mark=none] coordinates {(0,0) (5.3,5.3)};
    \addplot [thick,color=dgmcolor!20!white,fill=dgmcolor!20!white, 
                    fill opacity=0.45]coordinates {
            (1,3) 
            (1,1)
            (2,2)  
            (2,3)
            (1,3)};
    \addplot [thick,color=dgmcolor!40!white,fill=dgmcolor!40!white, 
                    fill opacity=0.45]coordinates {
            (2,5.3) 
            (2,2)
            (2,2)
            (5.3,5.3)};
    \node[mark=none] at (axis cs:1.5,2.2){\LARGE\textsf{1}};
    \node[mark=none] at (axis cs:3.5,4.5){\LARGE\textsf{2}};
    \node[mark=none] at (axis cs:4,2){\huge$\cupprod(\Xfunc)$};
    \end{axis}
    \end{tikzpicture}
    \begin{tikzpicture}[scale=0.4]
    \begin{axis} [ 
    ticklabel style = {font=\LARGE},
    axis y line=middle, 
    axis x line=middle,
    ytick={1,2,3,4,5.3},
    yticklabels={$1$,$2$,$3$,$4$,$\infty$},
    xticklabels={$1$,$2$,$3$,$4$,$\infty$},
    xtick={1,2,3,4,5.3},
    xmin=0, xmax=5.5,
    ymin=0, ymax=5.5,]
    \addplot [mark=none] coordinates {(0,0) (5.3,5.3)};
  \addplot[dgmcolor!40!white,mark=*] (1,3) circle (3pt) node[below,black]{\LARGE\textsf{1}};
    \addplot[dgmcolor!100!white,mark=*] (2,3) circle (3pt) node[below,black]{\LARGE\textsf{2}};
  \addplot[dgmcolor!100!white,mark=*] (2,5.3) circle (3pt) node[below,black]{\LARGE\textsf{2}};
    \node[mark=none] at (axis cs:4,2){\huge$\mathbf{dgm}_{\bsigma}^{\smile}(\Xfunc)$};
    \end{axis}
    \end{tikzpicture}
	\end{tikzcd}
\caption{A filtration $\Xfunc$ of the pinched Klein bottle, its persistent cup-length function $\cupprod(\Xfunc)$ (see Ex.\ref{ex:cup_l_func_klein}) and its persistent cup-length diagram $\mathbf{dgm}_{\bsigma}^{\smile}(\Xfunc)$ (see Ex.~\ref{ex:example_dgm}). 
} 
\label{fig:klein-function-diagram}
\end{figure}

\subparagraph*{Some invariants related to the cup product.} In standard topology, an \textit{invariant} is a quantity associated to a given topological space which remains invariant under a certain class of maps. This invariance helps in discovering, studying and classifying properties of spaces. Beyond \emph{Betti numbers}, examples of classical invariants are: the \emph{Lusternik-Schnirelmann category (LS-category)} of a space $\X$, defined as the minimal integer $k\geq 1$ such that there is an open cover $\{U_i\}_{i=1}^k$ of $\X$ such that each inclusion map $U_i\hookrightarrow \X$ is null-homotopic, and the \emph{cup-length invariant},  which is the maximum number of positive-dimensional cocycles having non-zero cup product.
While being relatively more informative, the LS-category is difficult to compute \cite{cornea2003lusternik}, and with rational coefficients this computation is known to be NP-hard \cite{amann_2015}. The cup-length invariant, as a lower bound of the LS-category \cite{rudyak1999analytical,Rudyak1999ONCW}, serves as a computable estimate for the LS-category.
Another well known invariant which can be estimated through the cup-length is the so-called \textit{topological complexity} \cite{SMALE198781,farber2003topological,sarin2017cup}.

% Another family of related invariants is (called) the \textit{topological complexity}. Two different versions have been explored:  \cite{farber2003topological} studies a notion of topological complexity %$\mathrm{TC}(\X)$ 
% associated to the motion planning problem whereas 
% \cite{SMALE198781,sarin2017cup}  topological complexity ideas are related to the study of algorithms. These two approaches, despite seemingly unrelated, can both be lower bounded using cup product ideas: the zero-division-cup-length of $\Hfunc^*(\X)$, the longest non-trivial product in the kernel of the homomorphism induced by cup products, is the lower bound for both notions.

\paragraph*{Our contributions.}
% (here beat the drum about the fact that we provide a generalization of cup-length to the setting of filtrations which helps in discriminating spaces, without incurring a tremendous computational overhead)
% (etc et)
Let $\Top$ denotes the category of (compactly generated weak Hausdorff) topological spaces.\footnote{We are following the convention from \cite{blumberg2017universality}.} Throughout the paper, by a (topological) space we refer to an object in $\Top$, and by a persistent space we mean a functor from the poset category $(\R,\leq )$ to $\Top$. A filtration (of spaces) is an example of a persistent space where the transition maps are given by inclusions. This paper considers only persistent spaces with a discrete set of critical values. In addition, all (co)homology groups are assumed to be taken over a field $K$.
We denote by $\Int_\omega$ the set of intervals of type $\omega$, where $\omega$ can be 
any one of the four types: open-open, open-closed, closed-open and closed-closed. The type $\omega$ will be omitted when the results apply to all four situations and intervals are written in the form of $\langle a,b \rangle.$ 

We introduce the invariant, the persistent cup-length function of general persistent spaces, by lifting the standard cup-length invariant into the persistent setting.
Let $\Xfunc:(\R,\leq)\to \Top$ be a persistent space with $t\mapsto \X_t$. The \textbf{persistent cup-length function} $\cupprod(\Xfunc):\Int\to\N$ of $\Xfunc$, see Defn.~\ref{def:cup_l_func},
is defined as the function 
from the set $\Int$ to the set $\N$ of non-negative integers, which assigns to each interval $\langle a,b\rangle$ the cup-length of the image ring\footnote{For $f:R\to S$ a graded ring morphism, we denote the graded ring $f(R)$ by $\mathbf{Im}(f)$.} $\mathbf{Im}\big(\Hfunc^*(\Xfunc)\langle a,b\rangle\big)$, which is the ring $\mathbf{Im}\big(\Hfunc^*(\X_b)\to\Hfunc^*(\X_a)\big)$ when $\langle a,b\rangle$ is a closed interval (in other cases, there is some subtlety, see Rmk.~\ref{rmk:image-ring}). Note that the persistent cup-length function is a generalization of the cup-length of spaces, since $\cupprod(\Xfunc)([a,a])$ reduces to the cup-length of the space $\X_a$. 

In the case when $\Xfunc$ is a filtration, we define a notion of a diagram to compute the persistent cup-length function (see Thm.~\ref{thm:tropical_mobius}): the \textbf{persistent cup-length diagram} $\mathbf{dgm}_{\bsigma}^{\smile}(\Xfunc):\Int\to\N$ (Defn.~\ref{def:cup_dgm}). We first assign a representative cocycle to every interval in the barcode of $\Xfunc$, and denote the family of representative cocycles by $\bsigma$. Then, the persistent cup-length diagram of an interval $\langle a,b\rangle$ is defined to be the maximum number of representative cocycles in $\Xfunc$ that have a nonzero cup product over $\langle a,b\rangle$.
It is worth noticing that the persistent cup-length diagram depends on the choice of representative cocycles; see Ex.~\ref{ex:dependence on sigma}. 

\begin{restatable}{theorem}{mobuis}
\label{thm:tropical_mobius}
Let $\Xfunc$ be a filtration, and let $\bsigma$ be a family of representative cocycles for the barcodes of $\Xfunc$.
The persistent cup-length function $\cupprod(\Xfunc)$ can be retrieved from the persistent cup-length diagram $\mathbf{dgm}_{\bsigma}^{\smile}(\Xfunc)$: for any $\langle a,b\rangle\in \Int$,
\begin{equation}\label{eq:tropical_mobuis}
    \cupprod(\Xfunc)(\langle a,b\rangle)=  \max_{\langle c,d\rangle\supseteq \langle a,b\rangle}\mathbf{dgm}_{\bsigma}^{\smile}(\Xfunc)(\langle c,d\rangle). 
\end{equation}
\end{restatable}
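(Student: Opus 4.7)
The plan is to establish equality \eqref{eq:tropical_mobuis} by proving the two inequalities separately. The conceptual engine is the elementary fact that the cup-length of any graded ring is always attained by a cup product of \emph{basis} elements: expanding each class in a chosen basis and invoking multilinearity of $\smile$, any nonzero product of $k$ positive-degree classes must contain at least one nonzero pure basis product of length $k$. Applied to the image ring $\mathbf{Im}(\Hfunc^*(\Xfunc)\langle a,b\rangle)$, whose positive-degree part is spanned (in the closed-interval case) by representative cocycles in $\bsigma$ whose persistence intervals contain $\langle a,b\rangle$, this reduces the computation of $\cupprod(\Xfunc)(\langle a,b\rangle)$ to counting lengths of nonzero cup products of tuples drawn from $\bsigma$.

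For the inequality ``$\leq$'', suppose $\cupprod(\Xfunc)(\langle a,b\rangle)=k$. By the reduction above, pick cocycles $\sigma_1,\dots,\sigma_k\in\bsigma$ whose persistence intervals each contain $\langle a,b\rangle$ and whose cup product is nonzero in the image ring at $\langle a,b\rangle$. Because cohomological restriction is a ring homomorphism, the set of times $t$ at which $\sigma_1\smile\cdots\smile\sigma_k$ is nonzero in $\Hfunc^*(\X_t)$ forms a sub-interval of the intersection of the $\sigma_i$-intervals. Let $\langle c,d\rangle$ be the maximal such interval; by assumption it contains $\langle a,b\rangle$. Then this $k$-tuple witnesses $\mathbf{dgm}_{\bsigma}^{\smile}(\Xfunc)(\langle c,d\rangle)\geq k$, so the right-hand side of \eqref{eq:tropical_mobuis} is at least $k$.

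Conversely, for ``$\geq$'', suppose $\langle c,d\rangle\supseteq\langle a,b\rangle$ satisfies $\mathbf{dgm}_{\bsigma}^{\smile}(\Xfunc)(\langle c,d\rangle)=k$, witnessed by cocycles $\sigma_1,\dots,\sigma_k\in\bsigma$ with nonzero cup product over $\langle c,d\rangle$. Each $\sigma_i$'s persistence interval contains $\langle c,d\rangle\supseteq\langle a,b\rangle$, so each $\sigma_i$ descends to a nontrivial class $\alpha_i\in\mathbf{Im}(\Hfunc^*(\Xfunc)\langle a,b\rangle)$. Since restriction respects $\smile$, the image-ring product $\alpha_1\smile\cdots\smile\alpha_k$ is the restriction of $\sigma_1\smile\cdots\smile\sigma_k$ evaluated at a time inside $\langle a,b\rangle\subseteq\langle c,d\rangle$, and is therefore nonzero. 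Hence $\cupprod(\Xfunc)(\langle a,b\rangle)\geq k$, finishing the ``$\geq$'' direction.

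The main obstacle is to treat the four interval types $\omega$ uniformly and to reconcile the argument with the subtleties flagged in Rmk.~\ref{rmk:image-ring}, where $\mathbf{Im}(\Hfunc^*(\Xfunc)\langle a,b\rangle)$ must be defined via a limit/colimit construction when $\langle a,b\rangle$ is not closed. The crux is verifying that the nonzero locus of a fixed cup product of representative cocycles really is a suitably one-sided sub-interval of the common lifespan; this relies on the ring-homomorphism property of cohomological restriction and on the fact that once a cup product vanishes upon restriction it cannot be resurrected at a smaller parameter. Once these technical points are in place, the two-sided bound combines to yield the Möbius-type reconstruction formula \eqref{eq:tropical_mobuis}.
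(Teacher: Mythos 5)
Your proposal is correct and follows essentially the same route as the paper's proof: it reduces $\cupprod(\Xfunc)(\langle a,b\rangle)$ to the length of the image ring generated by the classes $[\sigma_J]$ with $J\supseteq\langle a,b\rangle$ (Prop.~\ref{prop:generators-of-ring} together with Prop.~\ref{prop:len_of_basis}), and then uses the fact that the nonzero locus of a product of representative cocycles is an interval containing $\langle a,b\rangle$ exactly when the product of the restricted classes is nonzero at $\langle a,b\rangle$ (Prop.~\ref{prop:support}). The only difference is presentational — you split the identity into two inequalities where the paper writes a single chain of equalities — and you flag the same interval-type and image-ring subtleties that the paper also dispatches by reducing to the closed-interval case.
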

The persistent cup-length functions do not supersede the standard persistence diagrams, partly because they do not take $\Hfunc^0$ classes into account. However, it effectively augments the standard diagram in the sense that there are situations in which it can successfully capture information that standard persistence diagrams neglect (see Fig.~\ref{fig:doublefilt}). %Ex.~\ref{ex:T2-wedge}). 
Our work therefore provides additional \emph{computable} persistence-like invariants enriching the TDA toolset which can be used in applications requiring discriminating between different hypotheses such as in shape classification or machine learning. For example, \cite{kang2021evaluating} mentions that cup product could provide additional evidence when recovering the structure of animal trajectories.

\subparagraph*{A polynomial time algorithm.}

We develop a poly-time algorithm (Alg.~\ref{alg:main}) to compute the persistent cup-length diagram of a filtration $\Xfunc$ of a simplicial complex $\X$ of dimension $(k+1)$. This algorithm is output sensitive, and it has complexity bounded above by $O( (m_k )^2\cdot q_1 \cdot q_{k-1}  \cdot \max\{c_k ,q_1\})\leq O( (m_\udim)^{k+3})$
%$O( (m_\udim)^3\cdot q_{k-1}\cdot c_\udim)$ 
(cf. Thm.~\ref{thm:comp-alg3}), with $m_{\udim}$ being the cardinality of $\X$, $q_1$ being the cardinality of the barcode, and
parameters $q_{k-1}$ ($\leq q_1^{k-1}$) and $c_\udim$ ($\leq m_{\udim}$) which we describe in \S \ref{sec:algorithms} on page~\pageref{para:parameter}.
In the case of the Vietoris-Rips filtration of an $n$-point metric space, this complexity is improved to $O( (m_k )^2\cdot q_1^2 \cdot q_{k-1})$, which can be upper bounded by $O(n^{\udim^2+5\udim+6})$.  

\subparagraph*{Gromov-Hausdorff stability and discriminating power.} 
In Thm.~\ref{thm:main-stability} we prove that the persistent cup-length function is stable to perturbations of the involved filtrations (in a suitable sense involving weak homotopy equivalences).  Below, $d_{\mathrm{E}},d_{\mathrm{HI}}$ and $d_{\mathrm{GH}}$ denote the erosion, homotopy-interleaving and Gromov-Hausdorff distances, respectively. See \S \ref{sec:stability}
% \cite[\S D]{contessoto2021persistent} 
for details.

In general, the Gromov-Hausdorff distance is NP-hard to compute \cite{Schmiedl17} whereas the erosion distance is computable in polynomial time (see \cite[Thm. 5.4]{kim2021spatiotemporal}) and thus, in combination with Thm. \ref{thm:main-stability}, provides a computable estimate for the Gromov-Hausdorff distance.

\begin{restatable}[Homotopical stability]{theorem}{homostab} 
\label{thm:main-stability}
For two persistent spaces $\Xfunc,\mathbf{Y}:(\R,\leq)\to\Top$, 
\begin{equation}\label{eq:dE-dHI}
    d_{\mathrm{E}}(\cupprod(\Xfunc),\cupprod(\mathbf{Y}))\leq d_{\mathrm{HI}}(\Xfunc,\mathbf{Y}).
\end{equation}
For the Vietoris-Rips filtrations $\VR(X)$ and $\VR(Y)$ of compact metric spaces $X$ and $Y$,
\begin{equation}\label{eq:dE-dGH}
d_{\mathrm{E}}\left(\cupprod\left(\mathbf{VR}(X)\right),\cupprod\left(\mathbf{VR}(Y)\right)\right)\leq 2\cdot d_{\mathrm{GH}}(X,Y).
\end{equation}
\end{restatable}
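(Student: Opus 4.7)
The plan is to first reduce \eqref{eq:dE-dHI} to the case of strict $\e$-interleavings via the homotopy invariance of cohomology. Singular cohomology with field coefficients is homotopy invariant as a functor into the category of graded rings, so a levelwise weak equivalence between persistent spaces induces a natural isomorphism of persistent cohomology rings and hence an equality of their persistent cup-length functions. Thus if $\dhi(\Xfunc,\Yfunc)<\e$, we may replace $\Xfunc,\Yfunc$ by weakly equivalent persistent spaces $\Xfunc',\Yfunc'$ that are genuinely $\e$-interleaved, and it suffices to establish $\de(\cupprod(\Xfunc'),\cupprod(\Yfunc'))\leq \e$ in that case.

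For the interleaving step, let $f\colon \Xfunc'\to\Yfunc'[\e]$ and $g\colon\Yfunc'\to\Xfunc'[\e]$ be the interleaving maps. Applying $\Hfunc^*$ and combining naturality of $f,g$ with the interleaving identity $g_{t+\e}\circ f_t=\Xfunc'(t\to t{+}2\e)$, one checks that for all $a\leq b$ the structure map factors as a composition of graded-ring morphisms
\[
\Hfunc^*(\X'_{b+\e})\xrightarrow{g_b^*}\Hfunc^*(\Y'_b)\to\Hfunc^*(\Y'_a)\xrightarrow{f_{a-\e}^*}\Hfunc^*(\X'_{a-\e}).
\]
Consequently $\im\bigl(\Hfunc^*(\X'_{b+\e})\to\Hfunc^*(\X'_{a-\e})\bigr)$ is the $f_{a-\e}^*$-image of a graded subring of $\im\bigl(\Hfunc^*(\Y'_b)\to\Hfunc^*(\Y'_a)\bigr)$. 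Two elementary facts finish the argument: cup-length is monotone under inclusion of graded subrings (a nonzero $k$-fold product in a subring remains nonzero in the ambient ring), and it is monotone under surjective graded-ring morphisms (a nonzero product downstairs lifts to a nonzero product upstairs). Applying these in turn yields
\[
\cupprod(\Xfunc')([a-\e,b+\e])\;\leq\;\cupprod(\Yfunc')([a,b]),
\]
and the symmetric inequality follows by swapping $\Xfunc'$ and $\Yfunc'$. This is exactly the condition $\de(\cupprod(\Xfunc),\cupprod(\Yfunc))\leq\e$, and letting $\e\downarrow\dhi(\Xfunc,\Yfunc)$ gives \eqref{eq:dE-dHI}.

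Inequality \eqref{eq:dE-dGH} then follows from \eqref{eq:dE-dHI} combined with the standard bound $\dhi(\VR(X),\VR(Y))\leq 2\,\dgh(X,Y)$: any correspondence realizing $\dgh(X,Y)<\e$ induces a simplicial $2\e$-interleaving of the Vietoris--Rips filtrations, which is in particular a homotopy $2\e$-interleaving.

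The main obstacle will be carrying the argument through uniformly for the four interval types. For closed intervals $[a,b]$ the image-ring definition is the one used above, but for open or half-open intervals one must pass to the appropriate (co)limit of image rings as in Rmk.~\ref{rmk:image-ring}, and verify that the factorization through $\Yfunc'$ is compatible with these (co)limits; this amounts to a careful but routine diagram chase exploiting the discreteness of critical values. A secondary technical point is ensuring that the weak-equivalence replacement in the homotopy-interleaving reduction respects the image-ring construction over arbitrary intervals, which again reduces to standard facts about homotopy (co)limits in $\Top$.
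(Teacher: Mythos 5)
Your proposal is correct and follows essentially the same route as the paper: the paper likewise first upgrades the homotopy interleaving to an interleaving of persistent cohomology rings via the homotopy invariance of $\Hfunc^*$ (Thm.~\ref{thm:cohomology stability}), then derives the erosion bound from the factorization of structure maps through the interleaving together with the fact that $\len$ is an inj-surj invariant (Lem.~\ref{lem:gf}, Prop.~\ref{prop:injective-surjective-length}, Thm.~\ref{thm:stability}), and finally invokes $d_{\mathrm{HI}}(\VR(X),\VR(Y))\leq 2\,d_{\mathrm{GH}}(X,Y)$ for Eq.~(\ref{eq:dE-dGH}). The only cosmetic difference is that the paper modularizes these steps into general statements about persistent invariants, whereas you inline them for the cup-length; the interval-type concern you raise at the end is moot since the erosion distance is formulated on closed intervals.
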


Through several examples, we show that the persistent cup-length function helps in discriminating filtrations when the persistent homology fails to or has a relatively weak performance in doing so.  Ex.~\ref{example:samePH} is a situation when two filtrations have identical persistent homology but induce different persistent cup-length functions. 
In addition, in %\cite[Ex.~54]{contessoto2021persistent} 
Ex.~\ref{ex:T2-wedge} 
by specifying suitable metrics on the torus $\T^2$ and on the wedge sum $\bbS^1 \vee \bbS^2 \vee \bbS^1$, we compute the erosion distance between their persistent cup-length functions (see Fig.~\ref{fig:torus_sphere_VR}) and apply Thm.~\ref{thm:main-stability} to obtain a lower bound $\frac{\pi}{3}$ for the Gromov-Hausdorff distance between them $\T^2$ and $\bbS^1 \vee \bbS^2 \vee \bbS^1$ (see Prop.~\ref{prop:erosion-comp}):
%(see \cite[Prop.~55]{contessoto2021persistent}):
\[\frac{\pi}{3}=d_{\mathrm{E}}\left(\cupprod(\VR(\T^2)),\cupprod(\VR(\bbS^1\vee\bbS^2\vee\bbS^1))\right)\leq 2\cdot d_{\mathrm{GH}}\left(\T^2,\bbS^1\vee\bbS^2\vee\bbS^1\right).\]
We also
verify that the interleaving distance between the persistent homology of these two spaces is at most $\frac{3}{5}$ of the bound obtained from persistent cup-length functions (a fact which we also establish). See %\cite[Rmk.~56]{contessoto2021persistent}. 
Rmk.~\ref{rmk:inter-torus-wedge}. 

\begin{figure}[H]
\centering
    \begin{tikzpicture}[scale=0.65]
    \begin{axis} [ 
    ticklabel style = {font=\Large},
    axis y line=middle, 
    axis x line=middle,
    ytick={0.5,0.67,1},
    yticklabels={$\frac{\pi}{2}$, $\frac{2\pi}{3}$,$\pi$},
    xtick={0.5,0.67,1},
    xticklabels={$\frac{\pi}{2}$,$\frac{2\pi}{3}$, $\pi$},
    xmin=0, xmax=1.1,
    ymin=0, ymax=1.1,]
   \addplot [mark=none,color=dgmcolor!40!white] coordinates {(0,0) (1,1)};
    \addplot [thick,color=dgmcolor!40!white,fill=dgmcolor!40!white, 
                    fill opacity=0.45]coordinates {
            (0,.67) 
            (0,0)
            (.67,.67)
            (0,.67)};
    \addplot [thick,color=dgmcolor!40!white,fill=dgmcolor!40!white, 
                    fill opacity=0.45]coordinates {
            (0.67,.8) 
            (.67,.67)
            (.8,.8)
            (0.67,.8)};
    \addplot [thick,color=dgmcolor!40!white,fill=dgmcolor!40!white, 
                    fill opacity=0.45]coordinates {
            (0.67,.8) 
            (.67,.67)
            (.8,.8)
            (0.67,.8)};
    \addplot [thick,color=dgmcolor!40!white,fill=dgmcolor!40!white, 
                    fill opacity=0.45]coordinates {
            (0.8,.857) 
            (.8,.8)
            (.857,.857)
            (0.8,.857)};
    \addplot [thick,color=dgmcolor!40!white,fill=dgmcolor!40!white, 
                    fill opacity=0.45]coordinates {
            (0.857,.89) 
            (.857,.857)
            (.89,.89)
            (0.857,.89) };
    \addplot [thick,color=dgmcolor!40!white,fill=dgmcolor!40!white, 
                    fill opacity=0.45]coordinates {
            (0.89,.91) 
            (.89,.89)
            (.91,.91)
            (0.89,.91) };
    \node[mark=none] at (axis cs:.74,.76){\tiny{\textsf{2}}};
    \node[mark=none] at (axis cs:.25,.45){\textsf{2}};
    %\node[mark=none] at (axis cs:4,2){\Large$\cupprod(\VR(\T^2))$};
    \end{axis}
    \end{tikzpicture}
    \hspace{1.5cm}
    \begin{tikzpicture}[scale=0.65]
    \begin{axis} [ 
    ticklabel style = {font=\Large},
    axis y line=middle, 
    axis x line=middle,
    ytick={0.5,0.67,1},
    yticklabels={$\frac{\pi}{2}$,$\frac{2\pi}{3}$,$\pi$},
    xtick={0.5,0.6,0.67,1},
    xticklabels={$\frac{\pi}{2}$,$\zeta$, $\frac{2\pi}{3}$, $\pi$},
    xmin=0, xmax=1.1,
    ymin=0, ymax=1.1,]
    \addplot [mark=none] coordinates {(0,0) (1,1)};
    \addplot [thick,color=dgmcolor!20!white,fill=dgmcolor!20!white, 
                    fill opacity=0.45]coordinates {
            (0,0.6)
            (0,0)
            (0.6,0.6)
            (0,0.6)};
    \node[mark=none] at (axis cs:.25,.45){\textsf{1}};
   % \node[mark=none] at (axis cs:0.5,0.2){\Large$\cupprod(\VR(\bbS^1\vee\bbS^2\vee\bbS^1))|_{(0,\zeta)}$};
    \end{axis}
    \end{tikzpicture}
\caption{The persistent cup-length functions $\cupprod(\VR(\T^2))$ (left) and $\cupprod(\VR(\bbS^1\vee\bbS^2\vee\bbS^1))|_{(0,\zeta)}$ (right), respectively. Here, $\zeta=\arccos(-\tfrac{1}3)\approx 0.61\pi$.} 
\label{fig:torus_sphere_VR}
\end{figure}

\medskip
Proofs of all the theorems and results mentioned above are available in the appendix. %of the extended version \cite{contessoto2021persistent}.

\section{Persistent cup-length function}
\label{sec:Persistent-cup-length}
In the standard setting of persistent homology, one considers a \textit{filtration} of spaces, i.e.~a collection of spaces $\Xfunc=\{\X_t\}_{t\in\R}$ such that $\X_t\subset \X_s$ for all $t\leq s$, and studies the \textit{$p$-th persistent homology} for any given dimension $p$, defined as the functor $\Hfunc_p(\Xfunc):(\R,\leq)\to\Vect$ which sends each $t$ to the $p$-th homology $\Hfunc_p(\X_t)$ of $\X_t$, see \cite{edelsbrunner2008Persistent,carlsson2009topology}.
Here $\Vect$ denotes the category of vector spaces.
The $p$-th persistent homology encodes the lifespans, represented by intervals, of the $p$-dimensional holes ($p$-cycles that are not $p$-boundaries) in $\Xfunc$. 
The collection $\caB_p(\Xfunc)$ of these intervals is called \textit{the $p$-th barcode of $\Xfunc$}, and its elements are named \textit{bars}. 
The \emph{$p$-th persistent cohomology $\Hfunc^p(\Xfunc)$} and its corresponding barcode are defined dually. Since persistent homology and persistent cohomology have the same barcode \cite{de2011dualities}, we will denote both barcodes by $\caB_p(\Xfunc)$ for dimension $p$.
We call the disjoint union $\caB(\Xfunc):=\sqcup_{p\in \N}\caB_p(\Xfunc)$ \emph{the total barcode of $\Xfunc$}, and assume bars in $\caB(X)$ are of the same interval type\footnote{In TDA it is often the case that bars are of a fixed interval type, usually in closed-open form \cite{patel2018generalized}.}. 

By considering the \textit{cup product} operation on cocycles, 
the persistent cohomology is naturally enriched 
with the structure of a \emph{persistent graded ring}, which carries additional information and leads to invariants stronger than standard barcodes in cases like Ex.~\ref{example:samePH}.

In \S \ref{sec:cup_legnth} we recall the cup product operation, as well as the notion and properties of the cup-length invariant of cohomology rings. In \S \ref{sec:cup_l_func} we lift the cup-length invariant to a persistent invariant, called the persistent cup-length function, and  examine some examples that highlight its strength. 
Proofs and details are available in %\cite[\S B]{contessoto2021persistent}.
\textsection \ref{sec:supp_cup_l_func}.

\subsection{Cohomology rings and the cup-length invariant}\label{sec:cup_legnth}

For a topological space $\X$ and a dimension $p\in \N$, denote by $C_p(\X)$ and $C^p(\X)$ the spaces of singular $p$-chains and $p$-cochains, respectively. For a cocycle $\sigma$, denote by $[\sigma]$ the cohomology class of $\sigma$. If $\X$ is given by the geometric realization of some simplicial complex, then we consider its simplicial cohomology, by assuming an ordering on the vertex set of $\X$ and considering its simplices to be sets of ordered vertices.

Let $\Xfunc:=\{\X_t\}_{t\in\R}$ be a filtration of topological spaces, and let
$I=\langle b,d\rangle\in\caB_p(\Xfunc)$. If $I$ is closed at its right end $d$, we denote by $\sigma_I$ a cocycle in $C^p(\X_d)$; 
if not, we denote by $\sigma_I$ a cocycle in $C^p(\X_{d-\delta})$ for sufficiently small $\delta>0$. For any $t\leq d$, denote by $\sigma_I|_{C_p(\X_t)}$ the restriction of $\sigma_I$ to $C_p(\X_t)(\subset C_p(\X_d)).$
We introduce the notation $[\sigma_I]_t$ by defining $[\sigma_I]_t$ to be $\left[\sigma_I|_{C_p(\X_t)}\right]$ for $t\leq d$ and $0$ for $t>d$.

\begin{definition}[Representative cocycles]
Let $\bsigma^p:=\{\sigma_I\}_{I\in\caB_p(\Xfunc)}$ be a $\caB_p(\Xfunc)$-indexed collection of $p$-cocycles in $\Xfunc$. %i.e. if $I=[b,d]$ then $\sigma_I$ denotes a cocycle in $C^p(\X_d)$.
The collection $\bsigma^p$ is called \emph{a family of representative $p$-cocycles for $\Hfunc^p(\Xfunc)$}, if for any $t\in\R$, the set $\{[\sigma_I]_t\}_{t\in I\in \caB_p(\Xfunc)}$ forms a linear basis for $\Hfunc^p(\X_t)$. In this case, each $\sigma_I$ is called a \emph{representative cocycle associated to the interval $I$}.
The disjoint union $\bsigma:=\sqcup_{p\in\N}\bsigma^p$ is called \emph{a family of representative cocycles for $\Hfunc^*(\Xfunc)$}.
\end{definition}

The existence of a family of representative cocycles for $\Hfunc^*(\Xfunc)$ (assuming that the filtration $\Xfunc$ has finite critical values and finite-dimensional cohomology point-wise) is guaranteed by the interval decomposition theorem of point-wise finite dimensional persistence modules (see \cite{crawley2015decomposition}) and the axiom of choice.
Software programs are available to compute the total barcode and return a family of representative cocycles, such as Ripser (see \cite{bauer2021ripser}), Java-Plex (see \cite{adams2014javaplex}), Dionysus (see \cite{de2011persistent}), and Gudhi (see \cite{maria2014gudhi}).
These cocycles are naturally equipped with the cup product operation, which we recall as follows.

\subparagraph*{Cup product.} We recall the cup product operation in the setting of simplicial cohomology. 
Let $\X$ be a simplicial complex with an ordered vertex set $\{x_1<\dots<x_n\}$. 
For any non-negative integer $p$, we denote a $p$-simplex by $\alpha:=[\alpha_0,\dots,\alpha_{p}]$ where $\alpha_0<\dots<\alpha_{p}$ are ordered vertices in $\X$, and by $\alpha^*:C_p( \X)\to K$ the dual of $\alpha$, where $\alpha^*(\alpha)=1$ and $\alpha^*(\tau)=0$ for any $p$-simplex $\tau\neq \alpha$. Here $K$ is the base field as before, and $\alpha^*$ is also called a $p$-cosimplex.
Let $\beta:=[\beta_0,\dots,\beta_{q}]$ be a $q$-simplex for some integer $q\geq 0$. The \emph{cup product} $\alpha^*\smile \beta^*$ is defined as the linear map $C_{p+q}( \X)\to  K$ such that for any $ (p+q)$-simplex $\tau=[\tau_{0},\dots,\tau_{p+q}]$, 
\[\alpha^{*}\smile\beta^{*}(\tau):=\alpha^*([\tau_{0},\dots,\tau_{p}])\cdot \beta^*([\tau_{p},\dots,\tau_{p+q}]).\] 
Equivalently, 
we have that 
$\alpha^{*}\smile\beta^{*}$ is $[\alpha_0,\dots,\alpha_{p},\beta_1,\dots,\beta_{q}]^*$
if $\alpha_{p}=\beta_0$, and $0$ otherwise.
By a \emph{$p$-cochain} we mean a finite linear sum $\sigma=\sum_{j=1}^h \lambda_j \alpha^{j*}$, where each $\alpha^j$ is a $p$-simplex in $\X$ and $\lambda_j\in K$. The \emph{cup product} of a $p$-cochain $\sigma=\sum_{j=1}^h \lambda_j \alpha^{j*}$ and a $q$-cochain $\sigma'=\sum_{j'=1}^{h'}\mu_{j'}\beta^{j'*}$ is defined as $\sigma\smile\sigma':=\sum_{j,j'}\lambda_j\mu_{j'}\left(\alpha^{j*}\smile \beta^{j'*}\right).$ 

In our algorithms, $K$ is taken to be $\Z_2$ and every $p$-simplex $\alpha=[x_{i_0},\dots,x_{i_{p}}]$ is represented by the ordered  list 
$[i_0,\dots,i_{p}]$. We assume a total order (e.g. the order given in \cite{bauer2021ripser}) on the simplices in $\X$.
Since coefficients are either $0$ or $1$, a $p$-cochain can be written as $\sigma=\sum_{j=1}^h \alpha^{j*}$ for some $\alpha^j=[x_{i_0^j},\dots,x_{i_{p}^j}]$ and will be represented by the list $\left[ [i_0^1,\dots,i_{p}^1],\dots,[i_0^h,\dots,i_{p}^h] \right].$ We call $h$ the size of $\sigma$. Let $\X_p\subset \X$ be the set of $p$-simplices. Alg.~\ref{alg:cup_product} computes the cup product of two cochains over $\Z_2$.

\begin{algorithm}[H] 
\SetKwData{Left}{left}\SetKwData{This}{this}\SetKwData{Up}{up}
\SetKwInOut{Input}{Input}\SetKwInOut{Output}{Output}
\Input{Two cochains $\sigma_1$ and $\sigma_2$, and the simplicial complex $\X$.}
\Output{The cup product $\sigma=\sigma_1\smile \sigma_2$, at cochain level.}
\BlankLine
$\sigma\gets [\,]$\;
%$k_1,k_2\gets \dim(\sigma_1),\dim(\sigma_2)$\;
\If{$\dim(\sigma_1)+\dim(\sigma_2)\leq \dim(\X)$}
    {\For%(\tcp*[f]{$O(\mathrm{size}(\sigma_1)\cdot\mathrm{size}(\sigma_2))$})
    {$i \leq \mathrm{size}(\sigma_1)$ and $j \leq \mathrm{size}(\sigma_2)$ }{
            $a\gets \sigma_1(i)$ and $b\gets \sigma_2(j)$\;
            \If{$a[\mathrm{end}]== b[\mathrm{first}]$}{
                $c \gets a.\mathrm{append}(b[\mathrm{second}:\mathrm{end}])$\;
                \If%(\tcp*[f]{$O(\card(\X_{\dim(\sigma_1)+\dim(\sigma_2)}))$})
                {$c\in \X_{\dim(\sigma_1)+\dim(\sigma_2)}$ \label{line:last-if-cup}}{
                     Append $c$ to $\sigma$\;
                     }
                }
            }
    }
\Return $\sigma$.
\caption{$\mathrm{CupProduct}(\sigma_1,\sigma_2,\X)$ }
\label{alg:cup_product}
\end{algorithm}
 
\medskip

\begin{remark}[Complexity of Alg.~\ref{alg:cup_product}] \label{rmk:complexity_cup_prod}
Let $c$ be the complexity of checking whether a simplex is in the simplicial complex, and let $m:= \card(\X)$ be the number of simplices. For $\Z_2$-coefficients, cocycles are in one-to-one correspondence with the subsets of $\X$, so the size of a cocycle is at most $m$. 
Thus, the complexity for Alg.~\ref{alg:cup_product} is $O(\mathrm{size}(\sigma_1)\cdot\mathrm{size}(\sigma_2)\cdot c)\leq O( m^2\cdot c)$.
\end{remark}

\subparagraph*{Cohomology ring and cup-length} For a given space $\X$, the cup product yields a bilinear map $\smile:\Hfunc^p(\X)\times\Hfunc^q(\X)\to\Hfunc^{p+q}(\X)$ of vector spaces.
In particular, it turns the total cohomology vector space $\Hfunc^*(\X):=\bigoplus_{p\in\N}\Hfunc^p(\X)$ into a graded ring $(\Hfunc^*(\X),+,\smile)$ (see 
%\cite[\S B]{contessoto2021persistent}
\textsection \ref{sec:supp_cup_l_func} 
for the explicit definition of a graded ring).
The \textit{cohomology ring map} $\X\mapsto \Hfunc^*(\X)$ defines a contravariant functor from the category of spaces, $\Top$, to the category of graded rings, $\Ring$
(see \cite[\textsection 3.2]{hatcher2000}). 
To avoid the difficulty of describing and comparing ring structures in a computer, we study a computable invariant of the graded cohomology ring, called the \textit{cup-length}. See
%\cite[\S A.2]{contessoto2021persistent}
\textsection \ref{sec:eip-mono-inv} 
for the general notion of \emph{invariants}. 
For a category $\CC$, denote by $\ob(\CC)$ the set of objects in $\CC$.

\begin{restatable}[Length and cup-length]{definition}{deflength}
\label{def:length}
The \textbf{length} of a graded ring $R=\bigoplus_{p\in\N} R_p$ is the largest non-negative integer $\ell$ such that there exist positive-dimension homogeneous elements $\eta_1,\dots,\eta_\ell\in R$ (i.e. $\eta_1,\dots,\eta_\ell\in \bigcup_{p\geq 1} R_p$) with $\eta_1 \bullet \dots \bullet \eta_\ell \neq 0$. %reference for adding 'homogeneous ': https://arxiv.org/pdf/0901.0877.pdf
If $\bigcup_{p\geq 1} R_p=\emptyset$, then we define the length of $R$ to be zero.
We denote the length of a graded ring $R$ by $\len(R)$, and call the following map the \textbf{length invariant}:
\[\len:\ob(\Ring)\to\N,\text{ with }R\mapsto \len(R).\]

When $R=(\Hfunc^*(\X),+,\smile)$ for some space $\X$,
we denote $\cupprod(\X):=\len(\Hfunc^*(\X))$ and call it the \textbf{cup-length of $\X$}.
And we call the following map the \textbf{cup-length invariant}:
\[\cupprod:\ob(\Top)\to\N, \text{ with }X\mapsto \cupprod(\X).\]
\end{restatable}

\begin{remark} [About the strength of the cup-length invariant]
In some cases, cup-length captures more information than homology. One well-known example is given by the torus $\T^2$ v.s. the wedge sum $\bbS^1 \vee \bbS^2 \vee \bbS^1$, where despite having the same homology groups, these two spaces have \emph{different} cup-length. By specifying suitable metrics and considering the Vietoris-Rips filtrations of the two spaces, the strength of cup-length persists in the setting of persistence %(see \cite[Ex.~54]{contessoto2021persistent}). 
(see Ex.~\ref{ex:T2-wedge}).
It is also worth noticing that cup-length is not a complete invariant for graded cohomology rings. For instance, after taking the wedge sum of $\T^2$ and $\bbS^1 \vee \bbS^2 \vee \bbS^1$ with $\T^2$ respectively, the resulted spaces $\T^2 \vee \T^2$ and $\bbS^1 \vee \bbS^2 \vee \bbS^1 \vee \T^2$ still have different ring structures, but they have the same cup-length (since cup length takes the `maximum').
\end{remark}

An important fact about the cup-length is that it can be computed using a linear basis for the cohomology vector space. In %\cite[Prop.~36]{contessoto2021persistent} 
Prop.~\ref{prop:len_of_basis} 
we show that if $B_p$ is a linear basis for $\Hfunc^p(\X)$ for each $p\geq 1$ and $B:=\bigcup_{p\geq 1} B_p$, then $\cupprod(\X)=\sup\left\{\ell\geq 1\mid B^{\smile \ell}\neq \{0\}\right\}$. 

\subsection{Persistent cohomology rings and persistent cup-length functions}\label{sec:cup_l_func}
We study the persistent cohomology ring of a filtration and 
the associated notion of persistent cup-length invariant. 
We examine several examples of this persistent invariant and establish a way to visualize it in the half-plane above the diagonal. See %\cite[\S B.2]{contessoto2021persistent} 
\textsection \ref{supp:sec_cup_l_func} 
for the proofs of our results in this section. 

Filtrations of spaces are special cases of \textit{persistent spaces}. In general, for any category $\CC$, one can define the notion of a \emph{persistent object} in $\CC$, as a functor from the poset  $(\R,\leq)$ (viewed as a category) to the category $\CC$. 
For instance, a functor $\Rfunc:(\R,\leq)\to\Ring$ is called a \textbf{persistent graded ring}. 
Recall the contravariant cohomology ring functor $\Hfunc^*:\Top\to\Ring$. 
Given a persistent space $\Xfunc:(\R,\leq)\to\Top$, 
the composition $\Hfunc^*(\Xfunc):(\R,\leq)\to\Ring$ is called the \textbf{persistent cohomology ring of $\Xfunc$}. Due to the contravariance of $\Hfunc^*$, we consider only contravariant persistent graded rings in this paper.

\begin{definition}\label{def:cup_l_func}
We define the \textbf{persistent cup-length function} of a persistent space $\Xfunc$ as the function $\cupprod(\Xfunc):\Int\to\N$ given by $\langle t,s \rangle\mapsto\len\left(\mathbf{Im}(\Hfunc^*(\Xfunc)(\langle t,s\rangle)\right).$
\end{definition}

\begin{remark}[Notation for image ring]
\label{rmk:image-ring} 
%Here we explain the notation $\mathbf{Im}(\Hfunc^*(\Xfunc)(\langle t,s\rangle)$ for non-closed intervals.
 $\mathbf{Im}(\Hfunc^*(\Xfunc)(\langle t,s\rangle)$ is defined as
the image ring $\mathbf{Im}(\Hfunc^*(\Xfunc)([t-\delta,s+\delta]))=\mathbf{Im}(\Hfunc^*(\X_{s+\delta})\to\Hfunc^*(\X_{t-\delta}))$ for sufficiently small $\delta>0$, when $\langle t,s\rangle=(t,s)$, 
and is defined similarly for the cases when $\langle t,s\rangle=(t,s]$ or $[t,s).$

\end{remark}

\begin{remark}
It follows from %\cite[Prop.~38]{contessoto2021persistent} 
Prop.~\ref{prop:injective-surjective-length} 
that the cup-length invariant $\cupprod$ is non-increasing under surjective morphisms and non-decreasing under injective morphisms, which we call an \emph{inj-surj invariant}. 
%(see Defn.~\ref{defi:injective-surjective-inv}). 
As a consequence %(see \cite[Prop.~32]{contessoto2021persistent}), 
(see Prop.~\ref{thm:Functoriality of persistent invariants}), 
for any persistent space $\Xfunc$, the persistent cup-length function $\cupprod(\Xfunc)$ defines a \emph{functor} from $(\Int,\leq)$ to $(\N,\geq)$.
\end{remark}

Prop.~\ref{prop:generators-of-ring} below allows us to compute the cohomology images of a persistent cohomology ring from representative cocycles, which will be applied to compute persistent cup-length functions in Ex.~\ref{ex:cup_l_func_klein} and prove Thm.~\ref{thm:tropical_mobius},
%in the extended version of this paper, see \cite[page~24]{contessoto2021persistent}. 
see page~\pageref{pf:tropical_mobius}. 
Prop.~\ref{prop:prod-coprod-pers-cup} allows us to simplify the calculation of persistent cup-length functions in certain cases, such as the Vietoris-Rips filtration of products or wedge sums of metric spaces, e.g. Ex.~\ref{ex:T2-wedge}. 

\begin{restatable}[Persistent image ring]{proposition}{propimagering}
\label{prop:generators-of-ring}
Let $\Xfunc=\{\X_t\}_{t\in\R}$ be a filtration, together with a family of representative cocycles $\bsigma=\{\sigma_I\}_{I\in \caB(\Xfunc)}$ for $\Hfunc^*(\Xfunc)$. Let $t\leq s$ in $\R$. Then
$\mathbf{Im}(\Hfunc^*(\X_s)\to \Hfunc^*(\X_t))=\langle [\sigma_I]_t: [t,s]\subset I\in\caB(\Xfunc) \rangle,$
generated as a graded ring. 
\end{restatable}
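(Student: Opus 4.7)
The plan is to prove the stated equality by separately establishing the two inclusions. Throughout, write $\iota^*\colon \Hfunc^*(\X_s)\to\Hfunc^*(\X_t)$ for the restriction map induced by the inclusion $\X_t\hookrightarrow\X_s$; by naturality of the cup product, $\iota^*$ is a morphism of graded rings and hence $\mathbf{Im}(\iota^*)$ is a graded subring of $\Hfunc^*(\X_t)$.

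For the inclusion ``$\supseteq$'', fix $I\in\caB(\Xfunc)$ with $[t,s]\subset I$. Since $s\in I$, the class $[\sigma_I]_s$ is a well-defined basis element of $\Hfunc^*(\X_s)$. Because $[\sigma_I]_r$ is defined at every $r\le s$ in $I$ as the cohomology class of the cochain restriction $\sigma_I|_{C_*(\X_r)}$, naturality gives $\iota^*[\sigma_I]_s=[\sigma_I]_t$. Hence each such $[\sigma_I]_t$ lies in $\mathbf{Im}(\iota^*)$, and since $\mathbf{Im}(\iota^*)$ is a graded subring, the entire graded subring generated by $\{[\sigma_I]_t:[t,s]\subset I\}$ is contained in $\mathbf{Im}(\iota^*)$.

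For the reverse inclusion ``$\subseteq$'', the key tool is a dimension count in each cohomological degree $p$. On one hand, the interval decomposition of the persistence module $\Hfunc^p(\Xfunc)$ afforded by the representative family $\bsigma^p$ shows that the rank of $\iota^*\colon \Hfunc^p(\X_s)\to\Hfunc^p(\X_t)$ equals the number of bars $I\in\caB_p(\Xfunc)$ whose supporting interval contains both $t$ and $s$, i.e.\ $[t,s]\subset I$. On the other hand, the collection $\{[\sigma_I]_t:[t,s]\subset I\in\caB_p(\Xfunc)\}$ is a subset of the basis $\{[\sigma_I]_t:t\in I\in\caB_p(\Xfunc)\}$ of $\Hfunc^p(\X_t)$, so it is linearly independent; by the previous paragraph these classes already lie in $\mathbf{Im}(\iota^*)$; and their cardinality matches the rank. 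Consequently, they form a $K$-linear basis of $\mathbf{Im}(\iota^*)\cap\Hfunc^p(\X_t)$. Summing over $p$, the graded vector space $\mathbf{Im}(\iota^*)$ coincides with the $K$-linear span of $\{[\sigma_I]_t:[t,s]\subset I\}$. Since this span is already closed under the cup product (it equals a subring), it coincides with the graded subring it generates, proving the ``$\subseteq$'' inclusion.

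The step most deserving of care is the rank computation in the dimension count, since one must invoke the interval decomposition for the \emph{contravariant} persistent cohomology module and verify that bars containing $[t,s]$ are exactly those whose interval summand contributes a nonzero identity transition from $\X_s$ to $\X_t$. Once this standard persistence fact is in hand, the rest of the argument is essentially formal, and the two ends of the cardinality comparison fit together to force equality of the image with the span, hence with the graded subring generated by the relevant $[\sigma_I]_t$.
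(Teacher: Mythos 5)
Your proof is correct, but it reaches the inclusion $\subseteq$ by a different route than the paper. The paper's argument is a one-line ring-theoretic observation: the set $A=\{[\sigma_I]_s: s\in I\in\caB(\Xfunc)\}$ is a linear basis of $\Hfunc^*(\X_s)$, hence generates it as a graded ring; a ring morphism carries a generating set of the domain to a generating set of its image, so $\Hfunc^*(\iota_t^s)(A)$ generates $\mathbf{Im}(\Hfunc^*(\iota_t^s))$; and by the paper's Rmk.~\ref{rmk:rep_cocycle} the nonzero elements of $\Hfunc^*(\iota_t^s)(A)$ are exactly $\{[\sigma_I]_t:[t,s]\subset I\}$ (the zero images may be discarded from any generating set). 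You instead run a degreewise dimension count to show that $\{[\sigma_I]_t:[t,s]\subset I\}$ is already a \emph{linear basis} of $\mathbf{Im}(\Hfunc^*(\iota_t^s))$ and then sandwich $\operatorname{span}\subseteq\langle\,\cdot\,\rangle\subseteq\mathbf{Im}$; this is valid and in fact establishes the slightly stronger statement that the image is linearly spanned (not merely ring-generated) by these classes. The one step you rightly flag as delicate, the identity $\rank(\iota^*|_{\Hfunc^p})=\#\{I\in\caB_p(\Xfunc):[t,s]\subset I\}$, does not actually require invoking the interval decomposition of the contravariant module: since the basis $A$ of $\Hfunc^p(\X_s)$ maps onto a spanning set of the image, and its nonzero images $\{[\sigma_I]_t:[t,s]\subset I\}$ are linearly independent (being a subset of the basis of $\Hfunc^p(\X_t)$), they automatically form a basis of the image, which gives the rank count directly from Rmk.~\ref{rmk:rep_cocycle}. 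In short: the paper's proof buys brevity by working at the level of generating sets, while yours buys the explicit linear-basis description of the image at the cost of an extra (but standard) rank computation.
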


\begin{restatable}{proposition}{propproperty}
\label{prop:prod-coprod-pers-cup}
Let $\Xfunc,\Yfunc:(\R,\leq)\to\Top$ be two persistent spaces. Then:
\begin{itemize}
    \item $\cupprod\left(\Xfunc\times\Yfunc\right)=\cupprod(\Xfunc)+\cupprod(\Yfunc),\text{  }$
    \item $\cupprod\left(\Xfunc\amalg\Yfunc\right)=\max\{\cupprod(\Xfunc),\cupprod(\Yfunc)\},\text{ and }$
    \item $\cupprod\left(\Xfunc\vee\Yfunc\right)=\max\{\cupprod(\Xfunc),\cupprod(\Yfunc)\}$.
\end{itemize}
Here $\times,\amalg$ and $\vee$ denote point-wise product, disjoint union, and wedge sum, respectively.
\end{restatable}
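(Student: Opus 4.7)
The plan is to prove each of the three identities pointwise on intervals and then read off the persistent statement, since by Definition~\ref{def:cup_l_func} the persistent cup-length function is computed interval-by-interval from the length of an image ring. Fix $t\leq s$; for each of $\times,\amalg,\vee$ the key is to express the image ring $\mathbf{Im}\bigl(\Hfunc^*((\Xfunc\,\square\,\Yfunc)_s)\to\Hfunc^*((\Xfunc\,\square\,\Yfunc)_t)\bigr)$ in terms of the image rings $R:=\mathbf{Im}(\Hfunc^*(\Xfunc)([t,s]))$ and $S:=\mathbf{Im}(\Hfunc^*(\Yfunc)([t,s]))$, using naturality of the relevant classical decompositions. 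Then the three claims reduce to three algebraic lemmas about the length invariant of graded $K$-algebras.

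I would establish the following three length lemmas. First, for graded $K$-algebras $R,S$ with $K$ a field, $\len(R\otimes_K S)=\len(R)+\len(S)$: the inequality $\geq$ is obtained by tensoring a product of $\len(R)$ positive-degree generators in $R$ with one of $\len(S)$ generators in $S$, and the inequality $\leq$ follows by pushing a nonzero product of homogeneous elements in $R\otimes S$ down to nonzero products in $R$ and $S$ separately using bidegree bookkeeping and the fact that over a field the tensor product of nonzero homogeneous pieces is nonzero. Second, $\len(R\times S)=\max\{\len(R),\len(S)\}$: cup products in a direct product ring are computed componentwise, so a nonzero $\ell$-fold product must be nonzero in at least one factor. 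Third, for a graded ring of the form $K\oplus \bar R\oplus \bar S$ with $\bar R\cdot \bar S=0$ (the reduced wedge pattern), the same argument gives $\max\{\len(R),\len(S)\}$.

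Next I would invoke the pointwise classical decompositions, each of which is natural in the space and therefore lifts from single spaces to the image rings attached to an interval $[t,s]$. For the product, the Künneth theorem over a field yields a natural isomorphism of graded rings $\Hfunc^*(\X_t\times\Y_t)\cong \Hfunc^*(\X_t)\otimes_K\Hfunc^*(\Y_t)$; naturality in $t$ identifies the image ring of the product with $R\otimes_K S$, and lemma one finishes the product claim. For the disjoint union, $\Hfunc^*(\X_t\amalg \Y_t)\cong \Hfunc^*(\X_t)\times\Hfunc^*(\Y_t)$ naturally as graded rings, so the image ring is $R\times S$ and lemma two applies. For the wedge, using good basepoints (assumed for spaces in $\Top$), there is a natural ring isomorphism $\Hfunc^*(\X_t\vee \Y_t)\cong K\oplus\widetilde{\Hfunc}{}^*(\X_t)\oplus\widetilde{\Hfunc}{}^*(\Y_t)$ with vanishing cross cup products (since classes from each summand are supported away from the basepoint of the other); the image ring has the same shape and lemma three concludes.

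The main obstacle I anticipate is the first lemma, $\len(R\otimes_K S)=\len(R)+\len(S)$: the $\leq$ direction is delicate because a homogeneous element of $R\otimes_K S$ is generally a sum of simple tensors with varying bidegrees, so one has to argue that a nonzero $\ell$-fold product forces a simultaneous nonzero product in the $R$-factors and in the $S$-factors, which is where working over a field (already assumed throughout the paper) is essential. A secondary subtlety is handling non-closed interval types in Remark~\ref{rmk:image-ring}: I would simply apply the same proof after replacing $[t,s]$ with $[t-\delta,s+\delta]$ (or the appropriate one-sided variant) and taking the limiting image ring, since all the natural isomorphisms used above are compatible with the restriction maps.
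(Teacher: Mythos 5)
Your proposal is correct and follows essentially the same route as the paper: the paper likewise reduces pointwise to the image rings of an interval, invokes the natural isomorphisms $\Hfunc^*(\X\times\Y)\cong\Hfunc^*(\X)\otimes\Hfunc^*(\Y)$ and $\Hfunc^*(\X\amalg\Y)\cong\Hfunc^*(\X)\times\Hfunc^*(\Y)$ (treating the wedge the same way in positive degrees), and proves the two key length identities $\len(R\otimes S)=\len(R)+\len(S)$ and $\len(R\times S)=\max\{\len(R),\len(S)\}$, with the delicate tensor inequality handled exactly by your ``bidegree bookkeeping'' via a generating set of simple tensors (the paper's Prop.~\ref{prop:len_of_basis}). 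The only cosmetic difference is your reduced-cohomology formulation of the wedge case, which the paper folds into the product-of-rings case.
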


%%%%%%%%%%%%%%%%%%%%%%%%%%%%%%%%%%%%%%%%%%%%%%%%%%%%%%%%%%%%%%%%%%%%%%%%%%%%%%%%%%%%%%%%

\subparagraph*{Examples and visualization}

Each interval $\langle a,b\rangle$ in $\Int$ is visualized as a point $(a,b)$ in the half-plane above the diagonal (see Fig.~\ref{figure:interval-triangle}).
To visualize the persistent cup-length function of a filtration $\Xfunc$, we assign to each point $(a,b)$ the integer value $\cupprod(\Xfunc)(\langle a,b\rangle)$, if it is positive. If $\cupprod(\Xfunc)(\langle a,b\rangle)=0$ we do not assign any value. We present an example to demonstrate how persistent cup-length functions are visualized in the upper-diagonal plane (see Fig.~\ref{fig:klein-function-diagram}). 

\begin{figure}[H]
    \begin{tikzpicture}[scale=0.5]
    \begin{axis} [ 
    axis y line=middle, 
    axis x line=middle,
    ticks=none,
    xmin=-.5, xmax=3.5,
    ymin=-.5, ymax=3.5,]
    \addplot [mark=none] coordinates {(-.5,-.5) (3.5,3.5)};
    \addplot [mark=none,dashed] coordinates {(1,0) (1,1)};
    \addplot [mark=none,dashed] coordinates {(3,0) (3,3)};
    \addplot [thick,color=red!60!white,fill=red!50!white, 
                    fill opacity=0.45]coordinates {
            (1,3)
            (1,1)
            (3,3)  
            (1,3)};
    \node[mark=none] at (axis cs:1,-0.3){\huge $a$};
    \node[mark=none] at (axis cs:3,-0.3){\huge $b$};
    \node[mark=none] at (axis cs:1,3.3){\huge $(a,b)$};
    \end{axis}
    \end{tikzpicture}
 \caption{The interval $\langle a,b\rangle$ in $\Int$ corresponds to the point $(a,b)$ in $\R^2$. }
 \label{figure:interval-triangle}
 \end{figure}

\begin{example}[Visualization of $\cupprod(\cdot)$]
\label{ex:cup_l_func_klein} 
Recall the filtration $\mathbf{X}=\{\mathbb{X}_t\}_{t\geq0}$ of a Klein bottle with a 2-cell attached, defined in Fig.~\ref{fig:klein-function-diagram}. Consider the persistent cohomology $\Hfunc^*(\Xfunc)$ in $\Z_2$-coefficients. 
Let $v$ be the $0$-cocycle born at $t=0$, let $\alpha$ be the $1$-cocycle born at $t=1$ and died at $t=3$, and let $\beta$ be the $1$-cocycle 
born at time $t=2$. 
Let $\gamma:=\beta\smile\beta$, which is then a non-trivial $2$-cocycle born at time $t=2$, like $\beta$.
Then the barcodes of $\Xfunc$ are: $\caB_0(\Xfunc)=\{[0,\infty)\}$, $\caB_1(\Xfunc)=\{[1,3),[2,\infty)\}$, and $\caB_2(\Xfunc)=\{[2,\infty)\}$. See Fig.~\ref{fig:kleinfilt}. 

\begin{figure}[H]
%\centering
\begin{tikzcd}[column sep=tiny,row sep=0pt]
	\begin{tikzpicture}[scale=0.25]
    \node[fill=red,circle,inner sep=.4mm] at (1.4,5)  {};
\end{tikzpicture}
	& 
	\begin{tikzpicture}[scale=0.25]
	   \filldraw[color=red!40, fill=none, thick](2.5,5) ellipse (1.2 and 0.5);
    \node[fill=red,circle,inner sep=.4mm] at (1.4,5)  {};
\end{tikzpicture}
	& 
\begin{tikzpicture}[scale=0.18]
	\filldraw[color=red!40, fill=none, thick](2.7,9) ellipse (1 and 0.5);
    \node[fill=red,circle,inner sep=.4mm] at (1.8,9)  {};
    \Coordinate{innerr}{4.5,6} %middle point, do not change
    \Coordinate{innerl}{2.5,5}
    \Coordinate{intersectionr}{4.2,7} %middle point, do not change
    \Coordinate{intersectionl}{2.1,6} %middle point, do not change
    \Coordinate{bottoml}{2.2,3.3} %bottom point
    \Coordinate{bottomr}{7,4} %bottom point
    \Coordinate{left}{2,2} %left bottom boundary
    \Coordinate{right}{9.9,4.2} %right bottom boundary
        \draw (left) to[out=150,in=210,looseness=1.3] (intersectionl); 
        \draw[dashed] (intersectionl) to[out=30,in=200,looseness=0.6] (intersectionr);
    
        %control inner height (clock-wise)
        \draw (innerr) to[out=120,in=250,looseness=1] (4,10) to[out=70,in=180,looseness=1] (5.2,11) to[out=360,in=90,looseness=1] (7.5,9) to[out=270,in=20,looseness=1] (intersectionr);
        
        %control outer height
        \draw  (innerl) to[out=110,in=80,looseness=4] (right);
        \draw (innerl) to[out=315,in=315,looseness=0.5] (innerr); %eplise solid
        \draw[dashed] (innerl) to[out=135,in=135,looseness=0.5] node[below right,pos=0.5,black] {} (innerr); %eplise dash
        \draw (left) to[out=330,in=260,looseness=1.1] (right);
        \draw[dashed] (innerl) to[out=300,in=60,looseness=1] (bottoml);
        \draw[dashed] (innerr) to[out=300,in=150,looseness=1] (bottomr);
        \draw[dashed] (1.6,3) to [out=330,in=240,looseness=.5] 
        (bottoml) to [out=30,in=160,looseness=1.4] 
        (bottomr) to [out=340,in=190,looseness=.5]  (7.7,3.8) ;
\end{tikzpicture}
& \begin{tikzpicture}[scale=0.18]
	\filldraw[color=red!40, fill=red!10, thick](2.7,9) ellipse (1 and 0.5);
    \node[fill=red,circle,inner sep=.4mm] at (1.8,9)  {};
    \Coordinate{innerr}{4.5,6} %middle point, do not change
    \Coordinate{innerl}{2.5,5}
    \Coordinate{intersectionr}{4.2,7} %middle point, do not change
    \Coordinate{intersectionl}{2.1,6} %middle point, do not change
    \Coordinate{bottoml}{2.2,3.3} %bottom point
    \Coordinate{bottomr}{7,4} %bottom point
    \Coordinate{left}{2,2} %left bottom boundary
    \Coordinate{right}{9.9,4.2} %right bottom boundary
        \draw (left) to[out=150,in=210,looseness=1.3] (intersectionl); 
        \draw[dashed] (intersectionl) to[out=30,in=200,looseness=0.6] (intersectionr);
    
        %control inner height (clock-wise)
        \draw (innerr) to[out=120,in=250,looseness=1] (4,10) to[out=70,in=180,looseness=1] (5.2,11) to[out=360,in=90,looseness=1] (7.5,9) to[out=270,in=20,looseness=1] (intersectionr);
        
        %control outer height
        \draw  (innerl) to[out=110,in=80,looseness=4] (right);
        
        \draw (innerl) to[out=315,in=315,looseness=0.5] (innerr); %eplise solid
        \draw[dashed] (innerl) to[out=135,in=135,looseness=0.5] node[below right,pos=0.5,black] {} (innerr); %eplise dash
        \draw (left) to[out=330,in=260,looseness=1.1] (right);
        \draw[dashed] (innerl) to[out=300,in=60,looseness=1] (bottoml);
        \draw[dashed] (innerr) to[out=300,in=150,looseness=1] (bottomr);
        \draw[dashed] (1.6,3) to [out=330,in=240,looseness=.5] 
        (bottoml) to [out=30,in=160,looseness=1.4] 
        (bottomr) to [out=340,in=190,looseness=.5]  (7.7,3.8) ;
\end{tikzpicture}
&
\begin{tikzpicture}[scale=.7] 
    \begin{axis} [ 
%    %ticklabel style = {font=\small},
    height=4.5cm,
    width=7cm,
    hide y axis,
    axis x line*=bottom,
    xtick={1,2,3,4,5},
    xticklabels={$0$, $1$, $2$, $3$, $4$},
    xmin=.5, xmax=6.5,
    ymin=0, ymax=1.6,]
    \addplot [mark=none,thick] coordinates {(1.05,.2) (5.95,.2)};
    \addplot [mark=none,thick] coordinates {(2.05,.4) (3.95,.4)};
    \addplot [mark=none,thick] coordinates {(3.05,.6) (5.95,.6)};
    \addplot [mark=none,thick] coordinates {(3.05,.8) (5.95,.8)};
    \addplot [mark=*] coordinates {(1,.2)}  node[left] {$v$};
    \addplot [mark=*] coordinates {(3,.6)}  node[left] {$\beta$};
    \addplot [mark=*] coordinates {(2,.4)}  node[left] {$\alpha$};
    \addplot [mark=*] coordinates {(3,.8)}  node[left] {$\gamma$};
    \addplot [mark=o] coordinates {(4,.4)};
    \node[mark=none] at (axis cs:3.5,1.4){\large$\caB(\Xfunc)$};
    \end{axis}
    \end{tikzpicture}
\\
%\text{\small{$t< 0$}} & 
\text{\small{$t\in [0,1)$}} 
& \text{\small{$t\in [1,2)$}} 
& \text{\small{$t\in [2,3)$}} 
& \text{\small{$t\geq 3$}}
&
	\end{tikzcd}
\caption{The filtration $\Xfunc$ given in Fig.~\ref{fig:klein-function-diagram} and its barcode $\caB(\Xfunc)$, see Ex.~\ref{ex:cup_l_func_klein}.}
\label{fig:kleinfilt}
\end{figure}

Using the formula in Prop.~\ref{prop:generators-of-ring}, for any $t\leq s$, we have 
\begin{align*}
     \mathbf{Im}(\Hfunc^*(\mathbb{X}_s)\to\Hfunc^*(\mathbb{X}_t)) =\,&  
     \begin{cases}
    \langle[v]_t,[\beta]_t,[\gamma]_t\rangle , &\mbox{if $2\leq t< 3$ and $s\geq 3$}\\
    \langle[v]_t,[\alpha]_t,[\beta]_t,[\gamma]_t\rangle , &\mbox{if $2\leq t\leq s<3$}\\
    \langle[v]_t,[\alpha]_t\rangle , &\mbox{if $1\leq t<2$ and $s< 3$}\\
    \langle [v]_t\rangle , &\mbox{otherwise.} 
     \end{cases}
\end{align*}

The persistent cup-length function of $\mathbf{X}$ is computed as follows and visualized in Fig.~\ref{fig:klein-function-diagram}.
\[
  \pcupprod(\mathbf{X})([t,s]) = 
      \begin{cases}
       2 , & \text{if }t\geq 2 \\
       1 , & \text {if } 1\leq t<2 \text{ and } s<3\\
       0 , & otherwise.
      \end{cases}
\] 
\end{example}

We end this section by presenting an example, Ex.~\ref{example:samePH}, where the persistent cup-length function distinguishes a pair of filtrations which the total barcode is not able to. A similar example is available in %\cite[\S D.2]{contessoto2021persistent},
\textsection \ref{sec:erosion_T2_wedge}, 
where we will also give a quantitative measure
via the erosion distance on the difference between persistent cup-length functions of different filtrations. 

\begin{example}[$\cupprod(\cdot)$ better than standard barcode]
\label{example:samePH}
Consider the filtration $\Xfunc=\{\X_t\}_{t\geq0}$ of a $2$-torus $\T^2$ and the filtration $\Yfunc=\{\Y_t\}_{t\geq0}$ of the space $\mathbb{S}^1\vee \mathbb{S}^2\vee \mathbb{S}^1$ as shown in Fig.~\ref{fig:doublefilt}.
Knowing that $\X_3=\T^2$ and $\Y_3=\mathbb{S}^1\vee \mathbb{S}^2\vee \mathbb{S}^1$ have the same (co)homology vector spaces in all dimensions, one can directly check that the persistent (co)homology vector spaces associated to $\Xfunc$ and $\Yfunc$ are the same. 
However, the cohomology ring structure of $\X_3$ is different from that of $\Y_3$: there are two $1$-cocycles in $\X_3$ with a non-zero product (indeed the product is a $2$-cocycle), whereas all $1$-cocycles in $\Y_3$ have zero product.
This difference between the cohomology ring structures of these two filtration is quantified by their persistent cup-length functions, see Fig.~\ref{fig:doublefilt}. Also, see
%\cite[Ex.~54]{contessoto2021persistent} 
Ex.~\ref{ex:T2-wedge}
for a more geometric example, which considers the Vietoris-Rips filtrations of $\T^2$ and $\bbS^1\vee\bbS^2\vee\bbS^1$. 

\begin{figure}[H]
    %\centering
    \begin{tikzcd}[column sep=tiny,row sep=tiny]
    %\text{Filtration }\Xfunc:\vspace{12cm}&
    \begin{tikzpicture}[scale=0.45]
    \node[fill=red,circle,inner sep=.4mm] at (3,1.5)  {};
    \end{tikzpicture}
	& 
	\begin{tikzpicture}[scale=0.45]
	   \filldraw[color=red!40, fill=none, thick](3,1.75) ellipse (0.35 and .66);
    \node[fill=red,circle,inner sep=.4mm] at (3,1)  {};
    \end{tikzpicture}
	& 
	\begin{tikzpicture}[scale=0.35]
	   \filldraw[color=red!40, fill=none, thick](3,.95) ellipse (0.35 and .66);
    \node[fill=red,circle,inner sep=.4mm] at (3,0.25)  {};
    \Coordinate{left}{0,2}
    \Coordinate{right}{6,2}
    \Coordinate{leftinner}{1.5,2}
    \Coordinate{rightinner}{4.5,2}
        \draw (left) to[out=90,in=90,looseness=1] 
        (right) to [out=270,in=270,looseness=1] (left);
        \draw (leftinner) to[out=30,in=150,looseness=1] 
        (rightinner); 
        \draw (1.2,2.15) to[out=330,in=210,looseness=1] 
        (4.8,2.15); 
    \end{tikzpicture}
	\\
    %& 
    \text{\small{$t\in[0,1)$}} 
    & \text{\small{$t\in [1,2)$}} 
    & \text{\small{$t\geq 2$}} 
    \\
    %\text{Filtration } \Yfunc:&
    \begin{tikzpicture}[scale=0.45]
    \node[fill=red,circle,inner sep=.4mm] at (-3.5,0)  {};
    \end{tikzpicture}
	& 
	\begin{tikzpicture}[scale=0.35]
	   \filldraw[color=red!40, fill=none, thick](-2.5,0) circle (1);
    \node[fill=red,circle,inner sep=.4mm] at (-3.5,0)  {};
    \end{tikzpicture}
	& 
	\begin{tikzpicture}[scale=0.35]
	   \filldraw[color=red!40, fill=none, thick](-2.5,0) circle (1);
    \node[fill=red,circle,inner sep=.4mm] at (-3.5,0)  {};
    \Coordinate{left}{0,2}
    \Coordinate{right}{6,2}
    \Coordinate{leftinner}{1.5,2}
    \Coordinate{rightinner}{4.5,2}
    \filldraw[fill=none](0,0) circle (1.5);
    \filldraw[fill=none](2.5,0) circle (1);
    %\filldraw[fill=none](-2.5,0) circle (1);
    \draw [dashed](-1.5,0) to[out=90,in=90,looseness=.5] (1.5,0);
    \draw (1.5,0) to [out=270,in=270,looseness=.5] (-1.5,0);
    \end{tikzpicture}
    \end{tikzcd}
    \begin{tikzcd}[column sep=tiny,row sep=tiny]
    \begin{tikzpicture}[scale=0.45]
    \begin{axis} [ 
    % ticklabel style = {font=\LARGE},
    axis y line=middle, 
    axis x line=middle,
    ytick={1,2,3,4,5.3},
    yticklabels={$1$,$2$,$3$,$4$,$\infty$},
    xticklabels={$1$,$2$,$3$,$4$,$\infty$},
    xtick={1,2,3,4,5.3},
    xmin=0, xmax=5.5,
    ymin=0, ymax=5.5,]
    \addplot [mark=none] coordinates {(0,0) (5.3,5.3)};
    \addplot [thick,color=dgmcolor!20!white,fill=dgmcolor!20!white, 
                    fill opacity=0.45]coordinates {
            (1,5.3) 
            (1,1)
            (2,2)  
            (2,5.3)};
    \addplot [thick,color=dgmcolor!40!white,fill=dgmcolor!40!white, 
                    fill opacity=0.45]coordinates {
            (2,5.3) 
            (2,2)  
            (5.3,5.3)};
    \node[mark=none] at (axis cs:1.5,3.2){\huge\textsf{1}};
    \node[mark=none] at (axis cs:3.5,4){\huge\textsf{2}};
    \node[mark=none] at (axis cs:4,2){\huge$\cupprod(\Xfunc)$};
    \end{axis}
    \end{tikzpicture} 
    \\
    \begin{tikzpicture}[scale=0.45]
    \begin{axis} [ 
    % ticklabel style = {font=\LARGE},
    axis y line=middle, 
    axis x line=middle,
    ytick={1,2,3,4,5.3},
    yticklabels={$1$,$2$,$3$,$4$,$\infty$},
    xticklabels={$1$,$2$,$3$,$4$,$\infty$},
    xtick={1,2,3,4,5.3},
    xmin=0, xmax=5.5,
    ymin=0, ymax=5.5,]
    \addplot [mark=none] coordinates {(0,0) (5.3,5.3)};
    \addplot [thick,color=dgmcolor!20!white,fill=dgmcolor!20!white, 
                    fill opacity=0.45]coordinates {
            (1,5.3) 
            (1,1)  
            (5.3,5.3)};
    \node[mark=none] at (axis cs:2.5,3.5){\huge\textsf{1}};
    \node[mark=none] at (axis cs:4,2){\huge$\cupprod(\Yfunc)$};
    \end{axis}
    \end{tikzpicture}
	\end{tikzcd}
\caption{Top: A filtration $\Xfunc$ of  $\mathbb{T}^2$ and its persistent cup-length function $\cupprod(\Xfunc)$. Bottom: A filtration $\Yfunc$ of the wedge sum $\mathbb{S}^1\vee \mathbb{S}^2\vee \mathbb{S}^1$ and its persistent cup-length function $\cupprod(\Yfunc)$. See Ex.~\ref{example:samePH}.} 
\label{fig:doublefilt}
\end{figure}
\end{example}

\section{The persistent cup-length diagram}
\label{sec:comp_cup_l}
In this section, we introduce the notion of the 
\textit{persistent cup-length diagram of a filtration}, by using the cup product operation on  cocycles. In \textsection\ref{sec:cup_dgm}, we show how the persistent cup-length diagram is used to compute the persistent cup-length function (see Thm.~\ref{thm:tropical_mobius}). In \textsection \ref{sec:algorithms}, we develop an algorithm (see Alg.~\ref{alg:main}) to compute the persistent cup-length diagram, and study its complexity.
Proofs, details and extra examples are available in %\cite[\S C]{contessoto2021persistent}.
\textsection \ref{sec:details_persistent cup-length diagram}.

\subsection{Persistent cup-length diagram} 
\label{sec:cup_dgm}

We define the persistent cup-length diagram using a family of representative cocycles. 
In Thm.~\ref{thm:tropical_mobius} we show that the persistent cup-length function can be retrieved from the persistent cup-length diagram. 
See %\cite[\S C.1]{contessoto2021persistent}
\textsection \ref{sec:proof_cup_dgm} 
for the proofs of Thm.~\ref{thm:tropical_mobius} and details for this section.

\begin{definition} [$\ell$-fold $*_{\bsigma}$-product]
\label{dfn:ell-product}
Let $\bsigma$ be a family of representative cocycles for $\Hfunc^*(\Xfunc)$. Let $\ell\in\N^*$ and let $I_1,\dots,I_\ell$ be a sequence of elements in $\caB(\Xfunc)$ with representative cocycles $\sigma_{I_1},\dots,\sigma_{I_\ell}\in\bsigma$, respectively. 
We define the \textbf{$\ell$-fold $*_{\bsigma}$-product of $I_1,\cdots,I_\ell$} to be  
\begin{equation}\label{eq:support}
I_1 *_{\bsigma}\dots*_{\bsigma} I_\ell := \{t\in \R\mid [ \sigma_{I_1}]_t \smile\dots\smile [\sigma_{I_\ell}]_t \neq [0]_t\},
\end{equation}
associated with the formal representative cocycle $ \sigma_{I_1} \smile\dots\smile \sigma_{I_\ell}$.
We also call the right-hand side of Eq.~(\ref{eq:support}) the \textbf{support} of $\sigma_{I_1} \smile\dots\smile \sigma_{I_\ell}$, and denote it by $\operatorname{supp}(\sigma_{I_1} \smile\dots\smile \sigma_{I_\ell})$.
\end{definition}

The support of a product of representative cocycles is always an interval: %See page~\pageref{pf:support} for the proof.
\begin{restatable}[Support is an interval]
{proposition}{endsofsupport}\label{prop:support}
With the same assumption and notation in Defn.~\ref{dfn:ell-product}, let $I:=\operatorname{supp}(\sigma_{I_1} \smile\dots\smile \sigma_{I_\ell})$. If $I\neq \emptyset$, then $I$ is an interval $\langle b,d\rangle$, where $b\leq d$ are such that $d$ is the right end of $\cap_{1\leq i\leq \ell}I_i$ and $b$ is the left end of some $J\in \caB(\Xfunc)$ ($J$ is not necessarily one of the $I_i$).
\end{restatable}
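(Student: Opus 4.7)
The plan is to verify directly that $I := \operatorname{supp}(\sigma_{I_1} \smile \cdots \smile \sigma_{I_\ell})$ is an interval with the described endpoints by combining three observations: (i) $I \subseteq \bigcap_{i} I_i$; (ii) $I$ is upward-closed inside $\bigcap_i I_i$; and (iii) the left endpoint of $I$ is identified via the interval decomposition of $\Hfunc^{*}(\Xfunc)$.

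For (i), if $t \notin I_{i_0}$ for some $i_0$, then by the defining property that $\{[\sigma_J]_t\}_{t \in J \in \caB_p}$ forms a basis of $\Hfunc^p(\X_t)$, the class $[\sigma_{I_{i_0}}]_t$ vanishes, and hence so does the entire cup product. For (ii), given $t \leq s$ both in $\bigcap_i I_i$, naturality of the cup product with respect to the restriction $\Hfunc^*(\X_s) \to \Hfunc^*(\X_t)$ sends the product at time $s$ to the product at time $t$; since zero maps to zero, non-vanishing at $t$ forces non-vanishing at $s$. Combining (i) and (ii), $I$ is upward-closed inside $\bigcap_i I_i$, so whenever non-empty it has the form $\langle b, d \rangle$ with $d$ equal to the right endpoint of $\bigcap_i I_i$.

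For (iii), set $p := p_1 + \cdots + p_\ell$ and invoke the interval decomposition $\Hfunc^{p}(\Xfunc) \cong \bigoplus_{J \in \caB_p(\Xfunc)} V_J$, in which the basis section of $V_J$ at each $t \in J$ is $[\sigma_J]_t$. The persistent element $[\tau] := [\sigma_{I_1}] \smile \cdots \smile [\sigma_{I_\ell}]$ decomposes as $[\tau] = \sum_{J} c_J \cdot [\sigma_J]$ with constants $c_J \in K$. Since $[\tau]_t = 0$ for every $t$ outside $\bigcap_i I_i$ by (i), and the classes $\{[\sigma_J]_t\}_{J \ni t}$ are linearly independent at every $t$, any $J$ with $c_J \neq 0$ must satisfy $J \subseteq \bigcap_i I_i$. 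Thus $I = \bigcup_{c_J \neq 0} J$ is a union of sub-intervals of $\bigcap_i I_i$; upward-closedness from (ii) forces these sub-intervals to align so that $I$ is a single interval with right endpoint $d$. Its left endpoint $b$ is therefore either the smallest of the left endpoints of the $J$'s with $c_J \neq 0$, in which case $b$ is the left endpoint of such a $J \in \caB(\Xfunc)$, or else it coincides with the left endpoint of $\bigcap_i I_i$, namely the left endpoint of whichever $I_{i_0}$ achieves $\max_i b_i$, which itself lies in $\caB(\Xfunc)$.

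The main obstacle I anticipate is justifying the constant-coefficient decomposition $[\tau] = \sum_J c_J [\sigma_J]$ in step (iii), which relies on $\Hfunc^p(\Xfunc)$ being pointwise finite-dimensional and hence interval-decomposable. A secondary nuisance is the bookkeeping of the interval type of $\langle b, d \rangle$ across the four possible combinations of open/closed endpoints; this can be handled uniformly by tracking the definition of $[\sigma_I]_t$ at the endpoints.
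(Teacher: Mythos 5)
Your steps (i) and (ii) coincide with the paper's argument: containment of the support in $\bigcap_i I_i$ together with upward-closedness under the restriction maps gives that $I$ is an interval whose right end is the right end of $\bigcap_i I_i$. The problem is in step (iii). The family $\big([\tau]_t\big)_{t\in\R}$ is \emph{not} a section of the persistence module $\Hfunc^p(\Xfunc)$: for $s$ past the death time of some factor $\sigma_{I_i}$ one has $[\tau]_s=0$ by the convention on $[\sigma_{I_i}]_s$, while $[\tau]_t$ may be nonzero for $t<s$, so compatibility with the restriction maps $\Hfunc^p(\X_s)\to\Hfunc^p(\X_t)$ fails beyond the right end $d$ of $\bigcap_i I_i$. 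Consequently the asserted decomposition $[\tau]=\sum_J c_J[\sigma_J]$ with globally constant coefficients is false, and so are the deductions that every $J$ with $c_J\neq 0$ satisfies $J\subseteq\bigcap_i I_i$ and that $I=\bigcup_{c_J\neq0}J$. The pinched Klein bottle of Ex.~\ref{ex:example_dgm} is a counterexample: for $\sigma_{I_1}=\alpha$ with $I_1=[1,3)$ and $\sigma_{I_2}=\beta$ with $I_2=[2,\infty)$, the product equals $[\gamma]_t\neq 0$ for $t\in[2,3)$ and $0$ for $t\geq 3$, so the unique contributing bar is $J=[2,\infty)$, which is not contained in $I_1\cap I_2=[2,3)$, and $\bigcup_{c_J\neq0}J=[2,\infty)\neq[2,3)=I$.

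The conclusion about the left endpoint is nevertheless true, and two repairs are available. You can carry out your decomposition only on the ray $t\leq d$, where the coefficients are genuinely constant and the contributing bars are exactly those $J$ containing $[t,d]$ for some $t$; then $I=\bigcup_{c_J\neq0}\big(J\cap\langle-\infty,d]\big)$, a union of intervals sharing the right end $d$, so the left end of $I$ is the left end of some $J\in\caB(\Xfunc)$. Alternatively --- this is what the paper does --- decompose only at the single time $b:=$ left end of $I$, writing $[\tau]_b=\sum_J\lambda_J[\sigma_J]_b$ with all $[\sigma_J]_b\neq0$, restrict to $b-\epsilon$ where $[\tau]_{b-\epsilon}=0$, and use linear independence of the surviving classes to conclude $[\sigma_J]_{b-\epsilon}=0$ for every contributing $J$; hence each such $J$ has left end exactly $b$. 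Either way, the global constant-coefficient claim must be abandoned.
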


The $*_{\bsigma}$-product is associative and invariant 
under permutations. %see Prop.~\ref{prop:propertiy_of_*_sigma}. 
Let $\caB_{\geq 1}(\Xfunc):=\sqcup_{p\geq 1} \caB_p(\Xfunc)$.
Let $\caB_{\geq 1}(\Xfunc)^{*_{\bsigma}\ell}$ be the set of $I_1*_{\bsigma}\dots *_{\bsigma}I_\ell$ where each $I_i\in \caB_{\geq 1}(\Xfunc)$. For the simplicity of notation, we often write $\caB_{\geq 1}(\Xfunc)^{*_{\bsigma}\ell}$ as $\caB(\Xfunc)^{*_{\bsigma}\ell}$.

%%%%%%%%%%%%%%%%%%%%%%%%%%%%%%%%%%%

\begin{definition}[persistent cup-length diagram]
\label{def:cup_dgm}
Let $\Xfunc$ be a filtration and let $\caB_{\geq1}(\Xfunc)$ be its barcode over positive dimensions.
Let $\bsigma=\{\sigma_I\}_{I\in\caB_{\geq1}(\Xfunc)}$ be a family of representative cocycles for $\Hfunc^{\geq1}(\Xfunc)$.
The \textbf{persistent cup-length diagram of $\Xfunc$ (associated to $\bsigma$)} is defined to be the map $\mathbf{dgm}_{\bsigma}^{\smile}(\Xfunc):\Int\to\N$, given by:
\[\mathbf{dgm}_{\bsigma}^{\smile}(\Xfunc)(I):= 
    \max\{\ell\in\N^*\mid I=I_1*_{\bsigma}\dots *_{\bsigma}I_\ell\text{, where each }I_i\in\caB_{\geq1}(\Xfunc)\},\]
with the convention that $\max\emptyset=0.$
\end{definition}

Recall Thm.~\ref{thm:tropical_mobius}, which states the relation between the persistent cup-length function $\cupprod(X)$ and the persistent cup-length diagram $\mathbf{dgm}_{\bsigma}^{\smile}(\Xfunc)$: for any interval $\langle a,b\rangle$, the $\cupprod(\Xfunc)(\langle a,b\rangle)$ attains the \textbf{maximum} value of $\mathbf{dgm}_{\bsigma}^{\smile}(\Xfunc)(\langle c,d\rangle)$ over all intervals $\langle c,d\rangle\supseteq\langle a,b\rangle$.
This is in the same spirit as in \cite{patel2018generalized} where the rank function can be reconstructed from  the persistence diagram by replacing `max' operation with the \textbf{sum} operation.

\begin{example} [Example of $\mathbf{dgm}_{\bsigma}^{\smile}(\cdot)$ and Thm.~\ref{thm:tropical_mobius}] 
\label{ex:example_dgm}
Recall the filtration $\mathbf{X}=\{\mathbb{X}_t\}_{t\geq0}$ of the pinched Klein bottle defined in Fig.~\ref{fig:klein-function-diagram}, and its persistent cup-length function and the representative cocycles $\{\alpha,\beta,\gamma\}=:\bsigma$ from Ex.~\ref{ex:cup_l_func_klein}. 
Because $\Hfunc^*(\Xfunc)$ is non-trivial up to dimension $2$, $\mathbf{dgm}_{\bsigma}^{\smile}(\Xfunc)(I)\leq 2$ for any $I$. It follows from $[\alpha\smile\alpha]=0$ that $\mathbf{dgm}_{\bsigma}^{\smile}(\Xfunc)([1,3))=1$, and from $[\alpha\smile\beta]=[\gamma]$ that $[2,3)=[1,3)*_{\bsigma}[2,\infty)$, implying $\mathbf{dgm}_{\bsigma}^{\smile}(\Xfunc)([2,3))=2$. A similar argument holds for $[2,\infty)$, using the fact that $[\beta\smile\beta]=[\gamma]$. 
Thus, we obtain the persistent cup-length diagram $\mathbf{dgm}_{\bsigma}^{\smile}(\mathbf{X})$ as below (see the right-most figure in Fig.~\ref{fig:klein-function-diagram} for its visualization):
\[
\mathbf{dgm}_{\bsigma}^{\smile}(\Xfunc)(I)= 
      \begin{cases}
    1, & \mbox{ if } I=[1,3) \\
    2, & \mbox{ if } I=[2,3) \text { or } I=[2,\infty)\\
    0, & \mbox{otherwise.} 
     \end{cases}
\]
Applying Thm.~\ref{thm:tropical_mobius}, we obtain the persistent cup-length function $\cupprod(\Xfunc)$ shown in the middle figure of Fig.~\ref{fig:klein-function-diagram}.
\end{example}

See %\cite[\S C.1]{contessoto2021persistent}
\textsection \ref{sec:proof_cup_dgm} 
for the proof of Thm.~\ref{thm:tropical_mobius} and more examples of persistent cup-length diagrams.
It is worth noticing that the persistent cup-length diagram depends on the choice of the family of representative cocycles $\bsigma$, see Ex.~\ref{ex:dependence on sigma} below. 

\begin{example}[$\mathbf{dgm}_{\bsigma}^{\smile}(\Xfunc)$ depends on $\bsigma$]
\label{ex:dependence on sigma}
Let $\mathbb{RP}^2$ be the real projective plane. Consider the filtration $\Xfunc$ of the $2$-skeleton $S_2(\mathbb{RP}^2\times \mathbb{RP}^2)$ of the product space $\mathbb{RP}^2\times \mathbb{RP}^2$, given by:
\[\begin{tikzcd}[row sep=0pt]
\Xfunc:
& 
\color{red}\bullet \ar[r,hook]
&
{\color{red}\mathbb{RP}^2} \ar[r,hook]
	& 
{\color{red}\mathbb{RP}^2}\vee {\color{blue}\mathbb{RP}^2} \ar[r,hook]
	& 
S_2({\color{red}\mathbb{RP}^2}\times {\color{blue}\mathbb{RP}^2})
\\
&\text{\small{$t\in [0,1)$}} 
& \text{\small{$t\in [1,2)$}} 
& \text{\small{$t\in [2,3)$}}
& \text{\small{$t\geq 3$}}
	\end{tikzcd}\]
Let $\alpha$ be the $1$-cocycle born at $t=1$, and $\beta$ be the $1$-cocycles born at $t=2$ when the second copy of $\mathbb{RP}^2$ appears. 
See Fig.~\ref{fig:rp2} for two choices of representative cocycles $\bsigma$ and $\btau$ for $\caB_{\geq1}(\Xfunc)$, where these two choices only differ by the first dimensional cocycles associated with the bar $[1,\infty)$. 
For a detailed explanation of the cohomology rings of the above spaces, see 
%\cite[\S C.1]{contessoto2021persistent}. 
\S \ref{app:computation for RP2}.

\begin{figure}[H]
\centering
\begin{tikzpicture}[scale=.8] 
    \begin{axis} [ 
%    %ticklabel style = {font=\small},
    %height=4.5cm,
    width=7cm,
    hide y axis,
    axis x line*=bottom,
    xtick={0,1,2,3,4},
    xticklabels={$0$, $1$, $2$, $3$, $4$},
    xmin=-1, xmax=5.5,
    ymin=0, ymax=1.6,]
    \addplot [mark=none] coordinates {(1,.2)}  node[left] {$\alpha$};
    \addplot [mark=none,thick] coordinates {(1.05,.2) (5.95,.2)};
    
    \addplot [mark=none] coordinates {(2,.4)}  node[left] {$\beta$};
    \addplot [mark=none,thick] coordinates {(2.05,.4) (5.95,.4)};
    
    \addplot [mark=none] coordinates {(1,.6)}  node[left] {$\alpha^{\smile 2}$};    
    \addplot [mark=none,thick] coordinates {(1.05,.6) (5.95,.6)};
    
    \addplot [mark=none] coordinates {(2,.8)}  node[left] {$\beta^{\smile 2}$};    
    \addplot [mark=none,thick] coordinates {(2.05,.8) (5.95,.8)};
    
    \addplot [mark=none] coordinates {(3,1)}  node[left] {$\alpha\smile \beta$};    
    \addplot [mark=none,thick] coordinates {(3.05,1) (5.95,1)};
    
    \node[mark=none] at (axis cs:3.5,1.3){\large$\bsigma$};
    \end{axis}
    \end{tikzpicture}
    \hspace{3em}
\begin{tikzpicture}[scale=.8] 
    \begin{axis} [ 
%    %ticklabel style = {font=\small},
    %height=4.5cm,
    width=7cm,
    hide y axis,
    axis x line*=bottom,
    xtick={0,1,2,3,4},
    xticklabels={$0$, $1$, $2$, $3$, $4$},
    xmin=-1.5, xmax=5.5,
    ymin=0, ymax=1.6,]
    \addplot [mark=none] coordinates {(1,.2)}  node[left] {$\alpha+\beta$};
    \addplot [mark=none,thick] coordinates {(1.05,.2) (5.95,.2)};
    
    \addplot [mark=none] coordinates {(2,.4)}  node[left] {$\beta$};
    \addplot [mark=none,thick] coordinates {(2.05,.4) (5.95,.4)};
    
    \addplot [mark=none] coordinates {(1,.6)}  node[left] {$\alpha^{\smile 2}$};    
    \addplot [mark=none,thick] coordinates {(1.05,.6) (5.95,.6)};
    
    \addplot [mark=none] coordinates {(2,.8)}  node[left] {$\beta^{\smile 2}$};    
    \addplot [mark=none,thick] coordinates {(2.05,.8) (5.95,.8)};
    
    \addplot [mark=none] coordinates {(3,1)}  node[left] {$\alpha\smile \beta$};    
    \addplot [mark=none,thick] coordinates {(3.05,1) (5.95,1)};
    
    \node[mark=none] at (axis cs:3.5,1.3){\large$\btau$};
    \end{axis}
    \end{tikzpicture}
\caption{Two choices of representative cocycles for the filtration $\Xfunc$ given by Ex.~\ref{ex:dependence on sigma}.} 
\label{fig:rp2}
\end{figure}

To obtain the cup-length diagram, we first compute $\caB(\Xfunc)^{*\bsigma 2}$ and $\caB(\Xfunc)^{*\btau 2}$:
\begin{figure}[H]
\centering
\begin{tikzpicture}[scale=.8] 
    \begin{axis} [ 
%    %ticklabel style = {font=\small},
    %height=4.5cm,
    width=7cm,
    hide y axis,
    axis x line*=bottom,
    xtick={0,1,2,3,4},
    xticklabels={$0$, $1$, $2$, $3$, $4$},
    xmin=-1, xmax=5.5,
    ymin=0.4, ymax=1.6,]
    
    \addplot [mark=none] coordinates {(1,.6)}  node[left] {$\alpha^{\smile 2}$};    
    \addplot [mark=none,thick] coordinates {(1.05,.6) (5.95,.6)};
    
    \addplot [mark=none] coordinates {(2,.8)}  node[left] {$\beta^{\smile 2}$};    
    \addplot [mark=none,thick] coordinates {(2.05,.8) (5.95,.8)};
    
    \addplot [mark=none] coordinates {(3,1)}  node[left] {$\alpha\smile \beta$};    
    \addplot [mark=none,thick] coordinates {(3.05,1) (5.95,1)};
    
    \node[mark=none] at (axis cs:3.5,1.3){\large$\caB(\Xfunc)^{*\bsigma 2}$};
    \end{axis}
    \end{tikzpicture}
    \hspace{3em}
\begin{tikzpicture}[scale=.8] 
    \begin{axis} [ 
%    %ticklabel style = {font=\small},
    %height=4.5cm,
    width=7cm,
    hide y axis,
    axis x line*=bottom,
    xtick={0,1,2,3,4},
    xticklabels={$0$, $1$, $2$, $3$, $4$},
    xmin=-2, xmax=5.5,
    ymin=0.4, ymax=1.6,]
    
    \addplot [mark=none] coordinates {(1,.6)}  node[left] {$(\alpha+\beta)^{\smile 2}$};    
    \addplot [mark=none,thick] coordinates {(1.05,.6) (5.95,.6)};
    
    \addplot [mark=none] coordinates {(2,.8)}  node[left] {$\beta^{\smile 2}$};    
    \addplot [mark=none,thick] coordinates {(2.05,.8) (5.95,.8)};
    
    \addplot [mark=none] coordinates {(2,1)}  node[left] {$(\alpha+\beta)\smile \beta$};    
    \addplot [mark=none,thick] coordinates {(2.05,1) (5.95,1)};
    
    \node[mark=none] at (axis cs:3.5,1.3){\large$\caB(\Xfunc)^{*\btau 2}$};
    \end{axis}
    \end{tikzpicture}
%\caption{Two choices of representative cocycles for the filtration $\Xfunc$ given by Ex.~\ref{ex:supp v.s. intersection}.} 
\end{figure}

By Defn.~\ref{def:cup_dgm}, the persistent cup-length diagram associated to $\bsigma$ and $\btau$ are (see Fig.~\ref{fig:two-diagram}):
    \begin{figure}[H]
    \centering
    \begin{tikzpicture}[scale=0.65]
    \begin{axis} [ 
    %ticklabel style = {font=\small},
    axis y line=middle, 
    axis x line=middle,
    ytick={1,2,3,4,5.3},
    yticklabels={$1$,$2$,$3$,$4$,$\infty$},
    xticklabels={$1$,$2$,$3$,$4$,$\infty$},
    xtick={1,2,3,4,5.3},
    xmin=0, xmax=5.5,
    ymin=0, ymax=5.5,]
    \addplot [mark=none] coordinates {(0,0) (5.3,5.3)};
   \addplot[dgmcolor!60!white,mark=*] (1,5.3) circle (2pt) node[below,black]{\textsf{2}};
   \addplot[dgmcolor!100!white,mark=*] (2,5.3) circle (2pt) node[below,black]{\textsf{2}};
    \addplot[dgmcolor!100!white,mark=*] (3,5.3) circle (2pt) node[below,black]{\textsf{2}};
    \node[mark=none] at (axis cs:4,2){\Large $\mathbf{dgm}_{\bsigma}^{\smile}(\Xfunc)$};
    \end{axis}
    \end{tikzpicture}
    \hspace{1.5cm}
    \begin{tikzpicture}[scale=0.65]
    \begin{axis} [ 
    %ticklabel style = {font=\small},
    axis y line=middle, 
    axis x line=middle,
    ytick={1,2,3,4,5.3},
    yticklabels={$1$,$2$,$3$,$4$,$\infty$},
    xticklabels={$1$,$2$,$3$,$4$,$\infty$},
    xtick={1,2,3,4,5.3},
    xmin=0, xmax=5.5,
    ymin=0, ymax=5.5,]
    \addplot [mark=none] coordinates {(0,0) (5.3,5.3)};
    \addplot[dgmcolor!60!white,mark=*] (1,5.3) circle (2pt) node[below,black]{\textsf{2}};
   \addplot[dgmcolor!100!white,mark=*] (2,5.3) circle (2pt) node[below,black]{\textsf{2}};
   \node[mark=none] at (axis cs:4,2){\Large $\mathbf{dgm}_{\btau}^{\smile}(\Xfunc)$};
    \end{axis}
    \end{tikzpicture}
    \caption{The persistent cup-length diagrams $\mathbf{dgm}_{\bsigma}^{\smile}(\Xfunc)$ (left) and $\mathbf{dgm}_{\btau}^{\smile}(\Xfunc)$ (right), see Ex.~\ref{ex:dependence on sigma}.}
    \label{fig:two-diagram}
     \end{figure}
\end{example}

\medskip

See %\cite[\S C.2]{contessoto2021persistent}
page~\pageref{Supplement-examp} in \textsection\ref{Supplement-examp} 
for more examples of persistent cup-length diagrams.  
In the next section, we develop an algorithm for computing the persistent cup-length diagram $\mathbf{dgm}_{\bsigma}^{\smile}(\Xfunc)$, which can be used to compute the persistent cup-length function $\cupprod(\Xfunc)$ due to Thm.~\ref{thm:tropical_mobius}.

%%%%%%%%%%%%%%%%%%%%%%%%%%%%%%%%%%%%%%%%%%%%%%%%%%%%%%%%%%%%%%%%%%%%%%%%%%%%%%%%%%%%%%%%%
\subsection{An algorithm for computing the persistent cup-length diagram over $\Z_2$}
\label{sec:algorithms}

Let $\Xfunc:\X_1\hookrightarrow\dots\hookrightarrow \X_N(=\X)$ be a finite filtration of a finite simplicial complex $\X$. Suppose that the barcode over positive dimensions $\caB:=\caB_{\geq 1}(\Xfunc)$ and a family of representative cocycles $\bsigma:=\{\sigma_I\}_{I\in \caB}$ are given. Because a finite filtration has only finitely many critical values, we assume that all intervals in the barcode are closed at the right end. If not, we replace the right end of each such interval with its closest critical value
to the left. Since each interval is considered together with a representative cocycle in this section, we will abuse the notation and write $\caB$ for the set $\{(I,\sigma_I)\}_{I\in \caB}$ as well. Let $\caB^{*_{\bsigma}\ell}$ be the set of $I_1*_{\bsigma}\dots *_{\bsigma}I_\ell$ where each $I_i\in \caB$. We compute $\left\{ \caB^{*_{\bsigma}\ell} \right \}_{\ell\geq 1}$ using: \label{para:overall_idea}

\RestyleAlgo{plain}
\begin{algorithm}[H]
\label{alg:main-simple}
\While{$\caB^{*_{\bsigma}\ell}\neq \emptyset$}{
    \For{$(I_1, \sigma_1)\in \caB$ and $(I_2, \sigma_2)\in \caB^{*_{\bsigma}\ell}$}{
        \If{$I_1*_{\bsigma} I_2\neq \emptyset$ \label{line:while-if-simple}}{
    Append $(I_1*_{\bsigma} I_2,\sigma_1\smile\sigma_2)$ to $\caB^{*_{\bsigma}(\ell+1)}$
    }}}
\end{algorithm}

The \textbf{line \ref{line:while-if-simple}} involves the computation of $\operatorname{supp}(\sigma_1\smile\sigma_2)$ which is some interval $[b_\sigma,d_\sigma]$ for $1\leq b_\sigma\leq d_\sigma\leq m$ such that $d_\sigma$ is simply the right end of $I_1\cap I_2$ and $b_\sigma$ is the left end of some $I\in \caB$, by Prop.~\ref{prop:support}. The computation of $b_\sigma$ is broken down in two steps: (1) compute the cup product (at cochain level) $\sigma:=\sigma_1\smile \sigma_2$, and (2) find $b_\sigma$ as the smallest $i\leq d_\sigma$ such that $\sigma|_{C_*(\X_i)}$ is not a coboundary.
Step (1) is already addressed by Alg.~\ref{alg:cup_product} on page~\pageref{alg:cup_product}. Let us now introduce an algorithm to address Step (2).

\subsection{Checking whether a cochain is a coboundary} 
As before, we assume a total order (e.g. the order given in \cite{bauer2021ripser}) on the simplices, and denote the ordered simplices by $S=\{\alpha_1<\dots<\alpha_m\}$, where $m$ is the number of simplices. We adopt the reverse ordering for the set of cosimplices $S^*:=\{\alpha_m^*<\dots<\alpha_1^*\}.$ Notice that 
$S^*$ forms a basis for the linear space of cochains. A $p$-cochain $\sigma$ is written as a linear sum of elements in $S^*$ uniquely. If $\alpha_j^*$ appears as a summand for $\sigma$, we denote $\alpha_j\in \sigma$.

Let $A$ be the coboundary matrix associated to the ordered basis $S^*$. 
Assume that \fbox{$R=AV$} is the reduced matrix of $A$ obtained from left-to-right column operations, given by the upper triangular matrix $V$. As a consequence, the pivots $\mathrm{Pivots}(R )$ of columns of $R$ are unique. Using all $i$-th row for $i\in \mathrm{Pivots}(R )$, we do bottom-to-top row reduction on $R$: \fbox{$U  R =\Lambda$}, such that $\Lambda $ has at most one non-zero element in each row and column, and $U $ is an upper triangular matrix. See %\cite[Alg.~3]{contessoto2021persistent}
Alg.~\ref{alg:row_reduction} on page~\pageref{sec:supp_algorithm} 
for the row reduction algorithm  (with complexity $O(m^2)$), which outputs the matrix $U$.
The following proposition allows us to use the row reduction matrix $U$ and $\mathrm{Pivots}(R)$ to check whether a cochain is a coboundary. 
For the proof of Prop.~\ref{prop:is_coboundary}, see %\cite[\S C.3]{contessoto2021persistent}. 
\textsection \ref{sec:supp_algorithm}. 

\begin{restatable}{proposition}{iscoboundary} \label{prop:is_coboundary}
Given a $p$-cochain $\sigma$, let $y\in \Z_2^{m}$ be such that $\sigma=S^*\cdot y$. Let $R$ be the column reduced coboundary matrix, $U$ be the row reduction matrix of $R$. Then $\sigma$ is a coboundary, iff $\{i:\text{the i-th row of }(U\cdot y)\neq 0\}\subset   \mathrm{Pivots}(R)$.
\end{restatable}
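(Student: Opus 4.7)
By definition, $\sigma = S^* \cdot y$ is a coboundary iff $y$ lies in the column span of the coboundary matrix $A$ (here I identify cochains with their coordinate vectors in the ordered basis $S^*$). The plan is to translate this membership question, via the two invertible transformations $V$ and $U$, into an easy-to-check condition on $U \cdot y$.

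First I would note that both $V$ and $U$ are upper triangular matrices over $\Z_2$ with $1$s on the diagonal (they are products of elementary left-to-right column / bottom-to-top row operations), hence invertible. Consequently, since $R = AV$, the column span of $A$ equals the column span of $R$, so $\sigma$ is a coboundary iff $y \in \operatorname{colspan}(R)$. Applying the invertible matrix $U$ on the left, this is equivalent to $U y \in \operatorname{colspan}(UR) = \operatorname{colspan}(\Lambda)$.

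Next I would describe $\operatorname{colspan}(\Lambda)$ explicitly. By construction, for each non-zero column of $R$ there is a unique pivot index $i \in \mathrm{Pivots}(R)$ (the lowest non-zero entry), and the bottom-to-top row reduction uses precisely these pivot rows to clear all other entries in their columns. Hence $\Lambda$ has at most one non-zero entry per row and per column, located at positions $(i,j)$ with $i \in \mathrm{Pivots}(R)$, and all other entries (in particular every row indexed outside $\mathrm{Pivots}(R)$) are zero. Therefore
\[
\operatorname{colspan}(\Lambda) \;=\; \operatorname{span}\bigl\{ e_i : i \in \mathrm{Pivots}(R)\bigr\}.
\]
Combining, $\sigma$ is a coboundary iff $Uy$ lies in this coordinate subspace, i.e.\ iff every non-zero row index of $Uy$ belongs to $\mathrm{Pivots}(R)$, which is exactly the stated condition.

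The only delicate step is verifying the shape of $\Lambda$ claimed above: one must check that bottom-to-top row reductions on a column-reduced matrix cannot destroy the pivot pattern and cannot introduce new non-zero entries outside the pivot rows. This follows because each row operation adds a pivot row (indexed by some $i \in \mathrm{Pivots}(R)$) to a strictly higher row, which (i) cannot affect the pivot row itself, (ii) cannot produce a non-zero entry in a row index outside $\mathrm{Pivots}(R)$ other than those already present in $R$ and subsequently cleared, and (iii) preserves invertibility and upper-triangularity of the accumulated $U$. Once this book-keeping is carried out, the equivalence in Prop.~\ref{prop:is_coboundary} follows directly from the chain of equalities above.
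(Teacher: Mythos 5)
Your proposal is correct and follows essentially the same route as the paper's proof: both reduce the coboundary question to membership of $y$ in the column span of $A$, pass to $Uy\in\operatorname{colspan}(UR)=\operatorname{colspan}(\Lambda)$ via the invertibility of $V$ and $U$, and read off the condition from the fact that $\Lambda$ is supported only on the pivot rows of $R$. Your extra book-keeping on the shape of $\Lambda$ is a slightly more explicit version of what the paper leaves implicit in its final equivalence.
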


\subparagraph*{Using the boundary matrix.} We can also use the boundary matrix to check whether a cocycle is a coboundary, see %\cite[page~28]{contessoto2021persistent}.
page~\pageref{para:boundary_reduction}. 
Using the boundary matrix, only column reduction is needed, while for the coboundary matrix both column reduction and row reduction are performed. However, it has been justified in \cite{bauer2021ripser} that reducing the coboundary matrix is more efficient than reducing the boundary matrix. Combined with the fact that the row reduction step does not increase the computation complexity of computing the persistent cup-length diagram, we will use the coboundary matrix in this paper.

\subsection{Main algorithm and its complexity}\label{sec:main_algo}

Motivated by the goal of obtaining a practical algorithm, and in order to control the complexity, we consider truncating filtrations up to a user specified dimension bound.

\subparagraph*{Truncation of a filtration.} \label{para:truncation} Fix a positive integer $\udim$. Given a filtration $\Xfunc:\X_1\hookrightarrow\dots\hookrightarrow \X_N(=\X)$ of a finite simplicial complex $\X$, let
$\X_i^{\udim+1}$ be the $(\udim+1)$-skeleton of $\X_i$ for each $i$. 
The \emph{$(\udim+1)$-dimensional truncation} of $\Xfunc$ is the filtration $\Xfunc^{\udim+1}:\X_1^{\udim+1}\hookrightarrow\dots\hookrightarrow \X_N^{\udim+1}$. 
Since $\Hfunc^{\leq \udim}(\Y)\cong\Hfunc^{\leq \udim}(\Y^{\udim+1})$ (as vector spaces) for any simplicial complex $\Y$, we conclude that $\Hfunc^{\leq \udim}(\Xfunc)\cong\Hfunc^{\leq \udim}\left(\Xfunc^{\udim+1}\right)$ as persistent vector spaces. Thus, the barcode $\caB(\Xfunc^{\udim+1})$ of $\Xfunc^{\udim+1}$ is equal to the barcode $\caB_{\leq \udim}(\Xfunc):=\sqcup_{1\leq p\leq k} \caB_p(\Xfunc)$. 

Let $\caB_{[1,k]}(\Xfunc):=\sqcup_{1\leq p\leq k} \caB_p(\Xfunc)$.
We introduce Alg.~\ref{alg:main} to compute the persistent cup-length diagram for the $(\udim+1)$-dimensional truncation of $\Xfunc$ over $\Z_2$, whose time complexity is described in terms of the variables below: 
\begin{itemize}
    \item $\udim$ is a dimension bound used to truncate the filtration;\label{para:parameter}
    \item $ m_\udim $ is the number of simplices with positive dimension in the $(\udim+1)$-skeleton $\X^{\udim+1}$ of $\X$;
    \item $c_\udim$ is the complexity of checking whether a simplex is alive at a given filtration parameter;
    \item $q_{k-1} :=\max_{1\leq \ell\leq k -1}\card\left(\left(\caB_{[1,k]}(\Xfunc)\right)^{*_{\bsigma}\ell}\right)$ (see Defn.~\ref{dfn:ell-product}). In particular, $q_1=\card\left(\caB_{[1,k]}(\Xfunc)\right).$
\end{itemize}

\begin{algorithm}%[H]
\SetKwData{Left}{left}\SetKwData{This}{this}\SetKwData{Up}{up}
\SetKwInOut{Input}{Input}\SetKwInOut{Output}{Output}
\Input{A dimension bound $\udim$, the ordered list of cosimplices $S^*$ from dimension $1$ to $\udim+1$, the column reduced coboundary matrix $R$ from dimension $1$ to $\udim+1$, and barcodes (annotated by representative cocycles) from dimension $1$ to $\udim$: $\mathcal{B}_{[1,\udim]}=\{(b_\sigma,d_\sigma,\sigma)\}_{\sigma\in \bsigma}$, where each $ \sigma$ is a representative cocycle for the bar $(b_\sigma,d_\sigma)$ and $\{ \sigma_1,\dots, \sigma_{q_1}\}$ is ordered first in the increasing order of the death time and then in the increasing order of the birth time.} 
\Output{A matrix representation $A_\ell$ of persistent cup-length diagram, and the lists of distinct birth times $\mathrm{b\_time}$ and death times $\mathrm{d\_time}$.} 
\BlankLine
$\mathrm{b\_time},\mathrm{d\_time}\gets \mathrm{unique}(\{b_\sigma\}_{ \sigma\in \bsigma}),\, \mathrm{unique}(\{d_\sigma\}_{ \sigma\in \bsigma})$\;
$ m_\udim,\, \ell,\, B_1 \gets \mathrm{card}(S^*),\, 1,\,\mathcal{B}_{[1,\udim]}$\;
$A_0=A_1\gets \mathrm{zeros}(\mathrm{card}(\mathrm{b\_time}), \mathrm{card}(\mathrm{d\_time}))$\;
$U\gets \mathrm{RowReduce}(R)$ \tcp*{$O( m_\udim ^2)$, %\cite[Alg.~3]{contessoto2021persistent}
Alg.~\ref{alg:row_reduction}
}
\For(\tcp*[f]{$O( m_\udim )$}){$(b_i,b_j)\in B_1$ \label{line:for}}{
$A_1(i,j)\gets 1$\;}
\While(\tcp*[f]{$O(\udim)$}){$A_{\ell-1}\neq A_{\ell}$ and $l\leq k-1$ \label{line:while}}{
    $B_{\ell+1}=\{\}$\; 
    \For(\tcp*[f]{$O( q_1\cdot q_{k-1})$}){$(b_{i_1},d_{j_1}, \sigma_1)\in B_1$ and $(b_{i_2},d_{j_2}, \sigma_2)\in B_{\ell}$ \label{line:while-for}
    }{
    $\sigma\gets \mathrm{CupProduct}( \sigma_1, \sigma_2,S^*)$ \tcp*{$O( m_\udim ^2\cdot c_\udim)$, Alg.~\ref{alg:cup_product}}
    $y\gets$ the vector representation of $\sigma$ in $S^*$\;
    $i\gets \max\{i': b_{i'}\leq d_{\min\{j_1,j_2\}}\}$\;
    $s_i\gets $ number of simplices alive at $b_i$\;
    \While(\tcp*[f]{$O((m_\udim)^2\cdot q_1)$}){$\{u:(U_{m_k+1-s_i:\,m_k,\, m_k+1-s_i:\,m_k}\cdot y_{m_k+1-s_i:\,m_k})(u)\neq 0\}\subset \mathrm{Pivots}(R)$ \label{line:while-while}}{
        $i\gets i-1$\;
        $s_i\gets $ number of simplices alive at $b_i$\;
        }
    \If{$b_i<d_{\min\{j_1,j_2\}}$}{
        Append $(b_i,d_{\min\{j_1,j_2\}},\sigma)$ to $B_{\ell+1}$\;
        $A_{\ell+1}(i,\min\{j_1,j_2\})\gets \ell$
        }
    }
    $\ell\gets \ell+1$.
    }
\Return $A_\ell,\mathrm{b\_time},\mathrm{d\_time}$.
\caption{Main algorithm: compute persistent cup-length diagram.}
\label{alg:main}
\end{algorithm}

\subparagraph*{Time complexity.}  

In Alg.~\ref{alg:main}, \textbf{line \ref{line:while-for}} runs no more than $q_1 \cdot q_{k-1} $ times, due to the definition of $q_1$ and $q_{k-1} $. 
The while loop in \textbf{line \ref{line:while-while}} runs no more than $\card(\operatorname{b\_time})\leq q_1$ times, and the condition of this while loop involves a matrix multiplication whose complexity is at most $O((m_k)^2)$.
Combined with other comments in Alg.~\ref{alg:main} and the fact that $k $ is a fixed constant,
the total complexity is upper bounded by 
\[O(k )\cdot O( q_1 \cdot q_{k-1} )\cdot O( (m_k )^2\cdot \max\{c_k ,q_1\}) \leq O( (m_k )^2\cdot q_1 \cdot q_{k-1}  \cdot \max\{c_k ,q_1\}).\]

Next, we estimate $q_{k-1} $ and $c_k $ using $q_1$, $m_k$ and $k $. Since each $B_\ell$ consists of $\ell$-fold $*_{\bsigma}$-products of elements in $B_1$, we have $ q_{k-1} = \max_{1\leq \ell\leq k-1}\card(B_\ell)\leq  (q_1 ) ^{k-1},$ which turns out to be a very coarse bound 
%(see \cite[Rmk.~45]{contessoto2021persistent}).
(see Rmk.~\ref{rmk:torus_complexity}). 
On the other hand, $c_k $ as the cost of checking whether a simplex is alive at a given parameter, is at most $m_k $ the number of simplices. Hence, the complexity of Alg.~\ref{alg:main} is upper bounded by $O( (m_k  )^{3}\cdot q_1^k)$. In addition, we have $q_1\leq m_k$, because in the matrix reduction algorithm for computing barcodes, bars are obtained from the pivots of the column reduced coboundary matrix and each column provides at most one pivot. Thus,
$O( (m_k )^2\cdot q_1 \cdot q_{k-1}  \cdot \max\{c_k ,q_1\})\leq O( (m_k  )^{3}\cdot q_1^k)\leq O( (m_k  )^{k +3}).$

Consider the Vietoris-Rips filtration arising from a metric space of $n$ points with the distance matrix $D$. Then \textbf{line \ref{line:last-if-cup}} of Alg.~\ref{alg:cup_product}, checking whether a simplex $a$ 
(represented by a set of at most $\udim+1$ indices into $[n]$)
is alive at the filtration parameter value $t$, can be done by checking whether $\max( D[a,a])\leq t$, with the constant time complexity $c_\udim=O(\udim^2)$. In summary, we have the following theorem. 
 
\label{para:complexity_VR}

\begin{restatable}[Complexity of Alg.~\ref{alg:main}]{theorem}{complexity}
\label{thm:comp-alg3} 
For an arbitrary finite filtration truncated up to dimension $(\udim+1)$, computing its persistent cup-length diagram via Alg.~\ref{alg:main} has complexity at most $O( (m_k )^2\cdot q_1 \cdot q_{k-1}  \cdot \max\{c_k ,q_1\})$. In terms of just $m_\udim$, the complexity of Alg.~\ref{alg:main} is at most $O( (m_\udim )^{\udim+3})$, since
$c_\udim\leq m_\udim$ and $q_{k-1}\leq (m_\udim)^{\udim-1}$. 

For the $(\udim+1)$-dimensional truncation of the Vietoris-Rips filtration arising from a metric space of $n$ points, the complexity of Alg.~\ref{alg:main} is improved to $O( (m_k )^2\cdot q_1^2 \cdot q_{k-1})$, which is at most $ O((m_\udim)^{\udim+3})\leq O(n^{\udim^2+5\udim+6})$. 
\end{restatable}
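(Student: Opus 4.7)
The plan is to bound the running time of Alg.~\ref{alg:main} by a line-by-line walk through its loops, and then simplify using coarse bounds on $q_1$, $q_{k-1}$, and $c_k$ in terms of $m_k$.

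First I would collect the cost of each piece. The preprocessing call $U\gets\mathrm{RowReduce}(R)$ costs $O(m_k^2)$ (Alg.~\ref{alg:row_reduction}) and is absorbed into later estimates. The outer while loop on line \ref{line:while} is explicitly capped at $\ell\leq k-1$ (and this cap is tight: a product of more than $k$ positive-dimensional cocycles lands in $\Hfunc^{>k}$, which vanishes after the $(k+1)$-dimensional truncation, so $B_\ell=\emptyset$ for $\ell\geq k$). The inner double loop on line \ref{line:while-for} iterates over $B_1\times B_\ell$, giving at most $q_1\cdot q_{k-1}$ iterations by the definition of $q_{k-1}$. Inside that loop, the call to $\mathrm{CupProduct}$ costs $O(m_k^2\cdot c_k)$ by Rmk.~\ref{rmk:complexity_cup_prod}.

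Next I would analyse the coboundary test on line \ref{line:while-while}. By Prop.~\ref{prop:support}, the birth end of $\operatorname{supp}(\sigma_1\smile\sigma_2)$ agrees with the birth of some $I\in\caB$, so the decreasing index $i$ ranges over at most $\card(\mathrm{b\_time})\leq q_1$ values. Each iteration checks the coboundary condition via Prop.~\ref{prop:is_coboundary}, which reduces to a submatrix–vector product of $U$ against $y$ at cost $O(m_k^2)$. Combining, the body of the inner for loop costs $O(m_k^2\cdot\max\{c_k,q_1\})$, and the total running time is
\[
O(k)\cdot O(q_1\cdot q_{k-1})\cdot O\!\left(m_k^2\cdot\max\{c_k,q_1\}\right)=O\!\left(m_k^2\cdot q_1\cdot q_{k-1}\cdot\max\{c_k,q_1\}\right),
\]
where the factor $O(k)$ is absorbed as a constant. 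To obtain the pure $m_k$ bound I would then use three elementary inequalities: $c_k\leq m_k$ (naively scan the simplex list), $q_1\leq m_k$ (each bar corresponds to a distinct pivot of the reduced coboundary matrix, of which there are at most $m_k$), and $q_{k-1}\leq q_1^{k-1}\leq m_k^{k-1}$ (every element of $\caB^{*_\bsigma\ell}$ arises from an $\ell$-tuple of bars). Substitution yields $O(m_k^{k+3})$.

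For the Vietoris-Rips case, membership of a simplex $a$ of dimension at most $k+1$ at parameter $t$ reduces to evaluating $\max D[a,a]$, which costs $c_k=O(k^2)=O(1)$ for fixed $k$; hence $\max\{c_k,q_1\}=q_1$ and the bound tightens to $O(m_k^2\cdot q_1^2\cdot q_{k-1})$. Using $m_k=O\!\left(\binom{n}{k+2}\right)=O(n^{k+2})$ together with $q_1\leq m_k$ and $q_{k-1}\leq m_k^{k-1}$, the exponent of $n$ becomes $2(k+2)+2(k+2)+(k+2)(k-1)=k^2+5k+6$. The only bookkeeping subtlety, and therefore the mild ``obstacle'', is to keep the parameters $q_1$, $q_{k-1}$, $c_k$ and $m_k$ separate throughout so that the structured bound is visible before one coarsens to a single variable; once this separation is maintained, both claims follow by direct substitution.
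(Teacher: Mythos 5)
Your proposal is correct and follows essentially the same route as the paper's proof: the same loop-by-loop accounting (the $q_1\cdot q_{k-1}$ bound on the inner double loop, the $q_1$ iterations of the coboundary test each costing $O(m_k^2)$, and the constant $O(k)$ outer factor), followed by the same coarsening via $c_k\leq m_k$, $q_1\leq m_k$ and $q_{k-1}\leq q_1^{k-1}$, and the same observation that $c_k=O(k^2)$ for Vietoris--Rips filtrations. The only additions beyond the paper's argument are minor elaborations (the justification that $B_\ell=\emptyset$ for $\ell\geq k$ after truncation, and the explicit exponent arithmetic $(k+2)(k+3)=k^2+5k+6$), which are consistent with the paper.
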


Notice that when $\udim=1$, 
%we have $B_{\ell}=\emptyset$ for all $\ell\geq 2$, and 
the persistent cup-length diagram simply evaluates $1$ at each bar in the standard barcode, and $0$ elsewhere. When $\udim\geq 2$, the resulting persistent cup-length diagram becomes more informative and  captures certain topological features that the standard persistence diagram is not able to detect. This is  reflected in Ex.~\ref{ex:sigma-diag-klein}. 

Although the algorithm has not been tested on datasets yet, it is a practical algorithm, given that there are available software programs, such as Ripser (see \cite{bauerGithub}), which computes the barcode and extracts representative cocycles for Vietoris-Rips filtrations.  Note that, according to \cite{bauer2021ripser}, the implementation ideas `are also applicable to persistence computations for other filtrations as well'. 

\begin{remark}[Estimating the parameter $q_{k-1}$] 
\label{rmk:q_{k-1}}
The inequality $q_{k-1}\leq (m_\udim)^{\udim-1}$ is quite coarse in general. %For the equality to take place, there need to be $m_k$ many intervals in the barcode, where all $\ell$-fold $*_{\bsigma}$-products of these intervals are non-zero.
Consider a filtration consisting of contractible spaces, where $q_{k-1}$ is always $0$ but $m_k$ can be arbitrarily large. Even in the case when there is a reasonable number of cohomology classes with non-trivial cup products, $q_{k-1}$ can be much smaller than $(m_k)^{k-1}$. See %\cite[Rmk.~45]{contessoto2021persistent}.
Rmk.~\ref{rmk:torus_complexity}.
\end{remark}

\begin{remark}[Reducing the time complexity] Because cup products cannot live longer than their factors, discarding short bars will not result into loss of important information. In our algorithm, an extra parameter $\epsilon\geq 0$ can be added to discard all the bars in the barcode $B_1$ with length less than $\varepsilon$.
By doing so, since the cardinality of $B_1$ is decreased, one expects the runtime of Alg.~\ref{alg:main} (in particular inside the loop in \textbf{line 9}) to be significantly reduced. A similar trimming strategy can also be applied in the construction of the subsequent $B_\ell$s. 
\end{remark}

\subparagraph*{Correctness of the algorithm.} Checking whether a cocycle is a coboundary requires local matrix reduction for the given filtration parameter $d_{\min\{j_1,j_2\}}$, but a global matrix reduction is performed in the algorithm. The reason is that the coboundary matrix $A$, the column reduction matrix $V$ and the row reduction matrix $U$ are all upper-diagonal. Therefore, reducing the ambient matrix $A$ and then taking the bottom-right submatrix to get $\bar{U}$, is equivalent to reducing the submatrix of $A$ directly.

\bibliography{refs-arxiv} 

%\appendix
%%%%%%%%%%%%%%%%%%%%%%%%%%%%%%%%%%%%%%%%%%%%%%%%%%%%%%%%%%%%%%%%%%%%%%%%%%%%%%%%%%%%%%%%%
\begin{appendices}
Below, we provide a table of contents in order to facilitate reading the paper.  The \textsection 1, \textsection 2, and \textsection 3 constitute the main body of the paper. The other sections serve as appendices, containing all necessary details which do not fit in the first 3 sections. In particular, in \textsection\ref{sec:stability} we establish the stability of the persistent cup-length function and apply it to estimate the Gromov-Hausdorff distance $d_{\mathrm{GH}}$ between metric spaces.

The proof of Thm.~\ref{thm:tropical_mobius} is on page~\pageref{pf:tropical_mobius} in \textsection~\ref{sec:proof_cup_dgm}, and the proof of Thm.~\ref{thm:main-stability} is on page~\pageref{pf:main-stability} in \textsection~\ref{sec:cont-pers-inv}.

%%%%%%%%%%%%%%%%%%%%%%%%%%%%%%%%%%%%%%%%%%%%%%%%%%%%%%%%%%%%%

\tableofcontents

\section{Persistent invariants}
\label{sec:Persistent-invariants}
In this section, we define invariants and persistent invariants in a general setting. In classical topology, an \textit{invariant} is a
numerical quantity associated to a given topological space that remains invariant under a homeomorphism. In linear algebra, an \textit{invariant} is a numerical quantity that remains invariant under a linear isomorphism of vector spaces. Extending these notions to a general `persistence' setting in TDA, leads to the study of \textit{persistent invariants}, which are designed to
extract and quantify important information about the TDA structures, such as 
the \textit{rank invariant} for persistent vector spaces \cite{carlsson2009topology,carlsson2009theory}.

\subsection{Persistence theory}\label{sec:pers-theo}
We recall the notions of persistent objects and their morphisms.
For general definitions and results in category theory, we refer to \cite{awodey2010category,leinster2014basic,mac2013categories}.

\begin{definition}\label{defi:pers-object}
Let $\CC$ be a category.
We call any functor $\Ffunc:(\R,\leq)\to\CC$ a \textbf{persistent object (in $\CC$)}.
Specifically, a persistent object $\Ffunc:(\R,\leq)\to\CC$ consists of 
\begin{itemize}
    \item for each $t\in\R$, an object $\Ffunc_t$ of $\CC$,
    \item for each inequality $t\leq s$ in $\R$, a morphism $f_t^s:\Ffunc_t\to\Ffunc_s$, such that 
    \begin{itemize}
        \item $f_t^t=\mathbf{id}_{\Ffunc_t}$
        \item $f_s^r\circ f_t^s=f_t^r$, for all $t\leq s\leq r$.
    \end{itemize}
     
\end{itemize}
\end{definition}

\begin{definition}
Let $\Ffunc,\Gfunc:(\R,\leq)\to\CC$ be two persistent objects in $\CC$.
A \textbf{natural transformation from $\Ffunc$ to $\Gfunc$}, denoted by $\varphi:\Ffunc\Rightarrow \Gfunc$, consists of an $\R$-indexed family $(\varphi_t:\Ffunc_t\to\Gfunc_t)_{t\in\R}$ of morphisms in $\CC$, such that the diagram
\[
\xymatrix
{
\mathbf{F}_t\ar[rr]^{\varphi_t}\ar[d]_{f_t^s}  &  &
\mathbf{G}_t \ar[d]^{g_t^s} \\
	  \mathbf{F}_s\ar[rr]^{\varphi_b}& & \mathbf{G}_s\\
}
\]
commutes for all $t\leq s$.
\end{definition}

\begin{example} \label{ex:VR}
\begin{itemize}
    \item 
Let $X$ be a finite metric space and let $\VR_t(X)$ denote the \textit{Vietoris-Rips complex of $X$} at the scale parameter $t$, which is a simplicial complex defined as $\VR_t(X):=\{\alpha\subset X:\diam(\alpha)\leq t\}.$ Let $|\cdot|$ denote the geometric realization of a simplicial complex, and we denote   
 \[
     \X_t :=  
      \begin{cases}
      |\VR_t(X)| , & \text { if } t\geq 0;\\
      |\VR_0(X)| , &  \text{ otherwise.}
     \end{cases}
\]
For each inequality $t\leq s$ in $\R$, we have the inclusion $\iota_t^s :\X_t\hookrightarrow\X_s$ giving rise to a persistent space $\Xfunc:(\R,\leq)\to \Top$.

\item Applying the $p$-th homology functor to a persistent topological space $\Xfunc$, for each $t\in\R$ we obtain the vector space $\Hfunc_p(\X_t)$ and for each pair of parameters $t\leq s$ in $\R$, we have the linear map in (co)homology induced by the inclusion $\X_t\hookrightarrow\X_s$. This is another example of a persistent object, namely a \textit{persistent vector space} $\Hfunc_p(\Xfunc):(\R,\leq)\to \Vect$. Dually, by applying the $p$-th cohomology functor we obtain a persistent vector space $\Hfunc^p(\Xfunc):(\R,\leq)\to \Vect$ which is a contravariant functor.
\end{itemize}
\end{example}

\subsection{Injective-surjective invariants}\label{sec:eip-mono-inv}
We define the notions of (iso-)invariants and inj-surj invariants in a general categorical setting.

\begin{definition}[Invariant]\label{defi:invariant}
Let $\CC$ be any category.
An \textbf{iso-invariant} or simply an \textbf{invariant} in $\CC$ is any map $J:\ob(\CC)\to\N$ such that: if $X\cong Y$ in $\CC$, then $J(X)=J(Y)$. 
\end{definition}
Examples of iso-invariants include the \emph{cardinality invariant} in the category of sets, the \emph{dimension invariant} in the category of vector spaces and the \emph{number of connected components} in the category of topological spaces.

% \begin{definition} 
% Let $\CC$ be any category and let $f:A\to B$ be a morphism of $\CC$. We say that an injective morphism $g: I\to B$ is the
% \textbf{image of $f$}, and we denote $I$ by $\mathbf{Im}f$ or $f(A)$, if there exists a morphism $h:A\to I$ such that $f=g\circ h$. We want these morphisms to satisfy the universal property that: for an injective morphism $g': J\to B$ and a morphism $h':A\to J$ such that $f=g'\circ h'$, there is a unique morphism $k:I\to J$ such that the following diagram commutes:
% \[
% \xymatrix
% {
% A \ar[ddr]_{h'}\ar[rr]^{f}\ar[dr]^{h}  &  & B \\
% 	 & I\ar[ur]^{g}\ar[d]^{k} & \\
% 	 & J \ar[uur]_{g'} & \\
% }
% \]
% Such a factorization, if it exists, is unique up to isomorphism.
% Moreover, this unique object, $I$, will be denoted by $\mathbf{Im}(f)$, the unique morphism $h$ is necessarily a surjective morphism which will be denoted by $q_f$, and the unique morphism $g$ is necessarily an injective morphism which will be denoted by $j_f$.
% We will denote the unique injective-surjective factorization of $f$ as
% \[
% \xymatrix
% {
% A \ar[rr]^{f}\ar@{->>}[dr]_{q_f}  &  & B \\
% 	 & \mathbf{Im}(f)\ar@{^{(}->}[ur]_{j_f} & \\
% }
% \]

% If every morphism in $\CC$ has an image, then we say that $\CC$ is a \textit{category with images}.
% \end{definition}

\begin{definition}[Inj-surj invariant]\label{defi:injective-surjective-inv}
Let $\CC$ be any category.
An \textbf{injective-surjective invariant (or inj-surj invariant)} in $\CC$ is any map $J:\ob(\CC)\to\mathbb{N}$ such that: 
\begin{itemize}
    \item if there is an injective morphism $X\hookrightarrow Y$, then $J(X)\leq J(Y)$;
    \item if there is a surjective morphism $ X\twoheadrightarrow Y$, then $J(X)\geq J(Y).$
\end{itemize}
\end{definition}

Any inj-surj invariant is clearly an iso-invariant.

\begin{example}
Let $\Vect$ be the category of finite dimensional vector spaces over the field $K$ whose morphisms are $K$-linear maps. The dimension invariant $\mathbf{dim}:\ob(\Vect)\to\N$, that assigns to each vector space its dimension, is an example of an inj-surj invariant.
\end{example}

A contravariant functor from $\DD$ to $\CC$ is equivalent to a covariant functor from $\DD$ to the opposite category\footnote{The opposite category $\CC^{op}$ of a category $\CC$ is the category whose objects are the same as $\CC$, but whose arrows are the arrows of $\CC$ with the reverse direction.} $\CC^{op}$ of $\CC$. 

\begin{remark}
\label{rem:self-duality of inj-surj invariants}
Any inj-surj invariant $J:\ob(\CC)\to\N$ of $\CC$ is also an inj-surj invariant in the opposite category $\CC^{op}$ of $\CC$. 
The injective morphisms $X\hookrightarrow Y$ in $\CC^{op}$ are exactly the surjective morphisms $Y\twoheadrightarrow X$ in $\CC$ and the surjective morphisms $X\twoheadrightarrow Y$ in $\CC^{op}$ are exactly the injective morphisms $Y\hookrightarrow X$ in $\CC$.
Thus, we have
\[\left(X\hookrightarrow Y\text{ in }\CC^{op}\right)\Rightarrow \left(Y\twoheadrightarrow X\text{ in }\CC\right)\Rightarrow \left(J(X)\leq J(Y)\right),\]
\[\left(X\twoheadrightarrow Y\text{ in }\CC^{op}\right)\Rightarrow \left(Y\hookrightarrow X\text{ in }\CC\right)\Rightarrow \left(J(X)\geq J(Y)\right).\]
\end{remark}

\subsection{Persistent invariants}\label{sec:pers-inv}
We define the notion of a persistent invariant in a general persistence setting.

\begin{definition}[Persistent invariant]\label{def:p_invariant}
Let $\CC$ be a category with images and let $J:\ob(\CC)\to\N$ be an inj-surj invariant. 
For any given persistent object $\Ffunc:(\R,\leq)\to\CC$, we associate the map
\begin{align*}
    J(\Ffunc):\Int&\to\N\\
    [a,b]&\mapsto J(\mathbf{Im}(f_a^b)).
\end{align*}
$J(\Ffunc)$ is called the \text{persistent invariant associated to $\Ffunc$}.
\end{definition}

For brevity in notation, for any morphism $f:X\to Y$ in $\CC$, we will denote $J(f(X))$ and $J(\mathbf{Im} (f))$ simply by $J(f)$.

\begin{lemma}
\label{lem:gf}
Let $\CC$ be a category with images and let $J$ be an inj-surj invariant.
Then, for any two morphisms $f:X\to Y$ and $g:Y\to Z$ we have
\[J(g\circ f)\leq\min\{J(f),J(g)\}.\]
\end{lemma}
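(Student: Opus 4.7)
The plan is to use the universal-style factorization of a composite through its image and relate $\mathbf{Im}(g\circ f)$ to $\mathbf{Im}(f)$ and $\mathbf{Im}(g)$ via a surjection and an injection, respectively, then apply the inj-surj property of $J$.

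First I would observe that since $\CC$ has images, every morphism $h$ admits a factorization $h = m \circ e$, where $e$ is surjective onto $\mathbf{Im}(h)$ and $m:\mathbf{Im}(h)\hookrightarrow \mathrm{cod}(h)$ is injective. Applying this to $f:X\to Y$, write $f = m_f \circ e_f$ with $e_f:X\twoheadrightarrow \mathbf{Im}(f)$ and $m_f:\mathbf{Im}(f)\hookrightarrow Y$. Similarly for $g\circ f:X\to Z$ and $g:Y\to Z$. The composite then reads $g\circ f = g \circ m_f \circ e_f$. Since $e_f$ is surjective, the image of $g\circ f$ coincides (as a subobject of $Z$) with the image of $g\circ m_f$, which is contained in $\mathbf{Im}(g)$. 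This produces a canonical injection $\mathbf{Im}(g\circ f)\hookrightarrow \mathbf{Im}(g)$, so by Defn.~\ref{defi:injective-surjective-inv}, $J(g\circ f)\leq J(g)$.

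Next I would produce a surjection $\mathbf{Im}(f)\twoheadrightarrow \mathbf{Im}(g\circ f)$. The morphism $g\circ m_f:\mathbf{Im}(f)\to Z$ factors through its image, yielding a surjection $\mathbf{Im}(f)\twoheadrightarrow \mathbf{Im}(g\circ m_f) = \mathbf{Im}(g\circ f)$, where the last equality holds because precomposition with the surjection $e_f$ does not alter the image. Another application of Defn.~\ref{defi:injective-surjective-inv} gives $J(g\circ f)\leq J(f)$. Combining the two inequalities yields
\[
J(g\circ f)\leq \min\{J(f),J(g)\},
\]
as desired.

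The main (minor) subtlety is justifying that $\mathbf{Im}(g\circ f)$ and $\mathbf{Im}(g\circ m_f)$ agree as subobjects of $Z$; this is where surjectivity of $e_f$ is used, together with the fact that the image of a composite with a surjection on the right is unchanged. Everything else is a direct appeal to the defining properties of inj-surj invariants.
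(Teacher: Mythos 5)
Your proposal is correct and follows essentially the same route as the paper: both arguments reduce to the canonical surjection $\mathbf{Im}(f)\twoheadrightarrow \mathbf{Im}(g\circ f)$ and the canonical injection $\mathbf{Im}(g\circ f)\hookrightarrow \mathbf{Im}(g)$, followed by the two defining properties of an inj-surj invariant. You simply make the image-factorization bookkeeping (and the fact that precomposing with a surjection leaves the image unchanged) explicit, which the paper's proof takes for granted by writing $g(f(R))$ directly.
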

\begin{proof}
We claim that $J(g\circ f)\leq J(f)$ and $J(g\circ f)\leq J(g)$.
Indeed:
\begin{itemize}
    \item The canonical surjective morphism $f(R)\twoheadrightarrow g(f(R))$ yields the inequality $ J(f(R))\geq J(g(f(R)))$, or equivalently $J(g\circ f)\leq J(f)$.
    \item  The canonical injective morphism  $g(f(R))\hookrightarrow  g(S)$, yields the inequality $J(g(f(R)))\leq J(g(S))$, or equivalently $J(g\circ f)\leq J(g)$.
\end{itemize}
\vspace{-1em}
\end{proof}

\begin{proposition}[Functoriality of persistent invariants]
\label{thm:Functoriality of persistent invariants}
Let $\CC$ be a category with images and let $J:\ob(\CC)\to\N$ be an inj-surj invariant in $\CC$.
Let $\mathbf{F}:(\mathbb{R},\leq)\to \CC$ be a persistent object in $\CC$. 
Then, the associated persistent invariant $J(\Ffunc)$ forms a functor $J(\Ffunc):(\Int,\subset)\to(\N,\geq)$.
Namely: 
\[[a,b]\subseteq [c,d]\Rightarrow J(f_a^b)\geq J(f_{c}^{d}).\]
\end{proposition}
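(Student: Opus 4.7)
The plan is to reduce the statement to a double application of \cref{lem:gf} after exploiting the functoriality of $\Ffunc$. Suppose $[a,b]\subseteq[c,d]$, which by definition means $c\leq a\leq b\leq d$. Since $\Ffunc$ is a functor on $(\R,\leq)$, the transition morphism $f_c^d:\Ffunc_c\to\Ffunc_d$ admits the canonical factorization
\[
f_c^d \;=\; f_b^d \circ f_a^b \circ f_c^a,
\]
so the problem becomes one about the behavior of $J$ under composition.

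First I would apply \cref{lem:gf} to the pair $(f_a^b\circ f_c^a,\;f_b^d)$, obtaining
\[
J(f_c^d) \;=\; J\bigl(f_b^d\circ(f_a^b\circ f_c^a)\bigr) \;\leq\; J(f_a^b\circ f_c^a).
\]
Then I would apply \cref{lem:gf} once more, this time to the pair $(f_c^a,\;f_a^b)$, yielding $J(f_a^b\circ f_c^a)\leq J(f_a^b)$. Chaining the two inequalities gives $J(f_c^d)\leq J(f_a^b)$, which is exactly the required order-reversing condition $J(\Ffunc)([c,d])\leq J(\Ffunc)([a,b])$ when $(\N,\geq)$ is regarded as a thin category.

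Finally, because both $(\Int,\subseteq)$ and $(\N,\geq)$ are thin (poset) categories, preservation of identities and composition of morphisms is automatic once the order is preserved. Therefore establishing the displayed inequality above is sufficient to conclude that $J(\Ffunc)$ is a functor. There is no real obstacle here: the only subtle point is to remember that the codomain poset is $(\N,\geq)$ rather than $(\N,\leq)$, so that the inequality $J(f_c^d)\leq J(f_a^b)$ indeed corresponds to a morphism in the codomain; everything else is a direct consequence of \cref{lem:gf} and of the functoriality of $\Ffunc$.
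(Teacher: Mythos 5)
Your argument is correct and is essentially identical to the paper's own proof: both use the factorization $f_c^d = f_b^d\circ f_a^b\circ f_c^a$ and two applications of Lem.~\ref{lem:gf} to conclude $J(f_c^d)\leq J(f_a^b)$. The only cosmetic difference is that the paper writes out the intermediate $\min$ bounds explicitly while you select the relevant term of the $\min$ at each step.
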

\begin{proof}
Suppose we have $[a,b]\subseteq [c,d]$. 
Then, by applying  Lem.~\ref{lem:gf}, we obtain:
\begin{align*}
    J(f_{c}^{d})&=J(f_{b}^{d}\circ f_{a}^{b}\circ f_{c}^{a})\\
    &\leq \min\{J(f_b^{d}),J(f_{a}^{b}\circ f_{c}^{a})\}\\
    &\leq J(f_{a}^{b}\circ f_{c}^{a})\\
  &\leq \min\{J(f_{a}^{b}),J(f_{c}^{a})\}\\
 &\leq J(f_{a}^{b}).
 \end{align*}
 \vspace{-1em}
\end{proof}

\begin{example}
\label{ex:pers-rank}
Recall that $\mathbf{dim}:\ob(\Vect)\to\N$ is an inj-surj invariant associated to the category $\Vect$ of finite dimensional vector spaces over $K$. Let us consider $\mathbf{V}:(\R,\leq)\to\Vect$, a persistent $K$-vector space (also called a persistent module).
Then, we can consider the functor $\rk(
\mathbf{V}):(\Int,\subset)\to(\N,\geq)$ that associates each interval $[a,b]$ to the rank $\rank(v_a^b)$ of the transition map $v_a^b:\Vfunc_a\to \Vfunc_b$. The map $\rk(
\mathbf{V})$ is known in TDA as the \textbf{rank invariant} \cite{kim2021generalized}.
Note that $\rk(v_a^b)=\mathbf{dim}(\mathbf{Im}(v_a^b))$, thus the rank invariant is an example of a persistent invariant. 
\end{example}

%%%%%%%%%%%%%%%%%%%%%%%%%%%%%%%%%%%%%%%%%%%%%%%%%%%%%%%%%%%%%%%%%%%%%

\section{Proofs and details for \textsection \ref{sec:Persistent-cup-length} (persistent cup-length function)}
\label{sec:supp_cup_l_func}

\subsection{Supplementary materials for \textsection \ref{sec:cup_legnth}}

%including the review of the definition of a graded ring, proofs , and the computation of the persistent cup-length function of the Vietoris-Rips filtrations of $\bbS^1$ and $\T^d$.

\begin{definition}
A \textbf{ring} $(R,+,\bullet)$ is a set
$R$ equipped with an addition operation $+$ and a multiplication operation $\bullet$, such that (i) $(R,+)$ is an abelian group, (ii) $(R,\bullet)$ is a monoid, and  (iii) multiplication is distributive with respect to addition, i.e. $a\bullet (b+c)=(a\bullet b)+(a\bullet c)$ and $(a+b)\bullet c=(a\bullet c)+(b\bullet c)$.
\end{definition}

\begin{definition}
A ring $(R,+,\bullet)$ is called \textbf{graded ring} if there exists a family of subgroups $\lbrace R_{n}\rbrace _{n\in \N}$ of $R$ such that $R=\bigoplus_{n\in \N} R_n$ (as abelian groups), and $R_a\bullet R_b\subseteq R_{a+b}$ for all $a,b\in \N$.
Let $R$ and $S$ be two graded rings. 
A ring homomorphism $\varphi:R\to  S$ is called a \textbf{graded homomorphism} if it preserves the grading, i.e.~$\varphi(R_n)\subset S_n$, for all $n\in\N$.
\end{definition}
Recall: 
\deflength*

Here are some properties of the (cup-)length invariant that we will use.
\begin{restatable}[Compute cup-length using basis]
{proposition}{propcuplengthbasis}
\label{prop:len_of_basis}
Let $R$ be a graded ring. Suppose $B=\bigcup_{p\geq 1} B_p$, where each $B_p$ generates $R_p$ as a monoid under addition. Then $\len(R)=\sup\left\{\ell\geq 1\mid B^{\smile \ell}\neq \{0\}\right\}.$ In the case of cohomology ring, for each $p\geq 1$ let $B_p$ be a set that linearly spans $\Hfunc^p(\X)$, and let $B:=\bigcup_{p\geq 1} B_p$. Then $\cupprod(\X)=\sup\left\{\ell\geq 1\mid B^{\smile \ell}\neq \{0\}\right\}$. 
\end{restatable}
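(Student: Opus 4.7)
\medskip

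\noindent\textbf{Proof plan.} The plan is to prove the two inequalities separately. Let $L := \sup\{\ell\geq 1 \mid B^{\smile \ell}\neq \{0\}\}$, where $B^{\smile \ell}$ denotes the set of $\ell$-fold products $b_1 \bullet \dots \bullet b_\ell$ with each $b_i \in B$.

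For the inequality $L \leq \len(R)$, I would argue directly from the definition of $\len(R)$. Any element $b \in B$ belongs to some $B_p$ with $p \geq 1$, hence is a positive-dimensional homogeneous element of $R$. So any witness $b_1 \bullet \dots \bullet b_\ell \neq 0$ in $B^{\smile \ell}$ is automatically a witness for $\len(R) \geq \ell$, and taking the supremum gives $L \leq \len(R)$.

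For the reverse inequality $\len(R) \leq L$, suppose $\eta_1,\dots,\eta_\ell$ are positive-dimensional homogeneous elements with $\eta_1\bullet \dots\bullet \eta_\ell \neq 0$. For each $i$, write $\eta_i \in R_{p_i}$ with $p_i \geq 1$. Since $B_{p_i}$ generates $R_{p_i}$ as a monoid under addition (resp.\ linearly spans $\Hfunc^{p_i}(\X)$ in the cohomology case), we may write $\eta_i = \sum_{j} \lambda_{i,j}\, b_{i,j}$ where each $b_{i,j} \in B_{p_i}$ and the coefficients $\lambda_{i,j}$ are scalars (integer coefficients in the monoid case, field coefficients in the cohomology case). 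Applying the distributive law for $\bullet$ together with bilinearity (valid in both settings) yields
\[
\eta_1\bullet\dots\bullet\eta_\ell = \sum_{j_1,\dots,j_\ell} \lambda_{1,j_1}\cdots\lambda_{\ell,j_\ell}\; b_{1,j_1}\bullet \dots \bullet b_{\ell,j_\ell}.
\]
Since the left-hand side is nonzero, at least one summand on the right must be nonzero, and in particular there exist indices $j_1,\dots,j_\ell$ such that $b_{1,j_1}\bullet\dots\bullet b_{\ell,j_\ell} \neq 0$. This exhibits a nonzero $\ell$-fold product of elements of $B$, so $L \geq \ell$. Taking the supremum over all valid $\ell$ gives $\len(R)\leq L$.

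The cohomology statement then follows immediately by specializing $R = \Hfunc^*(\X)$ with $\bullet = \smile$, since each $B_p$ linearly spans $\Hfunc^p(\X)$ and in particular generates it as an additive monoid. The main subtlety is ensuring that distributivity is used correctly with coefficients in the monoid-generation case; however, since elements of $R_p$ (as an abelian group) are finite signed sums of elements of $B_p$, the same expansion argument goes through, with signs absorbed into the scalars $\lambda_{i,j}$.
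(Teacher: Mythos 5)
Your proof is correct and takes essentially the same route as the paper: both reduce the claim to showing that a nonzero product $\eta_1\bullet\dots\bullet\eta_\ell$ of positive-dimensional homogeneous elements forces some $\ell$-fold product of elements of $B$ to be nonzero, by expanding each $\eta_i$ over $B_{p_i}$ and applying distributivity. Your write-up is slightly more explicit about the easy inclusion $L\leq\len(R)$, which the paper leaves implicit, but the core argument is identical.
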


\begin{proof}[Proof of Prop. \ref{prop:len_of_basis}] 
It follows from the definition that $\len(R)=\sup\big\{\ell\geq 1\mid(\bigcup_{n\geq 1} R_n )^\ell\neq \{0\}\big\}.$ We claim that $\left(\bigcup_{n\geq 1} R_n \right)^\ell\neq \{0\}$ iff $B^{\smile \ell}\neq \{0\}$. Indeed, whenever $\eta_1\bullet\dots\bullet\eta_\ell\neq 0$, where each $\eta_i\in \bigcup_{n\geq 1} R_n$, we can write every $\eta_i$ as a linear sum of elements in $B$. Thus, $\eta$ can be written as a linear sum of elements in the form of $r_1\bullet\dots\bullet r_\ell$, where each $r_j\in B$. Because $\eta\neq 0$, there must be a summand $r_1\bullet\dots\bullet r_\ell\neq 0$. Therefore, $B^{\smile \ell}\neq \{0\}$. 
\end{proof}

\begin{definition}%\label{def:cup_l_func}
We define the \textbf{persistent length function} of a persistent graded ring $\Rfunc$ as the function $\len(\Rfunc):\Int\to\N$ given by $\langle t,s \rangle\mapsto\len\left(\mathbf{Im}(\Rfunc(\langle t,s\rangle)\right).$ 
Here the notation $\mathbf{Im}(\Rfunc(\langle t,s\rangle)$ is defined in a similar convention to Rmk.~\ref{rmk:image-ring}. In particular, $\mathbf{Im}(\Rfunc([t,s]))=\mathbf{Im}(\Rfunc_s\to \Rfunc_t)$.
\end{definition}

\begin{restatable}{proposition}{propmonotone}
\label{prop:injective-surjective-length}
The length invariant $\len:\ob(\Ring)\to\N$ is an inj-surj invariant in $\Ring$. Namely: 
\begin{itemize}
    \item If $f:R\hookrightarrow S$ is an injective morphism, then
$\len(R)\leq \len(S)$.
    \item If $f:R\twoheadrightarrow S$ is a surjective morphism, then $\len(R)\geq \len(S)$.
\end{itemize}
\end{restatable}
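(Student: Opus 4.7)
The plan is to verify the two implications directly from \Cref{def:length} by transporting witnesses of the length across the morphism $f$. Throughout, recall that a graded ring morphism $f\colon R\to S$ sends each homogeneous component $R_p$ into $S_p$, and in particular sends positive-dimension homogeneous elements to positive-dimension homogeneous elements (or to $0$). If $\len(R)=0$ or $\len(S)=0$ the relevant inequality is trivial, so we may assume both lengths are at least $1$.

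For the injective case, suppose $\len(R)=\ell\geq 1$ and pick positive-dimension homogeneous $\eta_1,\dots,\eta_\ell\in R$ with $\eta_1\bullet\dots\bullet\eta_\ell\neq 0$. Applying $f$ gives $f(\eta_1)\bullet\dots\bullet f(\eta_\ell)=f(\eta_1\bullet\dots\bullet\eta_\ell)$, and injectivity of $f$ forces this element to be nonzero in $S$. Since each $f(\eta_i)$ is homogeneous of the same positive degree as $\eta_i$, the elements $f(\eta_1),\dots,f(\eta_\ell)$ witness $\len(S)\geq\ell=\len(R)$.

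For the surjective case, the key preliminary observation is that a surjective graded ring morphism $f\colon R\twoheadrightarrow S$ restricts to a surjection $f|_{R_p}\colon R_p\twoheadrightarrow S_p$ on each homogeneous component; this follows by taking any preimage $\eta\in R$ of a homogeneous $\zeta\in S_p$, decomposing $\eta=\sum_k\eta_k$ according to the grading, and using uniqueness of the grading decomposition of $f(\eta)=\zeta$ to conclude $f(\eta_p)=\zeta$. Now suppose $\len(S)=\ell\geq 1$ and pick positive-dimension homogeneous witnesses $\zeta_1,\dots,\zeta_\ell\in S$ with $\zeta_1\bullet\dots\bullet\zeta_\ell\neq 0$. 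Using the restricted surjectivity, pick positive-dimension homogeneous $\eta_i\in R$ with $f(\eta_i)=\zeta_i$. Then $f(\eta_1\bullet\dots\bullet\eta_\ell)=\zeta_1\bullet\dots\bullet\zeta_\ell\neq 0$, hence $\eta_1\bullet\dots\bullet\eta_\ell\neq 0$, so $\len(R)\geq\ell=\len(S)$.

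There is no real obstacle here; the only mildly subtle point is the lemma that a surjective graded morphism is surjective on each graded piece, which is what makes the choice of homogeneous preimages legitimate in the second part. Combining the two inequalities shows that $\len$ satisfies both defining conditions of an inj-surj invariant in $\Ring$, completing the proof.
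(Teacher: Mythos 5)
Your proof is correct and follows essentially the same route as the paper's: push witnesses forward along an injection, pull them back along a surjection. You are in fact slightly more careful than the paper in the surjective case, where you justify that homogeneous positive-degree preimages exist (the paper takes the preimages $\alpha_i$ for granted without addressing their homogeneity).
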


\begin{proof}[Proof of Prop. \ref{prop:injective-surjective-length}]
Let $f:R\hookrightarrow S$ be a graded ring injective morphism, and suppose $\len(R)=\ell$. Then there exist $\eta_1, \dots,\eta_\ell\in \bigcup_{p\geq 1}R_p$, such that
$\eta:=\eta_1 \bullet\dots\bullet\eta_\ell\neq0 \in R$. 
Then, $\len(S)\geq \ell$, because 
\begin{align*}
    f(\eta_1) \bullet \dots \bullet f(\eta_\ell)&=f(\eta_1 \bullet\dots\bullet\eta_\ell)\\
    &=f(\eta)\neq0\hspace{1em}\text{ (since }\eta\neq0\text{ and }f\text{ is an injective morphism).}
\end{align*}

Let $f:R\twoheadrightarrow S$ be a graded ring surjective morphism, and suppose $\len(S)=\ell$. Because $f$ is surjective, %ring surjective morphism does not gaurantee surjectivity. See \href{https://math.stackexchange.com/questions/81123/examples-of-categories-where-surjective morphism-does-not-have-a-right-inverse-not-surj}{https://math.stackexchange.com/questions/81123/examples-of-categories-where-surjective morphism-does-not-have-a-right-inverse-not-surj} 
$f(\alpha_1)\bullet\dots \bullet f(\alpha_\ell)\neq 0\in S$.
Since $f$ is a ring homomorphism, then  $f(\alpha_1\bullet \dots \bullet\alpha_\ell)\neq 0\in S$.
For the same reason, we have $\alpha_1\bullet \dots \bullet \alpha_\ell\neq 0\in R$.
Thus, $\len(R)\geq \ell$.
\end{proof}

\begin{proposition}
\label{prop:tensor-direct product}
Let $f:R\to R'$, $g:S\to S'$ be morphisms in $\Ring^{op}$, and $\X,\Y$ be two spaces. We define the length of $f$ to be the length $\len(\im f)$ of its image. Then:
\begin{align*}
    \len(R\otimes S)&=\len(R)+\len(S) & \len(R\times S)&=\max\{\len(R),\len(S)\}\\
    \len(f\otimes g)&=\len(f)+\len(g)&\len(f\times g)&=\max\{\len(f),\len(g)\}\\
    \cupprod(\X\times \Y)&=\cupprod(\X)+\len(\Y) & \cupprod(\X\amalg \Y)&=\max\{\cupprod(\X),\cupprod(\Y)\}.
\end{align*}
\end{proposition}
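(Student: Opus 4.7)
The plan is to reduce the six equalities to two purely ring-theoretic identities, namely $\len(R\otimes S)=\len R+\len S$ and $\len(R\times S)=\max\{\len R,\len S\}$, and then obtain the remaining four by functoriality of image rings together with the K\"unneth and coproduct isomorphisms in cohomology. For the morphism statements I will use that $\mathbf{Im}(f\otimes g)=\mathbf{Im}(f)\otimes\mathbf{Im}(g)$ and $\mathbf{Im}(f\times g)=\mathbf{Im}(f)\times\mathbf{Im}(g)$ in $\Ring^{op}$, so that $\len(f\otimes g)=\len(\mathbf{Im} f)+\len(\mathbf{Im} g)$ and similarly for $\times$. For the space statements I will invoke the K\"unneth theorem, which over the field $K$ (under the finite-type hypothesis in force in the paper) yields $\Hfunc^*(\X\times\Y)\cong\Hfunc^*(\X)\otimes_K\Hfunc^*(\Y)$ as graded $K$-algebras, and the elementary fact that $\Hfunc^*(\X\amalg\Y)\cong\Hfunc^*(\X)\times\Hfunc^*(\Y)$ since cohomology sends coproducts to products. (I read the mixed display $\cupprod(\X\times\Y)=\cupprod(\X)+\len(\Y)$ as $\cupprod(\X)+\cupprod(\Y)$.)

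The direct-product identity is elementary. Multiplication in $R\times S$ is componentwise, so $(r_1,s_1)\cdots(r_\ell,s_\ell)=(r_1\cdots r_\ell,\,s_1\cdots s_\ell)$ is nonzero exactly when at least one of the coordinate products is; this forces $\ell\leq\max\{\len R,\len S\}$, and the opposite inequality comes from padding a top product in $R$ (resp.\ $S$) with zeros in the other coordinate.

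For the tensor product, the lower bound is again by direct construction: if $a_1\cdots a_p\neq 0$ and $b_1\cdots b_q\neq 0$ are top products in $R$ and $S$ respectively, then
\[ (a_1\otimes 1)\cdots(a_p\otimes 1)(1\otimes b_1)\cdots(1\otimes b_q)=\pm(a_1\cdots a_p)\otimes(b_1\cdots b_q)\neq 0, \]
using that the subrings $R\otimes 1$ and $1\otimes S$ graded-commute. The upper bound is the heart of the proposition. I will decompose the positive-degree ideal of $R\otimes S$ as $P=X\oplus Y\oplus Z$ with $X:=\bar R\otimes 1$, $Y:=1\otimes\bar S$, $Z:=\bar R\otimes\bar S$, where $\bar R,\bar S$ denote the positive-degree parts. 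Since $Z\subseteq X\cdot Y$ and $X$ graded-commutes with $Y$, I write each factor $\eta_i=x_i+y_i+z_i$, distribute the product, and reorganize each resulting monomial by moving $X$-letters past $Y$-letters. A monomial indexed by the partition $\{1,\ldots,\ell\}=A\sqcup B\sqcup C$ (recording whether the $i$-th factor contributes its $x_i$, $y_i$, or $z_i$ part) lands in $X^{|A|+|C|}\cdot Y^{|B|+|C|}$, which vanishes as soon as $|A|+|C|>\len R$ or $|B|+|C|>\len S$. Hence any surviving monomial satisfies $\ell=|A|+|B|+|C|\leq(|A|+|C|)+(|B|+|C|)-|C|\leq\len R+\len S$, giving $\len(R\otimes S)\leq\len R+\len S$.

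The main obstacle will be the tensor-product upper bound: carefully tracking Koszul-style signs when commuting $X$- and $Y$-letters past each other, and verifying that the rewriting $Z\subseteq X\cdot Y$ is compatible with the positive-degree hypothesis. Neither point actually obstructs vanishing — the signs are units in $K$ and $\bar R\otimes\bar S$ is spanned by products of one element from $X$ and one from $Y$ — but the bookkeeping has to be done honestly. A secondary technicality is pinning down the finiteness hypothesis on $\X,\Y$ under which the cohomological K\"unneth map is a ring isomorphism over the field $K$; for the simplicial, finite-dimensional-cohomology setting of this paper this is standard.
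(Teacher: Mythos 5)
Your proposal is correct and follows essentially the same route as the paper's proof: both reduce everything to the two ring identities, prove $\len(R\times S)=\max\{\len(R),\len(S)\}$ by componentwise multiplication, obtain the tensor lower bound from products of $a_i\otimes 1$ and $1\otimes b_j$, and derive the morphism and space statements from $\mathbf{Im}(f\otimes g)=\mathbf{Im}(f)\otimes\mathbf{Im}(g)$, $\mathbf{Im}(f\times g)=\mathbf{Im}(f)\times\mathbf{Im}(g)$, $\Hfunc^*(\X\times\Y)\cong\Hfunc^*(\X)\otimes\Hfunc^*(\Y)$ (K\"unneth over a field) and $\Hfunc^*(\X\amalg\Y)\cong\Hfunc^*(\X)\times\Hfunc^*(\Y)$. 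The only cosmetic difference is in the tensor upper bound, where the paper invokes Prop.~\ref{prop:len_of_basis} to reduce to products of pure tensors and then counts positive-degree components, whereas you expand each factor against the decomposition $\bar R\otimes 1\oplus 1\otimes\bar S\oplus\bar R\otimes\bar S$ by hand; your version is a bit more explicit about the Koszul signs and the degree-zero bookkeeping, but the counting $\ell\leq\len(R)+\len(S)$ is the same argument.
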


\begin{proof}[Proof of Prop. \ref{prop:tensor-direct product}] 
We first prove that $\len(R\otimes S)= \len(R)+\len(S)$. The inequality $\len(R\otimes S)\leq \len(R)+\len(S)$ is trivial. For `$\geq$', let $B_p:=\bigcup_{j=0}^p R_j\otimes S_{p-j}$ for $p\geq 1$, and let $B:=\bigcup_{p\geq 1}B_p$. Since each $B_p$ generates $(R\otimes S)_p$ under addition, we apply Proposition \ref{prop:len_of_basis} to obtain that $\len(R\otimes S)=\sup\{\ell\geq 1\mid B^{\smile \ell}\neq 0\}$. Assume that $B^{\smile \ell}\neq 0$. Then there exists elements $r_1,\dots,r_\ell\in R$ and $s_1,\dots,s_\ell\in S$ such that $\deg(r_i)+\deg(s_i)\geq 1$ for each $i$, and $(r_1\otimes s_1)\bullet\dots\bullet(r_\ell\otimes s_\ell)\neq 0$. It follows that $r_1\bullet\dots\bullet r_\ell\neq 0$ and $s_1\bullet\dots\bullet s_\ell\neq 0$. For every $i$ we have $\deg(r_i)\geq 1$ or $\deg(s_i)\geq 1$, and thus there are at least $\ell$ elements in the set $\{r_i\}_{i=1}^\ell\cup \{s_i\}_{i=1}^\ell$ with positive dimension. It follows that $\len(R)+\len(S)\geq \ell.$
 
The equality $\len(R\times S)=\max\{\len(R),\len(S)\}$ is straightforward, because each $\eta\in R\times S$ is in the form $\eta=(r,s)$ for $r\in R$ and $s\in S$. Indeed, given $\eta_i=(r_i,s_i)$ for $i=1,\dots,\ell$, we have $\eta_1 \bullet \dots \bullet \eta_\ell\neq 0$, if and only if, either $r_1\bullet\dots \bullet r_\ell\neq 0$ or $s_1\bullet\dots \bullet s_\ell\neq 0$.

The other cases follow directly by recalling the facts $(f\otimes g)(R\otimes S)=f(R)\otimes g(S)$, $(f\times g)(R\times S)=f(R)\times g(S)$, $\Hfunc^*(\X\amalg \Y)\cong\Hfunc^*(\X)\times\Hfunc^*(\Y)$ and $\Hfunc^*(\X\times \Y)\cong\Hfunc^*(\X)\otimes\Hfunc^*(\Y)$, where the last one follows from \cite[Thm.~3.15]{hatcher2000} and the fact that we are using field coefficients.
\end{proof}

\subsection{Supplementary materials for \textsection \ref{sec:cup_l_func}}
\label{supp:sec_cup_l_func}

\begin{remark}\label{rmk:rep_cocycle} 
Let $\sigma_I$ be a representative cocycle associated to an interval $ I\in \caB(\Xfunc)$. Then, it follows from the definition of representative cocycles that 
for any $t\leq s$, $\Hfunc^*(\iota_t^s )([\sigma_I]_s)=\left[\sigma_I|_{C_p(\X_t)}\right]\neq 0\iff [t,s]\subset I$.
\end{remark}

%Prop.~\ref{prop:injective-surjective-length} shows that $\len$ is non-increasing under surjective morphisms and non-decreasing under injective morphisms, which makes it an \emph{inj-surj invariant} (see Defn.~\ref{defi:injective-surjective-inv}). As a consequence, for any persistetn graded ring $\Rfunc$, $\len(\Rfunc)$ defines a functor from $(\Int,\leq)$ to $(\N,\geq)$ (see Prop.~\ref{thm:Functoriality of persistent invariants}).
%In general settings, any inj-surj invariant, lifts to a persistent invariant. 

%Prop.~\ref{prop:generators-of-ring} allows us to compute the cohomology images of a persistent cohomology ring from representative cocycles. In particular, Prop.~\ref{prop:generators-of-ring} will be used for computing Persistent Cup-Length functions (see Ex.~\ref{ex:cup_l_func_klein} and Thm.~\ref{thm:tropical_mobius}). Prop.~\ref{prop:prod-coprod-pers-cup} allows us to distribute the computation of cup-length functions (see Ex.~\ref{ex:S1-Tn} and \S \ref{sec:erosion_T2_wedge}).
\propimagering*
\begin{proof}[Proof of Prop.~\ref{prop:generators-of-ring}] 
Given a space $\X$, the cohomology ring $\Hfunc^*(\X)\in \Ring$ is a graded ring generated by the graded cohomology vector space $\Hfunc^*(\X)\in \Vect$, under the operation of cup products. It is clear that any linear basis for $\Hfunc^*(\X)$ also generates the ring $\Hfunc^*(\X)$, under the cup product. Given an inclusion of spaces $\X\xhookrightarrow{\iota}\Y$, let $f:\Hfunc^*(\Y)\to \Hfunc^*(\X)$ denotes the induced cohomology ring morphism. Let $A$ be a linear basis for $\Hfunc^*(\Y)$. Since $A$ also generates $\Hfunc^*(\Y)$ as a ring, the image $f(A)$ generates $f(\Hfunc^*(\Y))$ as a ring. 

Now, let $\Hfunc^*(\iota_t^s ):\Hfunc^*(\X_s)\to \Hfunc^*(\X_t)$ denote the cohomology map induced by the inclusion $\iota_t^s:\X_t\hookrightarrow \X_s$. The set $A:=\{[\sigma_I]_s: s\in I\in\caB(\Xfunc)\}$ forms a linear basis for $\Hfunc^*(\X_s)$, and thus $\Hfunc^*(\iota_t^s )(A)$ generates $\mathbf{Im}(\Hfunc^*(\iota_t^s ))$ as a ring. On the other hand, it follows from Remark \ref{rmk:rep_cocycle} that 
$\Hfunc^*(\iota_t^s )(A)=\left\{\Hfunc^*(\iota_t^s  )\left([\sigma_I]_s\right): [t,s]\subset I\in\caB(\Xfunc)\right\}=\left\{[\sigma_I]_t: [t,s]\subset I\in\caB(\Xfunc)\right\}.$
\end{proof}

\propproperty*
\begin{proof}[Proof of Prop. \ref{prop:prod-coprod-pers-cup}]
By functoriality of products, disjoint unions, and wedge sums, we can define the persistent spaces: $\Xfunc\times\Yfunc:=(\{\X_t\times \Y_t\}_{t\in\R},\{f_{t}^{s}\times g_{t}^{s}\})$, $\Xfunc\amalg\Yfunc:=(\{\X_t\amalg \Y_t\}_{t\in\R},\{f_{t}^{s}\amalg g_{t}^{s}\})$, and $\Xfunc\vee\Yfunc:=(\{\X_t\vee \Y_t\}_{t\in\R},\{f_{t}^{s}\vee g_{t}^{s}\})$. 
We prove this proposition for the case when $\Int$ is the set of closed intervals. For other types of intervals, a similar proof follows.

Let $[a,b]\in \Int$. Using the contravariance property of the cohomology ring functor $\Hfunc^*$, we obtain:
\begin{align*}
    \cupprod\left(\Xfunc\times\Yfunc \right)([a,b])&=\len\left(\Hfunc^*(f_a^b\times g_{a}^{b})\right)\\
    &=\len\left(\Hfunc^*(f_a^b)\otimes \Hfunc^*(g_a^b)\right)\\
    &=\len\left(\Hfunc^*(f_{a}^{b})\right)+\len\left(\Hfunc^*(g_a^b)\right)\\
    &=\cupprod(\Xfunc)([a,b])+\cupprod(\Yfunc)([a,b]),
\end{align*}
\begin{align*}
    \cupprod\left(\Xfunc\amalg \Yfunc\right)([a,b])&=\len\left(\Hfunc^*(f_{a}^{b}\amalg g_{a}^{b})\right)\\
    &=\len\left(\Hfunc^*(f_{a}^{b})\times \Hfunc^*(g_{a}^{b})\right)\\
    &=\max\left\{\len\left(\Hfunc^*(f_{a}^{b})\right),\len\left(\Hfunc^*(g_{a}^{b})\right)\right\}\\
    &=\max\left\{\cupprod(\Xfunc)([a,b]),\cupprod(\Yfunc)([a,b])\right\}\text{, and}
    \\
        \cupprod\left(\Xfunc\vee \Yfunc\right)([a,b])&=\len\left(\Hfunc^*(f_a^b\vee g_{a}^{b})\right)\\
    &=\len\left(\Hfunc^*(f_{a}^{b})\times \Hfunc^*(g_{a}^{b})\right)\\
    &=\max\left\{\len\left(\Hfunc^*(f_{a}^{b})\right),\len\left(\Hfunc^*(g_{a}^{b})\right)\right\}\\
    &=\max\left\{\cupprod(\Xfunc)([a,b]),\cupprod(\Yfunc)([a,b])\right\}.
\end{align*}
\end{proof}

%%%%%%%%%%%%%%%%%%%%%%%%%%%%%%%%%%%%%%%%%%%%%%%%%%%%%%%%%%%%%%%%%%%%%%%%%%%%%%%%%%%%%%%%

\section{Proofs and details for \textsection \ref{sec:comp_cup_l} (persistent cup-length diagram)}
\label{sec:details_persistent cup-length diagram}

This section is a supplement of \textsection \ref{sec:comp_cup_l}, including the proofs for results in \textsection \ref{sec:cup_dgm}, examples of persistent cup-length diagrams and details about the main algorithm.

%\section{Supplement for Section \ref{sec:comp_cup_l}}
\subsection{Proof of Thm.~\ref{thm:tropical_mobius} and other results in \textsection \ref{sec:cup_dgm}}
\label{sec:proof_cup_dgm}

\begin{proposition} \label{prop:propertiy_of_*_sigma}
Let $\langle b_i,d_i\rangle$, $i=1,\dots,\ell$, be as in Defn.~\ref{dfn:ell-product}. 
The $\ell$-fold $*_{\bsigma}$-product of $\langle b_1,d_1\rangle,\dots,\langle b_\ell,d_\ell\rangle$  is symmetric, i.e.~for any permutation $\rho$ of $\{1,2,\dots,\ell\}$, 
\[\langle b_1,d_1\rangle*_{\bsigma}\dots*_{\bsigma}\langle b_\ell,d_\ell\rangle=\langle b_{\rho(1)},d_{\rho(1)}\rangle*_{\bsigma}\dots*_{\bsigma}\langle b_{\rho(\ell)},d_{\rho(\ell)}\rangle.\]
\end{proposition}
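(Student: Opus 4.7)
The plan is to reduce the claim to the graded commutativity of the cup product at the level of cohomology. Recall that for any topological space $\X$ and any $p$-cocycle $\alpha$, $q$-cocycle $\beta$ in $C^*(\X)$, one has the cohomology identity
\[
[\alpha] \smile [\beta] \;=\; (-1)^{pq}\, [\beta] \smile [\alpha]
\]
in $\Hfunc^{p+q}(\X)$ (see \cite[Thm.~3.11]{hatcher2000}). In particular, $[\alpha]\smile[\beta]=0$ if and only if $[\beta]\smile[\alpha]=0$, and when $K=\Z_2$ the sign is irrelevant.

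First I would fix $t\in\R$ and consider the cohomology classes $[\sigma_{I_1}]_t,\dots,[\sigma_{I_\ell}]_t$ in $\Hfunc^*(\X_t)$ (with the convention $[\sigma_{I_i}]_t=0$ when $t$ lies beyond the right end of $I_i$). Since any permutation $\rho$ of $\{1,\dots,\ell\}$ factors as a finite composition of adjacent transpositions, it suffices to handle the case where $\rho$ swaps two consecutive indices $i$ and $i+1$. In that case, applying associativity together with the graded commutativity formula above to the two middle factors yields
\[
[\sigma_{I_{\rho(1)}}]_t \smile \cdots \smile [\sigma_{I_{\rho(\ell)}}]_t
\;=\; (-1)^{p_i p_{i+1}}\,
[\sigma_{I_1}]_t \smile \cdots \smile [\sigma_{I_\ell}]_t,
\]
where $p_j$ denotes the cohomological dimension of $\sigma_{I_j}$. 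Iterating across the sequence of transpositions that realizes $\rho$, one obtains
\[
[\sigma_{I_{\rho(1)}}]_t \smile \cdots \smile [\sigma_{I_{\rho(\ell)}}]_t
\;=\; \pm\, [\sigma_{I_1}]_t \smile \cdots \smile [\sigma_{I_\ell}]_t,
\]
so the left-hand side vanishes in $\Hfunc^*(\X_t)$ precisely when the right-hand side does.

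Finally, because the preceding equivalence holds at every $t\in\R$, the two sets
\[
\{t\in\R \mid [\sigma_{I_1}]_t\smile\cdots\smile[\sigma_{I_\ell}]_t\neq [0]_t\}
\quad\text{and}\quad
\{t\in\R \mid [\sigma_{I_{\rho(1)}}]_t\smile\cdots\smile[\sigma_{I_{\rho(\ell)}}]_t\neq [0]_t\}
\]
coincide, which is exactly the equality of $\ell$-fold $*_{\bsigma}$-products asserted in the statement. No genuine obstacle arises here: the argument is essentially bookkeeping on top of graded commutativity, and the only subtlety to keep in mind is that graded commutativity does not hold at the cochain level (where one would need cup-$i$ products), but only after passing to cohomology classes, which is exactly the setting of $*_{\bsigma}$.
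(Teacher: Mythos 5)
Your proof is correct and follows essentially the same route as the paper's: both reduce the claim to (graded) commutativity of the cup product, observing that the support in Defn.~\ref{dfn:ell-product} depends only on the cohomology classes $[\sigma_{I_i}]_t$. If anything, your version is the more careful one, since the paper's proof loosely asserts $\alpha\smile\beta=(-1)^s\beta\smile\alpha$ at the cochain level (where it fails in general), whereas you correctly invoke graded commutativity only after passing to cohomology and check the resulting equivalence at every $t$, which is exactly what equality of the two supports requires.
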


\begin{proof}[Proof of Prop.~\ref{prop:propertiy_of_*_sigma}]
The proof follows by (i) the symmetry of $\max$ and $\min$, and (ii) by applying the property of cup product that: for any pair $\alpha,\beta$ of cochains, \[\alpha\smile\beta=(-1)^s\beta\smile\alpha\text{, for some integer }s,\]
which in turn, implies that for sufficiently small $\delta>0$,
\[[\sigma_{I_1}\smile\dots\smile\sigma_{I_\ell}]_{\min\{d_1,\dots,d_\ell\}-\delta}=[0]\Leftrightarrow [\sigma_{I_{\rho(1)}}\smile\dots\smile\sigma_{I_{\rho(\ell)}}]_{\min\{d_1,\dots,d_\ell\}-\delta}=[0],\]
for any permutation $\tau$ of $\{1,2,\dots,\ell\}$. \end{proof}

Recall:
\mobuis*
\begin{proof}[Proof of Thm.~\ref{thm:tropical_mobius}]
\label{pf:tropical_mobius} 
Without loss of generality, the theorem is proved in the case when $\Int$ is the set of closed intervals. For other cases, a similar discussion follows.
\medskip

Let $I:=[a,b]$ be a closed interval. If $\cupprod(\Xfunc)([a,b])=0$, then the image ring $\mathbf{Im}\left(\Hfunc^*(\X_b)\to \Hfunc^*(\X_a)\right)$ is trivial in positive dimensions. We claim that for any $[c,d]\supset[a,b]$, $\mathbf{dgm}_{\bsigma}^{\smile}(\Xfunc)([c,d])=0$. Assume not, then $\mathbf{dgm}_{\bsigma}^{\smile}(\Xfunc)([c,d])>0$ for some $[c,d]\supset[a,b]$, which necessarily means that there is a bar associated with a positive-dimensional cocycle that contains $[a,b]$. This contradicts with $\mathbf{Im}\left(\Hfunc^{\geq1}(\X_b)\to \Hfunc^{\geq1}(\X_a)\right)=0.$
\medskip

We now assume $\cupprod(\Xfunc)([a,b])\neq0$ and define 
\[B:=\{[\sigma_J]_a\mid \caB_{\geq 1}(\Xfunc)\ni J\supseteq [a,b]\}.\]
Recall that for $J=[c,d]$ in the barcode, $\sigma_J$ is a cocycle in $\X_d$ and $[\sigma_J]_a$ is the cohomology class of the restriction $\sigma_J|_{C_p(\X_a)}$, if the dimension of $\sigma_J$ is $p$.
Then,
\begin{align}
\cupprod(\Xfunc)([a,b]) 
=\,& \len\left(\mathbf{Im}\left(\Hfunc^*(\X_b)\to \Hfunc^*(\X_a)\right)\right) \label{eq1}  \\
=\,& \len\left(\langle B \rangle\right) \label{eq2}  \\
=\,& \max \left\{\ell\in\N^*\mid B^{\smile\ell}\neq \{0\}\right\} \label{eq3} 
\end{align}
Eqn.~(\ref{eq1}) follows from the definition of the persistent cup-length function, and Eqn.~(\ref{eq2}) is a direct application of Prop.~\ref{prop:generators-of-ring}, where $\langle\cdot\rangle$ denotes the generating set of a ring. 
Because $B$ linearly spans the image $\mathbf{Im}(\Hfunc^{\geq1}(\X_b)\to \Hfunc^{\geq1}(\X_a))$ in each dimension, the assumption of Prop.~\ref{prop:len_of_basis} is satisfied and thus Eqn.~(\ref{eq3}) follows. 

\medskip

Given $J_1,\dots,J_\ell\in \caB_{\geq1}(\Xfunc)$ such that $J_i\supseteq[a,b]$ for each $i$, we claim that
\[[\sigma_{J_1} ]_a\smile\dots\smile[\sigma_{J_\ell} ]_a\neq 0 \iff \operatorname{supp}(\sigma_{J_1} \smile\dots\smile \sigma_{J_\ell})\supset [a,b].\]
The `$\Leftarrow$' is trivial. As for `$\Rightarrow$', recall from Prop.~\ref{prop:support} that in this case the support is a non-empty interval with its right end equal to the right end of $\cap_i J_i\supset[a,b]$. It follows that the support, as an interval, contains both $a$ and $b$, and thus containing $[a,b]$.

Therefore, we have Eqn.~(\ref{eq4}) below:
\begin{align}
&\cupprod(\Xfunc)([a,b]) \nonumber\\
=\, & \max \left\{\ell\in\N^*\mid B^{\smile\ell}\neq \{0\}\right\} \nonumber\\
=\,& \max \left\{\ell\in\N^*\mid [\sigma_{J_1} ]_a\smile\dots\smile[\sigma_{J_\ell} ]_a\neq 0,\,J_i\supseteq[a,b],\, J_i\in \caB_{\geq1}(\Xfunc),\, \forall i=1,\dots,\ell \right\} \nonumber\\
=\,& \max \left\{\ell\in\N^*\mid \operatorname{supp}(\sigma_{J_1}\smile\dots\smile\sigma_{J_\ell})\supseteq[a,b],\,J_i\in\caB_{\geq1}(\Xfunc),\, \forall i=1,\dots,\ell \right\} \label{eq4}\\
 =\,& \max_{[c,d]\supseteq[a,b]} \left\{\max\left\{\ell\in\N^*\mid [c,d]=\operatorname{supp}(\sigma_{J_1}\smile\dots\smile\sigma_{J_\ell}) \text{, where }J_i\in\caB_{\geq1}(\Xfunc)\right\}
 \right\} \nonumber\\
 =\,& \max_{[c,d]\supseteq[a,b]} \left\{\max\left\{\ell\in\N^*\mid [c,d]=J_1*_{\bsigma}\dots *_{\bsigma} J_\ell \text{, where }J_i\in\caB_{\geq1}(\Xfunc)\right\}
 \right\} \label{eq-second-last}\\
 =\,& \max_{[c,d]\supseteq[a,b]}\mathbf{dgm}_{\bsigma}^{\smile}(\Xfunc)([c,d]). \label{eq-last}
\end{align}
Here Eqn.~(\ref{eq-second-last}) and Eqn.~(\ref{eq-last}) follow from the definition of the $*_{\bsigma}$ operation (Defn.~\ref{dfn:ell-product}) and the definition of the persistent cup-length diagram (Defn.~\ref{def:cup_dgm}), respectively.
\end{proof} 

\endsofsupport*
\begin{proof} 
\label{pf:support}
We prove in the case of closed intervals. For the other types of intervals, the statement follows from a similar discussion. 

Let $d$ be the right end of $\cap_{1\leq i\leq \ell}I_i$. Clearly, any $t>d$ is not in $I$, because there is some $I_i$ such that $[\sigma_{I_i}]_t=[0]_t$. To show $d$ is the right end of $I$, it suffices to show that $d$ is in $I$. 
If $d\notin I$, then it follows from $[ \sigma_{I_1}]_d \smile\dots\smile [\sigma_{I_\ell}]_d = [0]_d$ that $[ \sigma_{I_1}]_t \smile\dots\smile [\sigma_{I_\ell}]_t = [0]_t$ for all $t\leq d$. Thus, $I=\emptyset$, which gives a contradiction. Therefore, $d$ is the right end of $I$.

We show that $I$ in an interval, i.e. for any $t\in I$ and $s\in [t,d]$, we have $s\in I$. This is true because $[ \sigma_{I_1}]_s \smile\dots\smile [\sigma_{I_\ell}]_s$, as the preimage of a non-zero element $[ \sigma_{I_1}]_t \smile\dots\smile [\sigma_{I_\ell}]_t$, cannot be zero.

Assume the left end of $I$ is $b$. Then $[ \sigma_{I_1}]_b \smile\dots\smile [\sigma_{I_\ell}]_b\neq 0$ but $[ \sigma_{I_1}]_{b-\epsilon} \smile\dots\smile [\sigma_{I_\ell}]_{b-\epsilon}=0$ for any $\epsilon>0$. Notice that we can write the cup product $[\sigma_{I_1}]_b \smile\dots\smile [\sigma_{I_\ell}]_b=\sum \lambda_J [\sigma_J]_b$ for some coefficients $\lambda_J$ and distinct representative cocycles $\sigma_J$ with $[\sigma_J]_b\neq 0$, where $J\in \caB(\Xfunc)$. For any $\epsilon>0$, it follows from $[ \sigma_{I_1}]_{b-\epsilon} \smile\dots\smile [\sigma_{I_\ell}]_{b-\epsilon}=0$ and the linear independence of $[\sigma_J]_{b-\epsilon}$ that $[\sigma_J]_{b-\epsilon}=0$ for every $J$. Thus, these $J$ are bars with left end equal to $b$. 
\end{proof}

\paragraph*{The cohomology ring of $\mathbb{RP}^2$} \label{app:computation for RP2}

Consider the cell complex structure on $\mathbb{RP}^2$, given by one $0$-cell $e^0$, one $1$-cell $e^1$ and one $2$-cell $e^2$, and the attaching maps: $\varphi_1:\partial e^1\to e^0$ is a constant map and $\varphi_2:\partial e^2\to e^1$ identifies the antipodal points. 
By \cite[Thm.~3.19]{hatcher2000}, $\Hfunc^*(\mathbb{RP}^2;\Z_2)\cong \Z_2[\alpha]/(\alpha^3)$.
For another copy of $\mathbb{RP}^2$, denote its cells by $\bar{e}^0$, $\bar{e}^1$ and $\bar{e}^2$ and attaching maps by $\bar{\varphi}_1$ and $\bar{\varphi}_2$. Then $\Hfunc^*(\mathbb{RP}^2;\Z_2)\cong \Z_2[\beta]/(\beta^3)$. 

Define 
\begin{itemize}
    \item the wedge sum $\mathbb{RP}^2\vee \mathbb{RP}^2$ to be the union $\mathbb{RP}^2\cup \mathbb{RP}^2$ quotient by the relation $e^0\sim \bar{e}^0$;
    \item the product $\mathbb{RP}^2\vee \mathbb{RP}^2$ to be a cell complex with cells the products $e^i\times e^j$ and attaching maps $\varphi_i\times \bar{\varphi}_j$. 
\end{itemize}
Notice that there is an inclusion $\mathbb{RP}^2\vee \mathbb{RP}^2\hookrightarrow \mathbb{RP}^2\vee \mathbb{RP}^2$ given by $e^0=\bar{e}^0\mapsto e^0\times e^0$, $e^i\mapsto e^i\times \bar{e}^j$ and $\bar{e}^j\mapsto e^0\times\bar{e}^j.$

For the wedge sum, we have $\Hfunc^*(\mathbb{RP}^2\vee \mathbb{RP}^2;\Z_2)\cong \Z_2[\alpha,\beta]/(\alpha^3,\alpha\beta,\beta^3)$.
For the $2$-skeleton of the product space, we claim that $\Hfunc^*(S_2(\mathbb{RP}^2\times \mathbb{RP}^2);\Z_2)$ is a linear space over $\Z_2$ with basis $\{ 1,\alpha,\beta,\alpha\smile\alpha,\alpha\smile\beta,\beta\smile\beta\}$.
It is clear that $\Hfunc^ {\leq 1}(S_2(\mathbb{RP}^2\times \mathbb{RP}^2);\Z_2)=\Hfunc^ {\leq 1}(\mathbb{RP}^2\times \mathbb{RP}^2;\Z_2)$.
By \cite[Thm.~3.15]{hatcher2000}, the cohomology ring $\Hfunc^*(\mathbb{RP}^2\times \mathbb{RP}^2;\Z_2)$ is isomorphic to $\Z_2[\alpha]/(\alpha^3)\otimes\Z_2[\beta]/(\beta^3)$, It follows that $\{\alpha\smile\alpha,\alpha\smile\beta,\beta\smile\beta\}$ is a linearly independent set of $2$-cocycles, which remains true when the space is truncated up to dimension $2$. 
On the other hand, the 2-skeleton of $\mathbb{RP}^2\times \mathbb{RP}^2$ has only three $2$-cells, implying that the dimension of the $\Hfunc^ {2}(S_2(\mathbb{RP}^2\times \mathbb{RP}^2);\Z_2)$ cannot go over $3$. Thus, $\{\alpha\smile\alpha,\alpha\smile\beta,\beta\smile\beta\}$ forms a basis for $\Hfunc^ {2}(S_2(\mathbb{RP}^2\times \mathbb{RP}^2);\Z_2)$.

\subsection{More examples of persistent cup-length diagrams}\label{Supplement-examp}
\begin{example}[$\mathbf{dgm}_{\bsigma}^{\smile}(\cdot)$ more informative than standard barcode]
\label{ex:sigma-diag-klein}
Recall from Fig.~\ref{fig:klein-function-diagram} the filtration $\mathbf{X}=\{\mathbb{X}_t\}_{t\geq0}$ of the pinched Klein bottle. 
% \[
% \mathbf{dgm}_{\bsigma}^{\smile}(\Xfunc)(I)= 
%       \begin{cases}
%     1, & \mbox{ if } I=[1,3) \\
%     2, & \mbox{ if } I=[2,3) \text { or } I=[2,\infty)\\
%     0, & \mbox{otherwise.} 
%      \end{cases}
% \] 
In Fig.~\ref{fig:dgm_sigma-dgm-klein}, we compare the persistent cup-length diagram  $\mathbf{dgm}_{\bsigma}^{\smile}(\Xfunc)$ with the standard persistence diagram $\mathbf{dgm}^{\geq 1}(\Xfunc)$, and see that the former contains more information than the latter. The two diagrams differs only by their values on $[2,3)$. Whereas $\mathbf{dgm}^{\geq 1}(\Xfunc)([2,3))=0$, the value $\mathbf{dgm}_{\bsigma}^{\smile}(\Xfunc)([2,3))$ is non-zero.
% Notice that in this example, the persistent cup-length diagram contains more information than the standard persistence diagram in positive dimensions, obtained from the rank invariant and denoted by $\mathbf{dgm}^{\geq 1}(\Xfunc)$. See .
    \begin{figure}[H]
    %\centering
    \begin{tikzpicture}[scale=0.65]
    \begin{axis} [ 
    %ticklabel style = {font=\small},
    axis y line=middle, 
    axis x line=middle,
    ytick={1,2,3,4,5.3},
    yticklabels={$1$,$2$,$3$,$4$,$\infty$},
    xticklabels={$1$,$2$,$3$,$4$,$\infty$},
    xtick={1,2,3,4,5.3},
    xmin=0, xmax=5.5,
    ymin=0, ymax=5.5,]
    \addplot [mark=none] coordinates {(0,0) (5.3,5.3)};
  \addplot[dgmcolor!60!white,mark=*] (1,3) circle (2pt) node[below,black]{\textsf{1}};
    \addplot[dgmcolor!100!white,mark=*] (2,3) circle (2pt) node[below,black]{\textsf{2}};
  \addplot[dgmcolor!100!white,mark=*] (2,5.3) circle (2pt) node[below,black]{\textsf{2}};
    \node[mark=none] at (axis cs:4,2){\LARGE$\mathbf{dgm}_{\bsigma}^{\smile}(\Xfunc)$};
    \end{axis}
    \end{tikzpicture}
    \hspace{1.5cm}
    \begin{tikzpicture}[scale=0.65]
    \begin{axis} [ 
    %ticklabel style = {font=\small},
    axis y line=middle, 
    axis x line=middle,
    ytick={1,2,3,4,5.3},
    yticklabels={$1$,$2$,$3$,$4$,$\infty$},
    xticklabels={$1$,$2$,$3$,$4$,$\infty$},
    xtick={1,2,3,4,5.3},
    xmin=0, xmax=5.5,
    ymin=0, ymax=5.5,]
    \addplot [mark=none] coordinates {(0,0) (5.3,5.3)};
  \addplot[blue!60!white,mark=*] (1,3) circle (2pt) node[below,black]{\textsf{1}};
  \addplot[blue!100!white,mark=*] (2,5.3) circle (2pt) node[below,black]{\textsf{2}};
  \node[mark=none] at (axis cs:4,2){\LARGE$\mathbf{dgm}^{\geq 1}(\Xfunc)$};
    \end{axis}
    \end{tikzpicture}
    \caption{The persistent cup-length diagram $\mathbf{dgm}_{\bsigma}^{\smile}(\Xfunc)$ (left) and the positive-dimension persistence diagrams $\mathbf{dgm}^{\geq 1}(\Xfunc)$ (right), for $\mathbf{X}$ the filtration of a pinched Klein bottle. See Ex.~\ref{ex:sigma-diag-klein}.}
    \label{fig:dgm_sigma-dgm-klein}
    \end{figure}
\end{example}

\begin{example}[$\cupprod(\cdot)$ v.s. $\mathbf{dgm}_{\bsigma}^{\smile}(\cdot)$]
\label{ex:sigma-diag-disks}
Let $\mathbf{X}=\{\X_t\}_{t\geq0}$ be a filtration of the disjoint union of two $2$-disks, as shown in Fig.~\ref{fig:dcircle-filt}.  
Note that the barcodes of $\Xfunc$ in positive dimensions are: $\caB_1(\Xfunc)=\{[0,2),[1,3)\}$ and let $\bsigma:=\{\alpha,\beta\}$ denote the representative cocycles with $\alpha$ and $\beta$ corresponding to the top and bottom circle in Fig.~\ref{fig:dcircle-filt}, respectively. Then we compute 
\[
\mathbf{dgm}_{\bsigma}^{\smile}(\Xfunc)(I):= 
      \begin{cases}
    1, & \mbox{ if } I=[0,2) \text{ or } I=[1,3)\\
    0, & \mbox{otherwise.} 
     \end{cases}
\]
We compute persistent cup-length function $\cupprod(\Xfunc)$ using Thm.~\ref{thm:tropical_mobius}, and compare it with the persistent cup-length diagram $\mathbf{dgm}_{\bsigma}^{\smile}(\Xfunc)$ in Fig.~\ref{fig:ex-dgm-disks}.

\begin{figure}[H]
%\centering
\begin{tikzcd}[column sep=small,row sep=tiny]
    %\text{\small{$t<0$}}& 
    \text{\small{$t\in [0,1)$}} 
    & \text{\small{$t\in [1,2)$}} 
    & \text{\small{$t\in [2,3)$}} 
    & \text{\small{$t\geq 3$}} 
    \\
\begin{tikzpicture}[scale=0.5]
	   \filldraw[fill=none,thick](0,0) circle (1);
    \end{tikzpicture}
	& 
	\begin{tikzpicture}[scale=0.5]
	   \filldraw[fill=none,thick](0,0) circle (1);
	   \filldraw[fill=none,thick](0,-2.5) circle (1);
    \end{tikzpicture}
	& 
	\begin{tikzpicture}[scale=0.5]
	   \filldraw[fill=black!30,thick](0,0) circle (1);
	   \filldraw[fill=none,thick](0, -2.5) circle (1);
    \end{tikzpicture}
    & 
	\begin{tikzpicture}[scale=0.5]
	   \filldraw[fill=black!30,thick](0,0) circle (1);
	   \filldraw[fill=black!30,thick](0,-2.5) circle (1);
    \end{tikzpicture}
\end{tikzcd}
\begin{tikzcd}[column sep=small,row sep=tiny]
\\
  \begin{tikzpicture}[scale=0.8]
    \begin{axis} [ 
%    %ticklabel style = {font=\small},
    height=4cm,
    width=9cm,
    hide y axis,
    axis x line*=bottom,
    xtick={0,1,2,3,4},
    xticklabels={$0$, $1$, $2$, $3$, $4$}, 
    xmin=-1.5, xmax=5.5,
    ymin=0, ymax=1.1,]
%  \addplot [mark=none,thick] coordinates {(4.95,.2) (1.05,.2)};
%      \addplot [mark=none,thick] coordinates {(0.05,.4) (4.95,.4)};
        \addplot [mark=none,thick] coordinates {(0.05,.8) (1.95,.8)};
    \addplot [mark=none,thick] coordinates {(1.05,.4) (2.95,.4)};
%    \addplot [mark=*] coordinates {(1,.2)};
%    \addplot [mark=*] coordinates {(0,.4)};
    \addplot [mark=*] coordinates {(0,.8)};
    \addplot [mark=*] coordinates {(1,.4)};
    \addplot [mark=o] coordinates {(3,.4)};
    \addplot [mark=o] coordinates {(2,.8)};
    \end{axis}
    \end{tikzpicture}
\end{tikzcd}
    \caption{A filtration of the disjoint union of two $2$-disks, and its barcode in positive dimensions, see Ex.~\ref{ex:sigma-diag-disks}.}
\label{fig:dcircle-filt}
\end{figure}
\begin{figure}%[H]
%\centering
    \begin{tikzpicture}[scale=0.65]
    \begin{axis} [ 
    %ticklabel style = {font=\small},
    axis y line=middle, 
    axis x line=middle,
    ytick={1,2,3,4,5.3},
    yticklabels={$1$,$2$,$3$,$4$,$\infty$},
    xticklabels={$1$,$2$,$3$,$4$,$\infty$},
    xtick={1,2,3,4,5.3},
    xmin=0, xmax=5.5,
    ymin=0, ymax=5.5,]
    \addplot [mark=none] coordinates {(0,0) (5.3,5.3)};
  \addplot[dgmcolor!60!white,mark=*] (1,3) circle (2pt) node[right,black]{\textsf{1}};
    \addplot[dgmcolor!60!white,mark=*] (0,2) circle (2pt) node[right,black]{\textsf{1}};
    \end{axis}
    \end{tikzpicture}\hspace{1.5cm}
\begin{tikzpicture}[scale=0.65]
    \begin{axis} [ 
    %ticklabel style = {font=\small},
    axis y line=middle, 
    axis x line=middle,
    ytick={1,2,3,4,5.3},
    xtick={1,2,3,4,5.3},
    yticklabels={$1$,$2$,$3$,$4$,$\infty$},
    xticklabels={$1$,$2$,$3$,$4$,$\infty$},
    xmin=0, xmax=5.5,
    ymin=0, ymax=5.5,]
    \addplot [mark=none] coordinates {(0,0) (5.3,5.3)};
    \addplot [thick,color=dgmcolor!20!white,fill=dgmcolor!20!white, 
                    fill opacity=0.45]coordinates {
            (1,3) 
            (1,2)
            (0,2)  
            (0,0)
            (3,3)
            (1,3)};
    \node[mark=none] at (axis cs:1.2,1.8){\textsf{1}};
    \end{axis}
    \end{tikzpicture}
    \caption{The persistent cup-length diagram $\mathbf{dgm}_{\bsigma}^{\smile}(\Xfunc)$ (left) and the persistent cup-length function $\cupprod(\Xfunc)$ (right), for $\mathbf{X}$ the filtration of the disjoint union of two ($2$-dim) disks. See Ex.~\ref{ex:sigma-diag-disks}.}
\label{fig:ex-dgm-disks}
\end{figure}
\end{example}

\begin{example} \label{ex:n_torus}
Let $\Xfunc=\{\X_t\}_{t\geq 0}$ be a filtration given by
$\X_t:= \underbrace{\bbS^1\times \bbS^1\times\dots\times \bbS^1}_{\lfloor t\rfloor \text{times}},$ for $t\geq 0$. For any $t\leq s$, $\X_t\hookrightarrow \X_s$ is given by the natural inclusion.
When $t<1$, $\Hfunc^*(\X_t)$=0. When $t\geq 1$, there are $\lfloor t\rfloor$ linearly independent $1$-cocycles $\eta_1,\dots,\eta_{\lfloor t\rfloor}$. 
Notice that the set $\bsigma:=
\{\eta_{i_1}\smile\dots\smile\eta_{i_j}\}_{1\leq i_1\leq \dots\leq i_j\leq \lfloor t\rfloor}$ forms a family of representative cocycles for $\Hfunc^*(\Xfunc)$ in positive dimensions, with which we compute the persistent cup-length diagram $\mathbf{dgm}_{\bsigma}^{\smile}(\Xfunc)$ and present it in left figure of Fig.~\ref{fig:ex_dgm}. Then we apply Thm.~\ref{thm:tropical_mobius} to obtain that $\cupprod(\Xfunc)([t,s]) =\cupprod(\X_t)= \lfloor t \rfloor$, if $1\leq t\leq s$ and $0$ otherwise, and we plot the persistent cup-length function $\cupprod(\Xfunc)$ in the right figure of Fig.~\ref{fig:ex_dgm}.
\begin{figure}%[H]
%\centering
    \begin{tikzpicture}[scale=0.65]
    \begin{axis} [ 
    %ticklabel style = {font=\small},
    axis y line=middle, 
    axis x line=middle,
    ytick={1,2,3,4,5,6.3},
    yticklabels={$1$,$2$,$3$,$4$,$5$,$\infty$},
    xticklabels={$1$,$2$,$3$,$4$,$5$,$\infty$},
    xtick={1,2,3,4,5,6.3},
    xmin=0, xmax=6.5,
    ymin=0, ymax=6.5,]
    \addplot [mark=none] coordinates {(0,0) (6.3,6.3)};
    \addplot[dgmcolor!100!white,mark=*] (5,6.3) circle (2pt) node[below,black]{\textsf{5}};
    \addplot[dgmcolor!80!white,mark=*] (4,6.3) circle (2pt) node[below,black]{\textsf{4}};
    \addplot[dgmcolor!60!white,mark=*] (3,6.3) circle (2pt) node[below,black]{\textsf{3}};
    \addplot[dgmcolor!40!white,mark=*] (2,6.3) circle (2pt) node[below,black]{\textsf{2}};
    \addplot[dgmcolor!20!white,mark=*] (1,6.3) circle (2pt) node[below,black]{\textsf{1}};
    \addplot[mark=none] (5.7,6.3) node[color=dgmcolor!100!white]{$\dots$};
    \end{axis}
    \end{tikzpicture}
    \hspace{1.5cm}
    \begin{tikzpicture}[scale=0.65]
    \begin{axis} [ 
    %ticklabel style = {font=\small},
    axis y line=middle, 
    axis x line=middle,
    ytick={1,2,3,4,5,6.3},
    yticklabels={$1$,$2$,$3$,$4$,$5$,$\infty$},
    xticklabels={$1$,$2$,$3$,$4$,$5$,$\infty$},
    xtick={1,2,3,4,5,6.3},
    xmin=0, xmax=6.5,
    ymin=0, ymax=6.5,]
    \addplot [mark=none] coordinates {(0,0) (6.3,6.3)};
    \addplot [thick,color=dgmcolor!20!white,fill=dgmcolor!20!white, 
                    fill opacity=0.45]coordinates {
            (1,6.3) 
            (1,1)
            (2,2)  
            (2,6.3)};
    \addplot [thick,color=dgmcolor!40!white,fill=dgmcolor!40!white, 
                    fill opacity=0.45]coordinates {
            (2,6.3) 
            (2,2)
            (3,3)  
            (3,6.3)};
    \addplot [thick,color=dgmcolor!60!white,fill=dgmcolor!60!white, 
                    fill opacity=0.45]coordinates {
            (3,6.3) 
            (3,3)
            (4,4)
            (4,6.3)};
    \addplot [thick,color=dgmcolor!80!white,fill=dgmcolor!80!white, 
                    fill opacity=0.45]coordinates {
            (4,6.3) 
            (4,4)
            (5,5)
            (5,6.3)};
    \addplot [thick,color=dgmcolor!100!white,fill=dgmcolor!100!white,
                    fill opacity=0.45]coordinates {
            (5,6.3) 
            (5,5)
            (6,6)
            (6,6.3)};
    \addplot [thick,color=dgmcolor!100!white,fill=dgmcolor!100!white,
                    fill opacity=0.45]coordinates {
            (6,6.3) 
            (6,6)
            (6.3,6.3)};
    
    \node[mark=none] at (axis cs:5.5,6){\textsf{5}};
    \node[mark=none] at (axis cs:4.5,5){\textsf{4}};
    \node[mark=none] at (axis cs:3.5,4){\textsf{3}};
    \node[mark=none] at (axis cs:2.5,3){\textsf{2}};
    \node[mark=none] at (axis cs:1.5,2){\textsf{1}};
    \end{axis}
    \end{tikzpicture}
\caption{The persistent cup-length diagram $\mathbf{dgm}_{\bsigma}^{\smile}(\Xfunc)$ (left) and the persistent cup-length function $\cupprod(\Xfunc)$ (right), where $\Xfunc$ is a filtration of torus in all dimensions. See Ex.~\ref{ex:n_torus}.} 
\label{fig:ex_dgm}
\end{figure}
\end{example}

\subsection{Supplementary materials for the algorithms}\label{sec:supp_algorithm}

%\RestyleAlgo{ruled}
\begin{algorithm}[H]
\SetKwData{Left}{left}\SetKwData{This}{this}\SetKwData{Up}{up}
\SetKwInOut{Input}{Input}\SetKwInOut{Output}{Output}
\Input{A column reduced square matrix $R$.}
\Output{The row operation matrix $U$}%, and a vector $P$ such that $P(i)=1$ if $i\in \mathrm{Pivots}(R)$ and $0$ otherwise.}

\BlankLine
%$\Lambda\gets R$\;
$m\gets \mathrm{size}(R)$\;
$U\gets I_{m\times m}$\;
$P\gets \mathrm{zeros}(1,m)$\;
\For(\tcp*[f]{$O( m )$}){$j\leq m$}{
    $v\gets j$-th column of $R$\;
    $J\gets \{i:v[i]\neq 0,i\leq j\}$ \tcp*{$O( m )$}
    %$P[J[\mathrm{end}]]\gets 1$\; 
    \If{$J\neq \emptyset$}{
        \For(\tcp*[f]{$O( m )$}){$i\in J[\mathrm{first}:\mathrm{end}-1]$ }{
            $R[i]\gets (R[i]+R[J[\mathrm{end}]])\mod 2$\;
            $U[i,J[\mathrm{end}]]\gets 1$\;
            }
        } 
    }
\Return $U$.
\caption{$\mathrm{RowReduce}(R)$}
\label{alg:row_reduction}
\end{algorithm}

%\begin{remark}\label{rmk:complexity_row_red}
It is clear that the complexity of Alg.~\ref{alg:row_reduction} is $O(m\cdot 2m)=O( m ^2)$.
%\end{remark}

\iscoboundary*
\begin{proof}[Proof of Prop.~\ref{prop:is_coboundary}]
Take a positive integer $p$. Let $\delta $ denote the coboundary map in $\X$, and let $\delta^p $ denote the dimension-$p$ coboundary map. Then $\delta =\oplus_{p} \delta^p $. Given a $p$-cochain $\sigma$, we can write $\sigma=S^*\cdot y$ for some vector $y\in \Z_2^{m}$. Let $R$ be the column reduced coboundary matrix, let $U$ be the row reduction matrix for $R$, which can be computed by Alg.~\ref{alg:row_reduction}. Then 
\begin{align*}
    \exists \text{ a $(p-1)$-cochain } \tau \text{ s.t. }\sigma=\delta^p \tau &\iff \exists \text{ a $(p-1)$-cochain } \tau \text{ s.t. }\sigma=\delta \tau\\
    &\iff \exists \xi\in \Z_2^{m} \text{ s.t. }y=A  \xi\\
    &\iff \exists \bar{\xi}\in \Z_2^{m} \text{ s.t. }U  y=U R \bar{\xi}\\
    &\iff \{i:(U  y)(i)\neq 0\}\subset
    \mathrm{Pivots}(U R )(=\mathrm{Pivots}(R )).
\end{align*}
\end{proof}

% \begin{proof}%[Proof of Prop.~\ref{prop:is_coboundary}]
% Let 
% $\delta^p $ denote the $p$-dimensional coboundary map. 
% Then 
% \begin{align*}
%     \text{$\sigma=S^*\cdot y$ is a coboundary}
%     &\iff\exists \text{ a $(p-1)$-cochain } \tau \text{ s.t. }\sigma=\delta^p \tau \\
%     &\iff \exists \text{ a $(p-1)$-cochain } \tau \text{ s.t. }\sigma=\delta \tau\\
%     &\iff \exists \xi\in \Z_2^{m} \text{ s.t. }y= A \xi =R V^{-1} \xi \\
%     &\iff \exists \bar{\xi}\in \Z_2^{m} \text{ s.t. }y = R\bar{\xi} = U^{-1}\Lambda \bar{\xi}\\
%     &\iff \exists \bar{\xi}\in \Z_2^{m} \text{ s.t. }Uy= \Lambda \bar{\xi}\\
%     &\iff \{i:(Uy)(i)\neq 0\}\subset \{i:\text{the i-th row of }\Lambda \neq 0\}
%     (=\mathrm{Pivots}(R ))\\
%     &\iff (P- Uy)(i)\geq 0,\forall i.
% \end{align*}
% \end{proof}

\subparagraph*{Using boundary matrix reduction $\Lambda'=U'A^{a\intercal}V'$.} 
\label{para:boundary_reduction}
The boundary matrix associated to $S$ is given by the anti-transpose $A^{a\intercal}$ of the coboundary matrix $A$. Assume that $R'=A^{a\intercal} V'$
is the reduced matrix of $A^{a\intercal}$ obtained from left-to-right column operations. Let $\mathrm{Pivots}(R ')$ be the set of pivots of $R '$. Using all $i$-th rows for each $i\in \mathrm{Pivots}(R' )$, we do bottom-to-top row reduction on $R$: $U'  R' =\Lambda'$, such that $\Lambda' $ has at most one non-zero element in each row and column, and $U' $ is an upper triangular matrix. Given a $p$-cochain $\sigma=S^*\cdot y$ for some vector $y\in \Z_2^{m}$. Similarly to Prop.~\ref{prop:is_coboundary}, we show that $\sigma$ is a coboundary if and only if $\{i:((V')^{a\intercal} y)(i)\neq 0\}\subset\{j:\text{the j-th column of }R'\neq 0\}.$ Because $A =((V ')^{-1})^{a\intercal}(\Lambda' )^{a\intercal} ((U ')^{-1})^{a\intercal}$,
\begin{align*}
    \exists \text{ a $(p-1)$-cochain } \tau \text{ s.t. }\sigma=\delta^p \tau 
    &\iff \exists \bar{\xi}\in \Z_2^{m} \text{ s.t. }(V') ^{a\intercal} y=(\Lambda') ^{a\intercal}\bar{\xi}\\
    &\iff \{i:(V^{a\intercal}  y)(i)\neq 0\}\subset
    \mathrm{Pivots}((\Lambda') ^{a\intercal})\\
    %&\iff \{i:(V^{a\intercal}  y)(i)\neq 0\}\subset\{j:\text{the j-th column of }\Lambda \neq 0\}\\
    &\iff \{i:(V^{a\intercal}  y)(i)\neq 0\}\subset
    \{j:\text{the j-th column of }R' \neq 0\}.
\end{align*}

\complexity*
\begin{proof}[Proof of Thm.~\ref{thm:comp-alg3}]
In Alg.~\ref{alg:main}, \textbf{line 9} runs no more than $q_1 \cdot q_{k-1} $ times, due to the definition of $q_1$ and $q_{k-1} $. 
The while loop in \textbf{line 14} runs no more than $\card(\operatorname{b\_time})\leq q_1$ times, and the condition of this while loop involves a matrix multiplication whose complexity is at most $O((m_k)^2)$.
Combined with other comments in Alg.~\ref{alg:main} and the fact that $k $ is a fixed constant,
the total complexity is upper bounded by 
\[O(k )\cdot O( q_1 \cdot q_{k-1} )\cdot O( (m_k )^2\cdot \max\{c_k ,q_1\}) \leq O( (m_k )^2\cdot q_1 \cdot q_{k-1}  \cdot \max\{c_k ,q_1\}).\]

Next, we estimate $q_{k-1} $ and $c_k $ using $q_1$, $m_k$ and $k $. Since each $B_\ell$ consists of $\ell$-fold $*_{\bsigma}$-products of elements in $B_1$, we have $ q_{k-1} = \max_{1\leq \ell\leq k-1}\card(B_\ell)\leq  (q_1 ) ^{k  -1},$ which turns out to be a very coarse bound (see %\cite[Rmk.~45]{contessoto2021persistent}).
Rmk.~\ref{rmk:torus_complexity}). 
On the other hand, $c_k $ as the cost of checking whether a simplex is alive at a given parameter, is at most $m_k $ the number of simplices. Hence, the complexity of Alg.~\ref{alg:main} is upper bounded by $O( (m_k  )^{3}\cdot q_1^k)$. In addition, we have $q_1\leq m_k$, because in the matrix reduction algorithm for computing barcodes, bars are obtained from the pivots of the column reduced coboundary matrix and each column provides at most one pivot. Thus,
$O( (m_k )^2\cdot q_1 \cdot q_{k-1}  \cdot \max\{c_k ,q_1\})\leq O( (m_k  )^{3}\cdot q_1^k)\leq O( (m_k  )^{k +3}).$

Consider the Vietoris-Rips filtration arising from a metric space of $n$ points with the distance matrix $D$. Then \textbf{line 7} of Alg.~\ref{alg:cup_product}, checking whether a simplex $a$ 
(represented by a set of at most $\udim+1$ indices into $[n]$)
is alive at the filtration parameter value $t$, can be done by checking whether $\max( D[a,a])\leq t$, with the constant time complexity $c_\udim=O(\udim^2)$. 
\end{proof}

\subparagraph*{Estimating the parameter $q_{k-1}$ in Alg.~\ref{alg:main}} The inequality $q_{k-1}\leq (m_\udim)^\udim$ is quite coarse in general. For the equality to take place, there need to be $m_k$ many intervals in the barcode, where all $\ell$-fold $*_{\bsigma}$-products of these intervals are non-zero.
Consider a filtration consisting of contractible spaces, where $q_{k-1}$ is always $0$ but $m_k$ can be arbitrarily large. Even in the case when there is a reasonable number of cohomology classes with non-trivial cup products, $q_{k-1}$ can be much smaller than $(m_k)^{k-1}$. See the remark below.

\begin{remark}[$q_{k-1}$ can be much smaller than $(m_k)^{k-1}$] \label{rmk:torus_complexity}
Let $\T^d$ be a $d$-torus with $d>4$, equipped with a cell complex structure such that it has $\binom{ d }{i}$ $i$-dimensional cells for each $i=0,\cdots,d$. The cohomology ring of the $d$-torus has $d$ linearly independent $1$-cocycles, which we denote by $\eta_1,\dots,\eta_d $. In addition, there is a ring isomorphism $\Hfunc^*(\T^d)\cong\Z_2[\eta_1,\dots,\eta_d]/\langle\eta_i^2=0\rangle$, 
see \cite[Ch. 3, Ex. 3.16]{hatcher2000}. Let $\X$ be any triangulation of the $4$-skeleton of $\T^d$. Notice that the number $m_3$ of simplices from dimension $1$ to $4$ in $\X$ is at least $\dim(\Hfunc^{\geq 1}(\X))=O(d^3)$.

For $k=3$, consider the constant filtration $\Xfunc$ of $\X$. 
The set $\bsigma:=
\{\eta_{i_1}\smile\dots\smile\eta_{i_j}\}_{1\leq i_1\leq \dots\leq i_j\leq 3}$ of cochains forms a family of representative cocycles for the filtration $\Hfunc^{\geq1}(\Xfunc)$. By definition, $q_3:=\max_{1\leq \ell\leq 3}\bar{q}_\ell$, where $\bar{q}_\ell$ is the cardinality of $B_\ell:=\left(\caB_{\geq 1}(\Xfunc)\right)^{*_{\bsigma}\ell}$ (see Defn.~\ref{dfn:ell-product}). We then see that
\begin{align*}
\bar{q}_1=\,&\card(B_1)= \binom{ d }{1}+\binom{ d }{2}+\binom{ d }{3}=O( d ^3)\\
\bar{q}_2=\,&\card(B_2)=  \binom{ d }{1}\binom{ d -1}{1}+2\binom{ d }{2}(d-2) =O( d ^3)
%\\\bar{q}_3=\,&\card(B_3)=  (d-2)\binom{ d }{2}=O( d ^3).
\end{align*}
Therefore, $q_2=O(d^3)\leq m_3<(m_3)^2$.  
\end{remark}

\section{Stability of persistent cup-length function}
\label{sec:stability}
%{Proof and details for \textsection \ref{sec:stability} stability of persistent cup-length}
\label{sec:Details of the stability theorem}
In \textsection \ref{sec:cont-pers-inv}, we show that the erosion distance $d_{\mathrm{E}}$ (see Defn.~\ref{def:de}) between persistent cup-length functions is stable under the homotopy interleaving $d_{\mathrm{HI}}$ (see Defn.~\ref{def:dhi}) between persistent spaces. Subsequently, for persistent spaces arising from Vietoris-Rips filtrations, we prove that the persistent cup-length function is stable under the Gromov-Hausdorff distance $d_{\mathrm{GH}}$ between metric spaces. We then apply the stability theorem below to compare the $2$-torus $\T^2$ and the wedge sum $\bbS^1\vee\bbS^2\vee\bbS^1$ in Ex.~\ref{ex:T2-wedge}.

Recall that $\Top$ denotes the category of compactly generated weakly Hausdorff topological spaces. 
In this section, we also view the persistent cohomology as a persistent graded algebra (over the base field $K$), in order to refer to both its persistent graded ring structure and its persistent graded vector space structure at the same time. Let $\Alg$ be the category of graded $K$-algebras, and write the (contravariant) cohomology algebra functor as $\Hfunc^*:\Top\to\Alg$.

\homostab*

To prove 
Thm.~\ref{thm:main-stability}, we recall the definitions of the interleaving distance and the homotopy-interleaving distance, together with certain results from persistence theory. 

\paragraph*{Interleaving distance and homotopy-interleaving distance}
\begin{definition}[\cite{bubenik2015metrics}]
\label{dfn:interleaving-dist}
Let $\CC$ be any category. 
Let $\Ffunc,\Gfunc:(\mathbb{R},\leq)\to \CC$ be a pair of persistent objects. $\Ffunc,\Gfunc$ are said to be $\varepsilon$-interleaved if there exists a pair of natural transformations $\varphi=(\varphi_t:\mathbf{F}_t\to \mathbf{G}_{t+\varepsilon})_{t\in\mathbb{R}}$ and $\psi=(\psi_t:\mathbf{G}_t\to \mathbf{F}_{t+\varepsilon})_{t\in\mathbb{R}}$, i.e. the diagrams 
\[
\xymatrix
{
\mathbf{F}_a\ar[dr]_{\varphi_a}\ar[rr]^{f_{a}^{b}} &   & \mathbf{F}_{b}\ar[dr]_{\varphi_b} &  & \mathbf{G}_a\ar[dr]_{\psi_a}\ar[rr]^{g_{a}^{b}} &   & \mathbf{G}_{b} \ar[dr]_{\psi_b}\\
	 & \mathbf{G}_{a+\varepsilon} \ar[rr]_{g_{a+\varepsilon}^{b+\e}} & &
	 \mathbf{G}_{b+\varepsilon}
	 & & \mathbf{F}_{a+\varepsilon} \ar[rr]_{f_{a+\varepsilon}^{b+\e}}
	 & &
	\mathbf{F}_{b+\varepsilon}
	 \\
}
\]
commute for all $a\leq b$ in $\R$; and such that the diagrams
\[
\xymatrix
{
\mathbf{F}_t\ar[dr]_{\varphi_t}\ar[rr]^{f_{t}^{t+2\varepsilon}}  &  & \mathbf{F}_{t+2\varepsilon} & & & \mathbf{G}_t\ar[dr]_{\psi_t}\ar[rr]^{g_{t}^{t+2\varepsilon}}  &  & \mathbf{G}_{t+2\varepsilon} \\
	 & \mathbf{G}_{t+\varepsilon} \ar[ur]_{\psi_{t+\varepsilon}} & & & & & \mathbf{F}_{t+\varepsilon} \ar[ur]_{\varphi_{t+\varepsilon}}\\
}
\]
commute for all $t\in\mathbb{R}$.
The interleaving distance between $\mathbf{F}$ and $\mathbf{G}$ is
\[d_{\mathrm{I}}^{\CC}(\mathbf{F},\mathbf{G}):= \inf\lbrace \varepsilon\geq 0\mid\text{ there is an }\e\text{-interleaving between }\mathbf{F}\text{ and }\mathbf{G}\rbrace.\]
\end{definition}

\begin{theorem}[Bubenik et al.~\cite{bubenik2015metrics}]
\label{thm:Bubenik}
For functors $\Ffunc,\Gfunc:(\R,\leq)\to\CC$ and $\Hfunc:\CC\to \DD$,  
\[ d_{\mathrm{I}}^{\DD}(\Hfunc\Ffunc,\Hfunc\Gfunc)\leq d_{\mathrm{I}}^{\CC}(\Ffunc,\Gfunc).\]
\end{theorem}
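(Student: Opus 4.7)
The plan is to show that any $\varepsilon$-interleaving between $\Ffunc$ and $\Gfunc$ in $\CC$ is carried by $\Hfunc$ to an $\varepsilon$-interleaving between $\Hfunc\Ffunc$ and $\Hfunc\Gfunc$ in $\DD$; the inequality between interleaving distances will then follow by taking infima.

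Concretely, I would fix any $\varepsilon > d_{\mathrm{I}}^{\CC}(\Ffunc,\Gfunc)$ and choose an $\varepsilon$-interleaving $(\varphi,\psi)$ between $\Ffunc$ and $\Gfunc$ as in Defn.~\ref{dfn:interleaving-dist}. From this data I define candidate natural transformations
\[ \varphi':=\bigl(\Hfunc(\varphi_t):\Hfunc\Ffunc_t\to \Hfunc\Gfunc_{t+\varepsilon}\bigr)_{t\in\R}, \qquad \psi':=\bigl(\Hfunc(\psi_t):\Hfunc\Gfunc_t\to \Hfunc\Ffunc_{t+\varepsilon}\bigr)_{t\in\R}. \]
To confirm that $(\varphi',\psi')$ is an $\varepsilon$-interleaving I would verify the two naturality squares and the two triangle identities required by Defn.~\ref{dfn:interleaving-dist}. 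Each of these is obtained by applying $\Hfunc$ to the corresponding commuting diagram in $\CC$ produced by $(\varphi,\psi)$; since $\Hfunc$ preserves composition and identities, it sends commuting diagrams in $\CC$ to commuting diagrams in $\DD$, and in particular $\Hfunc(f_a^b)$ and $\Hfunc(g_a^b)$ are exactly the transition maps of $\Hfunc\Ffunc$ and $\Hfunc\Gfunc$, so all four conditions transfer automatically.

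Having produced an $\varepsilon$-interleaving in $\DD$, the definition of interleaving distance gives $d_{\mathrm{I}}^{\DD}(\Hfunc\Ffunc,\Hfunc\Gfunc)\leq \varepsilon$. Since this holds for every $\varepsilon > d_{\mathrm{I}}^{\CC}(\Ffunc,\Gfunc)$, passing to the infimum yields the desired bound. There is no genuine obstacle in this argument; the proof is essentially the observation that functors carry interleavings to interleavings. The only minor care to take is that an infimum-realising interleaving need not exist, which is handled in the standard way by letting $\varepsilon$ decrease to $d_{\mathrm{I}}^{\CC}(\Ffunc,\Gfunc)$ and using that the inequality is preserved in the limit.
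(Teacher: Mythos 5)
Your proof is correct, and it is the standard argument: a functor sends commuting diagrams to commuting diagrams, hence carries any $\varepsilon$-interleaving $(\varphi,\psi)$ in $\CC$ to the $\varepsilon$-interleaving $(\Hfunc\varphi,\Hfunc\psi)$ in $\DD$, and the inequality follows by taking the infimum over $\varepsilon$. The paper itself gives no proof of this statement --- it is quoted from Bubenik et al.~\cite{bubenik2015metrics} --- and your argument is essentially the one found there, including the correct handling of the infimum (the bound holds for every $\varepsilon$ strictly above $d_{\mathrm{I}}^{\CC}(\Ffunc,\Gfunc)$, so it passes to the limit even if no optimal interleaving exists).
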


From Thm.~\ref{thm:Bubenik}, we obtain the stability of persistent cohomology and truncation functors under the interleaving distance between persistent spaces, summarized as the corollary below.
\begin{corollary}
%Let $\Hfunc^*:\Top\to\Alg $ be the cohomology algebra functor. 
For any $\Xfunc,\Yfunc:(\R,\leq)\to\Top$,  
\[d_{\mathrm{I}}^{\Ring }(\Hfunc^* \Xfunc,\Hfunc^* \Yfunc)\leq d_{\mathrm{I}}^{\Alg }(\Hfunc^* \Xfunc,\Hfunc^* \Yfunc)\leq d_{\mathrm{I}}^{\Top}(\Xfunc,\Yfunc).\]
Let $(\cdot)^k$ be the $k$-truncation functor on the category of simplicial complexes (viewed as a subcategory of $\Top$) such that $(\cdot)^k(\X)$ is the $\udim$-skeleton of a simplicial complex $\X$, and given a map $f:\X\to \Y,$ $(\cdot)^k(f):=f|_{\X^{\udim}}$. If $\Xfunc,\Yfunc$ are filtrations of simplicial complexes, then 
\[d_{\mathrm{I}}^{\Ring }\left(\Hfunc^* \big(\Xfunc^k\big),\Hfunc^* \big(\Yfunc^k\big)\right)
\leq d_{\mathrm{I}}^{\Alg }\left(\Hfunc^* \big(\Xfunc^k\big),\Hfunc^* \big(\Yfunc^k\big)\right)
\leq d_{\mathrm{I}}^{\Top}\left(\Xfunc^k,\Yfunc^k\right)
%=d_{\mathrm{I}}^{\Top}\left((\cdot)^k\circ\Xfunc,(\cdot)^k\circ\Yfunc\right)
\leq d_{\mathrm{I}}^{\Top}(\Xfunc,\Yfunc).\]
\end{corollary}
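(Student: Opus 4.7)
The plan is to derive each inequality in both chains as an instance of Bubenik's functoriality result (Thm.~\ref{thm:Bubenik}): for any functor $H:\CC\to\DD$ and persistent objects $\Ffunc,\Gfunc:(\R,\leq)\to\CC$, one has $d_{\mathrm{I}}^{\DD}(H\Ffunc,H\Gfunc)\leq d_{\mathrm{I}}^{\CC}(\Ffunc,\Gfunc)$. For each inequality I will identify a functor whose postcomposition with the persistent objects yields the left-hand side from the right-hand side, after which the bound is immediate. No ad hoc construction of interleavings is needed.

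For the first chain: the bound $d_{\mathrm{I}}^{\Ring}(\Hfunc^*\Xfunc,\Hfunc^*\Yfunc)\leq d_{\mathrm{I}}^{\Alg}(\Hfunc^*\Xfunc,\Hfunc^*\Yfunc)$ follows by applying Thm.~\ref{thm:Bubenik} to the forgetful functor $U:\Alg\to\Ring$ that keeps only the underlying graded ring of a graded $K$-algebra. Every graded $K$-algebra morphism is in particular a graded ring morphism, so $U$ is a well-defined functor, and $U\circ\Hfunc^*\Xfunc$ coincides on objects and morphisms with $\Hfunc^*\Xfunc$ viewed as a persistent graded ring. For $d_{\mathrm{I}}^{\Alg}(\Hfunc^*\Xfunc,\Hfunc^*\Yfunc)\leq d_{\mathrm{I}}^{\Top}(\Xfunc,\Yfunc)$, apply Thm.~\ref{thm:Bubenik} to the cohomology algebra functor $\Hfunc^*:\Top\to\Alg$: applying $\Hfunc^*$ to the natural transformations and commuting triangles of an $\varepsilon$-interleaving in $\Top$ produces the data of an $\varepsilon$-interleaving between the contravariant persistent algebras $\Hfunc^*\Xfunc$ and $\Hfunc^*\Yfunc$.

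For the second chain, the first two inequalities are obtained by rerunning the argument above with $\Xfunc,\Yfunc$ replaced throughout by $\Xfunc^k,\Yfunc^k$, each of which is again a persistent topological space (each $k$-skeleton being a simplicial complex and hence living inside $\Top$). The remaining inequality $d_{\mathrm{I}}^{\Top}(\Xfunc^k,\Yfunc^k)\leq d_{\mathrm{I}}^{\Top}(\Xfunc,\Yfunc)$ is again Thm.~\ref{thm:Bubenik}, this time applied to the $k$-truncation functor $(\cdot)^k$, which is functorial on the category of simplicial complexes because a simplicial map sends each simplex to a simplex of no greater dimension, and therefore restricts to a map $\X^k\to\Y^k$; functoriality (identities, composition) is inherited directly from restriction of maps. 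Since $\Xfunc,\Yfunc$ are filtrations of simplicial complexes with simplicial (indeed, inclusion) transition maps, Thm.~\ref{thm:Bubenik} applies verbatim.

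The only genuine subtlety — and hence the main bookkeeping obstacle — is the contravariance of $\Hfunc^*$ when invoking Thm.~\ref{thm:Bubenik} in the second inequality of each chain. Concretely, one must observe that the triangles defining an $\varepsilon$-interleaving in $\Top$ remain commutative after applying $\Hfunc^*$ but with all arrows reversed, and that this reversed data matches the definition of an $\varepsilon$-interleaving for the contravariant persistent graded algebras adopted in \S\ref{sec:cup_l_func}; equivalently, the interleaving distance is invariant under passage to opposite categories, so Thm.~\ref{thm:Bubenik} extends to contravariant functors without modification. Once this point is pinned down, each of the six inequalities reduces to a single application of Thm.~\ref{thm:Bubenik}.
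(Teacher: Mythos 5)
Your proposal is correct and matches the paper's own justification: the paper derives this corollary directly from Thm.~\ref{thm:Bubenik} by post-composing with the forgetful functor $\Alg\to\Ring$, the cohomology algebra functor $\Hfunc^*$, and the $k$-truncation functor, exactly as you do. Your explicit handling of the contravariance of $\Hfunc^*$ is a point the paper leaves implicit, but it does not change the argument.
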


Following the terminology in \cite{blumberg2017universality}, a pair of persistent spaces $\Xfunc,\Yfunc:(\R,\leq)\to\Top$ are called \textbf{weakly equivalent},
denoted by $\Xfunc\simeq\Yfunc$, if there exists a persistent space $\Zfunc:(\R,\leq)\to\Top$ and a pair of natural transformations
$\varphi:\Zfunc\Rightarrow\Xfunc$ and $\psi:\Zfunc\Rightarrow\Yfunc$
% $\varphi=(\varphi_t: \Zfunc_t \to\X_t)_{t\in\R}$ and $\psi=(\psi_t:\Zfunc_t\to\Yfunc_t)_{t\in\R}$ 
such that for each $t\in\R$, the maps $\varphi_t:\Zfunc_t\to\Xfunc_t$ and $\psi_t:\Zfunc_t\to \Yfunc_t$ are weak homotopy equivalences, i.e., they induce isomorphisms on all homotopy groups. 

\begin{remark}\label{rmk:w.h.e.-cohomolgoy}
Weak homotopy equivalence preserves cohomology algebras. Indeed, let $h:\Y\to \X$ be a weak homotopy equivalence. By \cite[Prop.~4.21]{hatcher2000}, the map $h$ induces a graded linear isomorphism $\Hfunc^*(h):\Hfunc^*(\X)\to\Hfunc^*(\Y)$. On the other hand, the induced map $\Hfunc^*(h)$ preserves the cup product operation. Thus, $\Hfunc^*(h)$ is a graded algebra isomorphism.
\end{remark}

\begin{definition}[The homotopy interleaving distance, \cite{blumberg2017universality}]
\label{def:dhi}
Let $\Xfunc,\Yfunc:(\mathbb{R},\leq)\to \mathbf{Top}$ be a pair of persistent spaces.
The \textit{homotopy interleaving distance of $\Xfunc,\Yfunc$} is
\[d_{\mathrm{HI}}(\Xfunc,\Yfunc)=\inf\left\{d_{\mathrm{I}}^{\Top}(\Xfunc',\Yfunc')\hspace{0.5em}\mid\Xfunc'\simeq \Xfunc\text{ and }\Yfunc'\simeq \Yfunc\right\}.\]
\end{definition}

\subsection{Stability of persistent cohomology rings and persistent invariants}\label{sec:cont-pers-inv}

It follows from \cite[Rmk.~105]{ginot2019multiplicative} that the interleaving distance between persistent cohomology rings is stable under the homotopy-interleaving distance between filtrations, where the proof makes use of the natural $E_{\infty}$-algebra structure (an algebraic structure where the multiplication is associative and commutative up to all higher homotopies) on the cochain complexes. We give a direct proof of Thm.~\ref{thm:cohomology stability} which takes place at the more basic level of the persistent cohomology ring (or algebra).

\begin{theorem}
\label{thm:cohomology stability}
For any pair of persistent spaces $\Xfunc,\Yfunc:(\mathbb{R},\leq)\to \mathbf{Top}$, we have that 
\[d_{\mathrm{I}}^{\Ring }(\Hfunc^* \Xfunc,\Hfunc^* \Yfunc)\leq d_{\mathrm{I}}^{\Alg }(\Hfunc^* \Xfunc,\Hfunc^* \Yfunc)\leq d_{\mathrm{HI}}(\Xfunc,\Yfunc).\]
\end{theorem}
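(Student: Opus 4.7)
}

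The plan is to handle the two inequalities separately. The first inequality $d_{\mathrm{I}}^{\Ring}(\Hfunc^*\Xfunc,\Hfunc^*\Yfunc)\leq d_{\mathrm{I}}^{\Alg}(\Hfunc^*\Xfunc,\Hfunc^*\Yfunc)$ is essentially bookkeeping: the forgetful functor $U:\Alg\to\Ring$ sends every $\varepsilon$-interleaving in $\Alg$ to an $\varepsilon$-interleaving in $\Ring$ (algebra morphisms are in particular ring morphisms, and the interleaving squares/triangles commute in $\Ring$ because they commute in $\Alg$). Equivalently, this is a direct application of Thm.~\ref{thm:Bubenik} to $U$ composed with the two persistent algebras.

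For the second inequality, I would argue by taking an approximating pair. Fix $\varepsilon>d_{\mathrm{HI}}(\Xfunc,\Yfunc)$ and, by Defn.~\ref{def:dhi}, choose persistent spaces $\Xfunc'\simeq \Xfunc$ and $\Yfunc'\simeq \Yfunc$ with $d_{\mathrm{I}}^{\Top}(\Xfunc',\Yfunc')<\varepsilon$. Applying Thm.~\ref{thm:Bubenik} to the (contravariant) cohomology algebra functor $\Hfunc^*:\Top\to\Alg$ yields
\[
d_{\mathrm{I}}^{\Alg}(\Hfunc^*\Xfunc',\Hfunc^*\Yfunc')\;\leq\; d_{\mathrm{I}}^{\Top}(\Xfunc',\Yfunc')\;<\;\varepsilon.
\]
(Contravariance just reverses the direction of the persistent algebras but does not affect the definition of interleaving, as this depends only on commuting squares and triangles of morphisms.)

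The bridge from $\Xfunc',\Yfunc'$ back to $\Xfunc,\Yfunc$ is provided by Rmk.~\ref{rmk:w.h.e.-cohomolgoy}. By definition of weak equivalence, there exists a persistent space $\Zfunc$ together with natural transformations $\varphi:\Zfunc\Rightarrow\Xfunc$ and $\varphi':\Zfunc\Rightarrow\Xfunc'$ whose components are weak homotopy equivalences. Applying $\Hfunc^*$ produces a zig-zag of natural transformations $\Hfunc^*\Xfunc\Rightarrow\Hfunc^*\Zfunc\Leftarrow\Hfunc^*\Xfunc'$ whose components are \emph{isomorphisms} of graded algebras (Rmk.~\ref{rmk:w.h.e.-cohomolgoy}), hence a natural isomorphism of persistent graded algebras $\Hfunc^*\Xfunc\cong\Hfunc^*\Xfunc'$; analogously $\Hfunc^*\Yfunc\cong\Hfunc^*\Yfunc'$. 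Since the interleaving distance is invariant under isomorphism of persistent objects (isomorphic persistent objects are $0$-interleaved, and interleavings compose), we conclude
\[
d_{\mathrm{I}}^{\Alg}(\Hfunc^*\Xfunc,\Hfunc^*\Yfunc)\;=\;d_{\mathrm{I}}^{\Alg}(\Hfunc^*\Xfunc',\Hfunc^*\Yfunc')\;<\;\varepsilon,
\]
and letting $\varepsilon\searrow d_{\mathrm{HI}}(\Xfunc,\Yfunc)$ yields the claim.

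The main obstacle I anticipate is the careful bookkeeping in the last paragraph: one must verify that composing the algebra-level interleaving between $\Hfunc^*\Xfunc'$ and $\Hfunc^*\Yfunc'$ with the two natural isomorphisms $\Hfunc^*\Xfunc\cong\Hfunc^*\Xfunc'$ and $\Hfunc^*\Yfunc\cong\Hfunc^*\Yfunc'$ genuinely produces an $\varepsilon$-interleaving of $\Hfunc^*\Xfunc$ with $\Hfunc^*\Yfunc$ (all triangles and squares in Defn.~\ref{dfn:interleaving-dist} still commute because composition is performed with identity shifts). Everything else is routine application of Thm.~\ref{thm:Bubenik} and the homotopy invariance of the cohomology algebra.
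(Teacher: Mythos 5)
Your proposal is correct and follows essentially the same route as the paper's proof: apply Thm.~\ref{thm:Bubenik} to the weakly equivalent approximants $\Xfunc',\Yfunc'$, then use the homotopy invariance of the cohomology algebra (Rmk.~\ref{rmk:w.h.e.-cohomolgoy}) to upgrade the zig-zag of weak equivalences to natural isomorphisms $\Hfunc^*\Xfunc\cong\Hfunc^*\Xfunc'$ and $\Hfunc^*\Yfunc\cong\Hfunc^*\Yfunc'$, so that the interleaving transfers; your $\varepsilon$-approximation at the start is just a reformulation of the paper's concluding infimum over all choices of $\Xfunc',\Yfunc'$.
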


\begin{proof}%[Proof of Thm.~\ref{thm:cohomology stability}]
%Recall that any topological space $\X$ is weakly equivalent to a CW complex $\X''$, via some map $h:\X'\to \X$. The map $h$ is called a \emph{CW-approximation}, and it is known that there is a construction of CW-approximations that are functorial up to homotopy, see http://www-math.mit.edu/~psh/notes/cwapproximation.pdf (add reference). Thus, the persistent spaces $\Xfunc$ and $\Yfunc$ are weakly equivalent to persistent CW complexes $\Xfunc''$ and $\Yfunc''$, respectively.
Let $\Xfunc',\Yfunc':(\R,\leq)\to\Top$ be any pair of persistent spaces such that $\Xfunc'\simeq \Xfunc$ and  $\Yfunc'\simeq \Yfunc$.
By Thm.~\ref{thm:Bubenik} we have that
\[d_{\mathrm{I}}^{\Alg }(\Hfunc^* \Xfunc',\Hfunc^* \Yfunc')\leq d_{\mathrm{I}}^{\Top}(\Xfunc',\Yfunc').\]
Since $\Xfunc'\simeq \Xfunc$ and  $\Yfunc'\simeq \Yfunc$, there exists persistent spaces $\Zfunc,\Wfunc:(\R,\leq)\to\Top$ and natural transformations $\Xfunc\xLeftarrow{\varphi}\Zfunc\xRightarrow{\psi}\Xfunc'$ and $\Yfunc\xLeftarrow{f}\Wfunc\xRightarrow{g}\Yfunc'$, such that for each $t\in\R$, the maps $\varphi_t:\Zfunc_t\to\Xfunc_t$, $\psi_t:\Zfunc_t\to\Xfunc'_t$, $f_t:\Wfunc_t\to\Yfunc_t$, and $g_t:\Wfunc_t\to\Yfunc'_t$, are weak homotopy equivalences.
Because the cohomology algebra functor $\Hfunc^* $ is invariant of weak homotopy equivalence of spaces (see Rmk.~\ref{rmk:w.h.e.-cohomolgoy}), then for each $t\in\R$, the graded algebra homomorphisms $\Hfunc^*(\varphi_t)$, $\Hfunc^*(\psi_t)$, $\Hfunc^*(f_t)$, and $\Hfunc^*(g_t)$ are actually graded algebra isomorphisms.
In particular, by the functoriality of $\Hfunc^*$, $\Hfunc^*(\varphi):\Hfunc^*\Zfunc\Rightarrow\Hfunc^*(\Xfunc)$, $\Hfunc^*(\psi):\Hfunc^*\Zfunc\Rightarrow\Hfunc^*(\Xfunc)'$, $\Hfunc^*(f):\Hfunc^*\Wfunc\Rightarrow\Hfunc^*(\Yfunc)$, and $\Hfunc^*(g):\Hfunc^*\Wfunc\Rightarrow\Hfunc^*(\Yfunc)'$ are natural isomorphisms of persistent graded algebras.
Thus, the graded algebra isomorphisms $\Hfunc^*(\varphi_t)\circ\Hfunc^*(\psi_t)^{-1}:\Hfunc^*(\Xfunc)'_t\to\Hfunc^*(\X)_t$ and $\Hfunc^*(f_t)\circ\Hfunc^*(g_t)^{-1}:\Hfunc^*(\Yfunc)'_t\to\Hfunc^*(\Yfunc)_t$
now yield natural isomorphisms $\Hfunc^*(\Xfunc)'\cong\Hfunc^*(\Xfunc)$ and $\Hfunc^*(\Yfunc)'\cong\Hfunc^*(\Yfunc)$ of persistent graded algebras.
Hence, 
\[d_{\mathrm{I}}^{\Alg }(\Hfunc^* \Xfunc,\Hfunc^* \Yfunc)=d_{\mathrm{I}}^{\Alg }(\Hfunc^* \Xfunc',\Hfunc^* \Yfunc')\leq d_{\mathrm{I}}^{\Top}(\Xfunc',\Yfunc').\]
Since $\Xfunc',\Yfunc'$ were arbitrary, we obtain
\[d_{\mathrm{I}}^{\Alg }(\Hfunc^*(\Xfunc),\Hfunc^*(\Yfunc))\leq \inf\left\{d_{\mathrm{I}}^{\Top}(\Xfunc',\Yfunc')\hspace{0.5em}\mid\Xfunc'\simeq \Xfunc\text{ and }\Yfunc'\simeq \Yfunc\right\}= d_{\mathrm{HI}}(\Xfunc,\Yfunc).\]
\end{proof}

%\subsubsection{Stability of persistent invariants}
Next, we recall a notion of distance for comparing a pair of length functions.
\begin{definition}[Erosion distance \cite{patel2018generalized}]
\label{def:de}
Let $\Jfunc_1,\Jfunc_2: \mathbf{Int}\to (\mathbb{N},\geq)$ be two functors. 
$\Jfunc_1,\Jfunc_2$ are said to be \textit{$\e$-eroded} if $\Jfunc_1([a,b])\geq \Jfunc_2([a-\e,b+\e])$ and $\Jfunc_2([a,b])\geq \Jfunc_1([a-\e,b+\e])$, for all $[a,b]\in\Int$.
The \textit{erosion distance of $\Jfunc_1,\Jfunc_2$} is 
\[d_{\mathrm{E}}(\Jfunc_1,\Jfunc_2):=\inf \lbrace \e\geq0
\mid \Jfunc_1,\Jfunc_2\text{ are }\e\text{-eroded}\},
\]
with the convention that $d_{\mathrm E}(\Jfunc_1,\Jfunc_2)=\infty$ if an $\varepsilon$ satisfying the condition above does not exists.
\end{definition}

\begin{theorem}
\label{thm:stability}
Let $\CC$ be a category with images and  $J:\ob(\CC)\to\N$ an inj-surj invariant in $\CC$.
Let $\mathbf{F},\mathbf{G}:(\mathbb{R},\leq)\to \CC$ and let $J(\Ffunc),J(\Gfunc):\Int\to(\N,\geq)$ be their associated persistent invariants.
Then:
\[d_\mathrm{E}(J(\mathbf{F}),J(\mathbf{G}))\leq d_{\mathrm{I}}^{\CC}(\mathbf{F},\mathbf{G}).\]
\end{theorem}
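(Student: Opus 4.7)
The plan is to unfold both sides of the desired inequality and reduce to a factorization argument. Fix $\varepsilon > d_{\mathrm{I}}^{\CC}(\mathbf{F},\mathbf{G})$, so that there exists an $\varepsilon$-interleaving $(\varphi,\psi)$ between $\Ffunc$ and $\Gfunc$ in the sense of Defn.~\ref{dfn:interleaving-dist}. I will show that $J(\Ffunc)$ and $J(\Gfunc)$ are $\varepsilon$-eroded; taking infimum over such $\varepsilon$ then yields $d_{\mathrm{E}}(J(\Ffunc),J(\Gfunc)) \leq d_{\mathrm{I}}^{\CC}(\mathbf{F},\mathbf{G})$.

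The core step is a factorization. Fix $[a,b]\in\Int$. Using naturality of $\varphi$ applied to $a\leq b$, and the triangular identity $\varphi_a \circ \psi_{a-\varepsilon} = g_{a-\varepsilon}^{a+\varepsilon}$ at parameter $t = a-\varepsilon$, I compute
\[
g_{a-\varepsilon}^{b+\varepsilon} \;=\; g_{a+\varepsilon}^{b+\varepsilon}\circ g_{a-\varepsilon}^{a+\varepsilon} \;=\; g_{a+\varepsilon}^{b+\varepsilon}\circ \varphi_a \circ \psi_{a-\varepsilon} \;=\; \varphi_b \circ f_a^b \circ \psi_{a-\varepsilon},
\]
so the transition morphism $g_{a-\varepsilon}^{b+\varepsilon}:\Gfunc_{a-\varepsilon}\to\Gfunc_{b+\varepsilon}$ factors through $f_a^b$. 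Applying Lem.~\ref{lem:gf} twice (to the two compositions $\varphi_b\circ(f_a^b\circ\psi_{a-\varepsilon})$ and $f_a^b\circ\psi_{a-\varepsilon}$) gives
\[
J(\Gfunc)([a-\varepsilon,b+\varepsilon]) = J(g_{a-\varepsilon}^{b+\varepsilon}) \leq J(f_a^b) = J(\Ffunc)([a,b]).
\]
By the symmetric argument with the roles of $\Ffunc$ and $\Gfunc$ swapped, using the other triangular identity $\psi_{t+\varepsilon}\circ\varphi_t = f_t^{t+2\varepsilon}$ and naturality of $\psi$, I obtain $J(\Ffunc)([a-\varepsilon,b+\varepsilon]) \leq J(\Gfunc)([a,b])$. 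Hence $J(\Ffunc)$ and $J(\Gfunc)$ are $\varepsilon$-eroded.

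There is no serious obstacle here: everything follows from the interleaving identities plus Lem.~\ref{lem:gf}. The only thing to be careful about is bookkeeping with the indices in the triangular identities and confirming that the factorization passes through the correct image object so that Lem.~\ref{lem:gf} applies directly. Since the conclusion $J(g_{a-\varepsilon}^{b+\varepsilon})\leq J(f_a^b)$ holds for every $\varepsilon > d_{\mathrm{I}}^{\CC}(\Ffunc,\Gfunc)$, passing to the infimum in the definition of $d_{\mathrm{E}}$ completes the argument.
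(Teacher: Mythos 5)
Your proof is correct and follows essentially the same route as the paper's: both factor the transition morphism $g_{a-\varepsilon}^{b+\varepsilon}$ through $f_a^b$ via the interleaving and triangle identities and then apply Lem.~\ref{lem:gf} twice; the only (immaterial) difference is that you split at the left endpoint and invoke naturality of $\varphi$, while the paper splits at $b-\varepsilon$ and invokes naturality of $\psi$. The bookkeeping in your factorization $g_{a-\varepsilon}^{b+\varepsilon}=\varphi_b\circ f_a^b\circ\psi_{a-\varepsilon}$ checks out, and passing to the infimum over $\varepsilon$ is handled correctly.
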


\begin{proof}%[Proof of Thm.~\ref{thm:stability}] 
Denote by $f_a^b:\mathbf{F}_a\to \mathbf{F}_b$ and $g_a^b:\mathbf{G}_a\to \mathbf{G}_b$, $a\leq b$, the associated morphisms from $\Ffunc$ to $\Gfunc$.
Assume that $\mathbf{F},\mathbf{G}$ are $\e$-interleaved.
Then, there exist two $\R$-indexed families of morphisms $\varphi_t:\mathbf{F}_t\to \mathbf{G}_{t+\e}$ and $\psi_t:\mathbf{G}_t\to  \mathbf{F}_{t+\e}$, which are natural for all $t\in\R$, such that $\psi_{t+\e}\circ \varphi_t=f_t^{t+2\e}$ and $\varphi_{t+\e}\circ \psi_t=g_t^{t+2\e}$, for all $t\in\R$.
Let $[a,b]\in\Int$. We claim that $J(g_{a-\e}^{b+\e})\leq J(f_a^b)$. If we show this, then similarly we can show the symmetric inequality, and therefore obtain that $J(\mathbf{F}),J(\mathbf{G})$ are $\e$-eroded.
Indeed, the claim is true because
\begin{align*}
    J(g_{a-\e}^{b+\e})&=J(g_{b-\e}^{b+\e}\circ g_{a-\e}^{b-\e})\\
    &=J(\varphi_{b}\circ \psi_{b-\e}\circ g_{a-\e}^{b-\e})\\
    &\leq J(\psi_{b-\e}\circ g_{a-\e}^{b-\e})\hspace{1em}(\text{by  Lem.~\ref{lem:gf}})\\
    &=J(f_{a}^{b}\circ \psi_{a-\e} )\hspace{1em}(\text{by naturality of }\psi)\\
    &\leq J(f_{a}^{b}) \hspace{1em}(\text{by  Lem.~\ref{lem:gf}}).
\end{align*}
\end{proof}

We now prove Thm.~\ref{thm:main-stability}:
\begin{proof}[Proof of Thm.~\ref{thm:main-stability}]
\label{pf:main-stability}
Let $\Xfunc,\Yfunc:(\R,\leq)\to\Top$ be two persistent spaces. Recall from Prop.~\ref{prop:injective-surjective-length} that the length invariant $R\mapsto \len(R)$ is an injective-surjective-invariant of persistent graded rings.
Then, Eqn.~(\ref{eq:dE-dHI}) follows from
\[ d_{\mathrm{E}}(\cupprod(\Xfunc),\cupprod(\mathbf{Y}))=d_\mathrm{E}(\len(\Hfunc^* (\Xfunc)),\len(\Hfunc^* (\Yfunc))\leq d_{\mathrm{I}}^{\Ring }(\Hfunc^* (\Xfunc),\Hfunc^* (\Yfunc))\leq d_{\mathrm{HI}}(\Xfunc,\Yfunc),\]
where the two inequalities follow from Thm.~\ref{thm:cohomology stability} and Thm.~\ref{thm:stability}, respectively.

Given two compact metric spaces $X$ and $Y$, Eqn.~(\ref{eq:dE-dGH}) follows from Eqn.~(\ref{eq:dE-dHI}) and \cite[Thm.~1.6]{blumberg2017universality}:
\[d_{\mathrm{HI}}(\mathbf{VR}(X),\mathbf{VR}(Y))\leq 2\cdot d_{\mathrm{GH}}(X,Y).\]
\end{proof}

\subsection{Estimating the Gromov-Hausdorff distance via  persistent cup-length functions}
\label{sec:erosion_T2_wedge}
We apply Eqn.~(\ref{eq:dE-dGH}) in Thm.~\ref{thm:main-stability} to estimate the Gromov-Hausdorff distance between $\T^2$ and $\bbS^1\vee \bbS^2\vee\bbS^1$, where the metrics on the two spaces are defined as follows. For the sake of simplicity and without loss of generality, we consider $\Int$ to be the set of closed intervals in this section. %See \cite[Ex.~6.8]{memoli2019persistent} for a similar calculation done in the setting of persistent homotopy groups.

\begin{example}[$\VR(\T^2)$ v.s. $\VR(\bbS^1\vee\bbS^2\vee\bbS^1)$]
\label{ex:T2-wedge}
Let the $2$-torus $\T^2=\bbS^1\times \bbS^1$ be the $\ell_\infty$-product of two unit geodesic circles.
Let $\bbS^2$ be the unit $2$-sphere, equipped with the geodesic distance, and denote by $\bbS^1\vee \bbS^2\vee \bbS^1$ the wedge sum equipped with the gluing metric. Using the characterization of Vietoris-Rips complex of $\bbS^1$ given by \cite{adamaszek2017vietoris}, we obtain the persistent cup-length function of $\VR(\bbS^1)$. Combined with Prop.~\ref{prop:prod-coprod-pers-cup}, we obtain the persistent cup-length function of $\VR(\T^2)$ (see Fig.~\ref{fig:torus_sphere_VR}): for any interval $[a,b]$,
\begin{align*}
    \cupprod\left(\VR(\T^2)\right)([a,b])=
      \begin{cases}
      2, &\mbox{ if $[a,b] \subset \left(\frac{l}{2l+1}2\pi ,\frac{l+1}{2l+3}2\pi \right)$ for some $ l=0,1,\dots$}\\
      0, &\mbox{ otherwise.}
      \end{cases}
\end{align*}
%See the top figure in Fig.~\ref{fig:torus_sphere_VR} for a visualization of $\cupprod\left(\VR(\T^2)\right)$.

For the persistent cup-length function of $\VR(\bbS^1\vee \bbS^2\vee \bbS^1)$, recall \cite[Prop.~3.7]{adamaszek2020homotopy}: the Vietoris-Rips complex of a metric gluing is the wedge sum of Vietoris-Rips complexes. Applying Prop.~\ref{prop:prod-coprod-pers-cup}, we have for any interval $[a,b]$,
\begin{align*}
    \cupprod\left(\VR(\bbS^1\vee \bbS^2\vee \bbS^1)\right)([a,b])
    % \\
    % =\,&\cupprod\left(\VR(\bbS^1)\vee \VR(\bbS^2)\vee \VR(\bbS^1))\right)([a,b]),\\
    % =\,&
    =\max \left\{ \cupprod(\VR(\bbS^1))([a,b]),\cupprod(\VR(\bbS^2))([a,b])\right\} 
\end{align*}
%The first equality follows from \cite[Prop.~3.7]{adamaszek2020homotopy}: the Vietoris-Rips complex of a metric gluing is the wedge sum of Vietoris-Rips complexes, and the second equality follows from Prop.~\ref{prop:prod-coprod-pers-cup}. 
We now compute $\cupprod(\VR(\bbS^2))$. %, we are interested in the homotopy types of the Vietoris-Rips complexes of $\bbS^2$. 
For any $r\geq \pi=\diam(\bbS^2)$, $\VR_r(\bbS^2)$ is contractible. For any $r\in (0,\zeta)$, where $\zeta:=\arccos(-\tfrac{1}3)\approx 0.61\pi$, it follows from \cite[Cor.~7.1]{lim2020vietoris} that $\VR_r(\bbS^2)$ is homotopy equivalent to $\bbS^2$. Thus, $\cupprod(\VR(\bbS^2))([a,b])=1,\forall [a,b]\subset(0,\zeta)$, implying
\[\cupprod(\VR(\bbS^1\vee \bbS^2\vee \bbS^1))([a,b])=1,\forall [a,b]\subset(0,\zeta).\]
Due to the lack of knowledge of the homotopy type of $\VR_r(\bbS^2)$ for $r$ close to $\pi$, we are not able to compute the complete function $\cupprod(\VR(\bbS^2))$, nor $\cupprod(\VR(\bbS^1\vee \bbS^2\vee \bbS^1))$. However, we are still able to evaluate the erosion distance of $\cupprod(\VR(\bbS^1\vee \bbS^2\vee \bbS^1))$ and $\cupprod(\VR(\T^2))$, as in Prop.~\ref{prop:erosion-comp}. See page~\pageref{pf:erosion-comp} in \S \ref{sec:Details of the stability theorem} for the proof of Prop.~\ref{prop:erosion-comp}.

\end{example}

\begin{restatable}{proposition}{torusvswedge} 
\label{prop:erosion-comp}
For the $2$-torus $\T^2$ and the wedge sum space $\bbS^1\vee \bbS^2\vee \bbS^1$, 
%given in Ex.\ref{ex:T2-wedge},
\[\frac{\pi}{3}=d_{\mathrm{E}}\left(\cupprod(\VR(\T^2)),\cupprod(\VR(\bbS^1\vee\bbS^2\vee\bbS^1))\right)\leq 2\cdot d_{\mathrm{GH}}\left(\T^2,\bbS^1\vee\bbS^2\vee\bbS^1\right).\]
\end{restatable}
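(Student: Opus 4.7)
The inequality $d_{\mathrm{E}}(\cdots)\leq 2\cdot d_{\mathrm{GH}}(\T^2,\bbS^1\vee\bbS^2\vee\bbS^1)$ is immediate from Eq.~(\ref{eq:dE-dGH}) of Thm.~\ref{thm:main-stability}, so my plan is to establish the equality $d_{\mathrm{E}}(\cupprod(\VR(\T^2)),\cupprod(\VR(\bbS^1\vee\bbS^2\vee\bbS^1)))=\pi/3$. Throughout, write $F:=\cupprod(\VR(\T^2))$ and $G:=\cupprod(\VR(\bbS^1\vee\bbS^2\vee\bbS^1))$. Two structural facts will drive the argument. First, by the formula recalled in Ex.~\ref{ex:T2-wedge}, $F$ takes values in $\{0,2\}$, with $F([a,b])=2$ iff $[a,b]$ is contained in some $I_l:=(\tfrac{2\pi l}{2l+1},\tfrac{2\pi(l+1)}{2l+3})$; among these, $I_0=(0,2\pi/3)$ is the widest, of length exactly $2\pi/3$, and $|I_l|<2\pi/3$ for $l\geq 1$. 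Second, Prop.~\ref{prop:prod-coprod-pers-cup} gives $G=\max\{\cupprod(\VR(\bbS^1)),\cupprod(\VR(\bbS^2)),\cupprod(\VR(\bbS^1))\}$, and since $\bbS^1$ and $\bbS^2$ each have cup-length at most one, $G$ takes values in $\{0,1\}$.

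For the lower bound $d_{\mathrm{E}}\geq \pi/3$, I will test at the degenerate interval $[\pi/3,\pi/3]$. Since $\VR_{\pi/3}(\bbS^1)\simeq \bbS^1$ by Adamaszek--Adams, $G([\pi/3,\pi/3])=1$. For any $\varepsilon<\pi/3$, the inflated interval $[\pi/3-\varepsilon,\pi/3+\varepsilon]$ lies strictly inside $(0,2\pi/3)=I_0$, so $F([\pi/3-\varepsilon,\pi/3+\varepsilon])=2$. This violates the erosion inequality $G([\pi/3,\pi/3])\geq F([\pi/3-\varepsilon,\pi/3+\varepsilon])$, ruling out any $\varepsilon<\pi/3$ as an erosion witness.

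For the upper bound $d_{\mathrm{E}}\leq \pi/3$, I will verify both inequalities of Defn.~\ref{def:de} at $\varepsilon=\pi/3$. The direction $G([a,b])\geq F([a-\pi/3,b+\pi/3])$ is free: the closed interval $[a-\pi/3,b+\pi/3]$ has length at least $2\pi/3$, so it cannot sit strictly inside any open interval $I_l$ of length $\leq 2\pi/3$, forcing $F([a-\pi/3,b+\pi/3])=0$. For the reverse direction, the only nontrivial case is $G([a-\pi/3,b+\pi/3])=1$, and by the contravariance of $G$ in interval inclusion (Prop.~\ref{thm:Functoriality of persistent invariants}) this forces $G([x,x])\geq 1$ at both endpoints $x\in\{a-\pi/3,b+\pi/3\}$. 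But $\VR_x(\bbS^1)$ and $\VR_x(\bbS^2)$ are both contractible whenever $x\geq \pi=\diam(\bbS^i)$, so Prop.~\ref{prop:prod-coprod-pers-cup} forces $G([x,x])=0$ for $x\geq \pi$. Hence $a-\pi/3>0$ and $b+\pi/3<\pi$, which yields $[a,b]\subset(\pi/3,2\pi/3)\subset I_0$ and therefore $F([a,b])=2\geq 1$, as required.

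The main obstacle is that the paper only pins down $\cupprod(\VR(\bbS^2))$ on the interval $(0,\zeta)$, leaving $G$ partially unknown on wider intervals; the plan above is arranged precisely to avoid this gap, relying only on the universal bound $G\leq 1$ and on the vanishing of the diagonal $G([x,x])$ for $x\geq \pi$, both of which follow from Prop.~\ref{prop:prod-coprod-pers-cup} without any finer information about the intermediate homotopy types of $\VR_r(\bbS^2)$.
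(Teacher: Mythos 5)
Your overall architecture (lower bound via the degenerate interval at $\pi/3$, upper bound by checking both erosion inequalities at $\pi/3$) is sound, and the lower-bound half matches the paper's argument essentially verbatim. But there is a genuine gap in the upper bound: the claim that $G:=\cupprod(\VR(\bbS^1\vee\bbS^2\vee\bbS^1))$ ``takes values in $\{0,1\}$'' does \emph{not} follow from Prop.~\ref{prop:prod-coprod-pers-cup}, contrary to what you assert in your closing paragraph. That proposition only reduces $G$ to $\max\{\cupprod(\VR(\bbS^1)),\cupprod(\VR(\bbS^2))\}$; it says nothing about bounding $\cupprod(\VR(\bbS^2))([c,d])$ when $[c,d]\subset[\zeta,\pi)$. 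The cup-length of $\VR_r(\bbS^2)$ is a property of the complex $\VR_r(\bbS^2)$, not of $\bbS^2$ itself, and for $r\in[\zeta,\pi)$ its homotopy type is exactly the unknown territory the paper flags in Ex.~\ref{ex:T2-wedge}; a priori its image rings could have length $2$ or more. This matters precisely at the step ``the only nontrivial case is $G([a-\pi/3,b+\pi/3])=1$'': your endpoint argument then delivers $F([a,b])=2$, which beats $G$ only if you already know $G\leq 2$ there.

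The gap is reparable with the tools you are already using, and the repair is worth recording because it differs from the paper's route. In your nontrivial case the endpoint analysis forces $0<a-\pi/3$ and $b+\pi/3<\pi$, hence $a-\pi/3<\pi/3<\zeta$; by the functoriality of Prop.~\ref{thm:Functoriality of persistent invariants}, $G([a-\pi/3,b+\pi/3])\leq G([a-\pi/3,a-\pi/3])=\max\{\cupprod(\VR_{a-\pi/3}(\bbS^1)),\cupprod(\VR_{a-\pi/3}(\bbS^2))\}\leq 1$, since at that scale both complexes are homotopy equivalent to the respective spheres. With that localization your proof closes, and it is genuinely different from the paper's: the paper instead takes any $\epsilon>\pi/3$ and argues that \emph{both} inflated values $F(I^\epsilon)$ and $G(I^\epsilon)$ vanish because $I^\epsilon$ has length exceeding every bar in either barcode, so both erosion inequalities are trivially $\geq 0$. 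The paper's argument leans on a global claim about bar lengths of $\VR(\bbS^1\vee\bbS^2\vee\bbS^1)$ (in particular that the $\Hfunc^2$ bar of $\VR(\bbS^2)$ does not extend far past $\zeta$), whereas your endpoint-plus-monotonicity argument, once patched, needs only the contractibility of $\VR_r$ at $r\geq\pi$ and the known homotopy types on $(0,\zeta)$ --- arguably a cleaner set of hypotheses.
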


\begin{proof}[Proof of Prop.~\ref{prop:erosion-comp}]
\label{pf:erosion-comp}
For the simplicity of notation, we denote 
\[\Jfunc_{\times}:=\cupprod(\VR(\T^2))\text{ and }\Jfunc_{\vee}:=\cupprod(\VR(\bbS^1\vee\bbS^2\vee\bbS^1)).\] 
And for an interval $I=[a,b]$, we denote $I^{\epsilon}:=[a-\epsilon,b+\epsilon]$.

Suppose that $\Jfunc_{\times}$ and $\Jfunc_{\vee}$ are $\epsilon$-eroded, which means $\Jfunc_{\times}(I)\geq \Jfunc_{\vee}(I^\epsilon)$ and $\Jfunc_{\vee}(I)\geq \Jfunc_{\times}(I^\epsilon)$, for all $I\in\Int$. We take $I_0:=[\tfrac{\pi}{3}-\delta,\tfrac{\pi}{3}+\delta]$ for $\delta$ sufficiently small, so that the associated point of $I_0$ in the upper-diagonal half plane is very close to the point $(\tfrac{\pi}{3},\tfrac{\pi}{3})$. Then, we have 
\[\Jfunc_{\vee}(I_0)= 1\leq \Jfunc_{\times}(I_0^t)=2\text{, for any }t<\tfrac{\pi}{3}-2\delta.\]
Therefore, in order for the inequality $\Jfunc_{\vee}(I_0)\geq \Jfunc_{\times}(I_0^\epsilon)$, it must be true that $\epsilon\geq \tfrac{\pi}{3},$ implying that $d_{\mathrm{E}}(\Jfunc_{\times},\Jfunc_{\vee})\geq\tfrac{\pi}{3}.$

Next, we prove the inverse inequality $d_{\mathrm{E}}(\Jfunc_{\times},\Jfunc_{\vee})\leq\tfrac{\pi}{3}$. Fix an arbitrary $\epsilon>\tfrac{\pi}{3}$. We claim that $\Jfunc_{\vee}(I^\epsilon)=0$ for all $I\in \Int$. As before, let $\zeta:=\arccos\left(-\frac{1}{3}\right)\leq\tfrac{2\pi}{3}$. Notice that the longest possible bar in the barcode for $\VR(\bbS^1\vee\bbS^2\vee\bbS^1)$ is $(0,\pi)$, and any bar $I'$ in the barcode, except for the bar $(0,\zeta)$, is a sub-interval of $[\zeta,\pi)$. Thus, the length of $I'$ is less than or equal to $(\pi-\zeta)<\zeta<2\epsilon$. For any $I\in \Int$, because the interval $I^\epsilon$ has length larger than $2\epsilon$, it cannot be contained in a bar from the barcode. Thus, $\Jfunc_{\vee}(I^\epsilon)=0$. We can directly check that a similar claim holds for $\Jfunc_{\times}$ as well, i.e. $\Jfunc_{\times}(I^\epsilon)=0$ for all $I\in \Int$. Therefore, for any $I\in \Int$,
\[ \Jfunc_{\times}(I)\geq \Jfunc_{\vee}(I^\epsilon)=0\text{  and  }\Jfunc_{\vee}(I)\geq \Jfunc_{\times}(I^\epsilon)=0.\]
In other words, $\Jfunc_\times$ and $\Jfunc_\vee$ are $\epsilon$-eroded, for any $\epsilon>\tfrac{\pi}{3}$. Thus, $d_{\mathrm{E}}(\Jfunc_{\times},\Jfunc_{\vee})\leq\tfrac{\pi}{3}.$
\end{proof}

\begin{remark} \label{rmk:inter-torus-wedge}
Denote by $\Hfunc _*\left( \cdot\right)$ the persistent homology functor in all dimensions. Then,
\begin{itemize}
    \item $\Hfunc _*\left( \VR(\T^2) \right)\big|_{(0,\zeta)}\cong \Hfunc _*\left( \VR(\bbS^1\vee \bbS^2\vee \bbS^1)\right)\big|_{(0,\zeta)}$, and
    \item $\Hfunc _*\left( \VR(\T^2) \right)\big|_{(\pi,\infty)}= \Hfunc _*\left( \VR(\bbS^1\vee \bbS^2\vee \bbS^1)\right)\big|_{(\pi,\infty)}$ is trivial
\end{itemize}
Thus, the interleaving distance $\di$ between persistent homology of $\T^2$ and $\bbS^1\vee \bbS^2\vee \bbS^1$ in any dimension $p$ satisfies
\[ \di\left( \Hfunc _p\left( \VR(\T^2) \right), \Hfunc _p\left( \VR(\bbS^1\vee \bbS^2\vee \bbS^1)\right) \right)\leq \tfrac{\pi-\zeta}{2}<\tfrac{\pi}{3}.\]

By providing a better bound for the Gromov-Hausdorff distance than the one given by persistent homology, the persistent cup-length function demonstrates its  strength in terms of discriminating spaces and capturing additional important topological information.
\end{remark}

\end{appendices}
\end{document}